\newtheorem{Theo}{Theorem}[section]
\newtheorem{Prop}[Theo]{Proposition}
\newtheorem{Coro}[Theo]{Corollary}
\newtheorem{Lemm}[Theo]{Lemma}
\newtheorem{Defi}[Theo]{Definition}
\newtheorem{Rema}[Theo]{Remark}
\newcommand{\T}{\mathbb{T}}
\newcommand{\Bcal}{\mathcal{B}}
\newcommand{\D}{\mathbb{D}}
\newcommand{\C}{\mathbb{C}}
\def\N{\mathbb{ N}}
\def\R{\mathbb{ R}}
\begin{document}

\title{Riesz means in Hardy spaces on Dirichlet groups}

\author[Defant]{Andreas Defant}
\address[]{Andreas Defant\newline  Institut f\"{u}r Mathematik,\newline Carl von Ossietzky Universit\"at,\newline
26111 Oldenburg, Germany.
}
\email{defant@mathematik.uni-oldenburg.de}

\author[Schoolmann]{Ingo Schoolmann}
\address[]{Ingo Schoolmann\newline  Institut f\"{u}r Mathematik,\newline Carl von Ossietzky Universit\"at,\newline
26111 Oldenburg, Germany.
}
\email{ingo.schoolmann@uni-oldenburg.de}

\maketitle

\begin{abstract}
Given a frequency $\lambda=(\lambda_n)$, we study when almost all  vertical limits of a  $\mathcal{H}_1$-Dirichlet series $\sum a_n e^{-\lambda_ns}$ are Riesz-summable   almost everywhere on the imaginary axis.
Equivalently, this means to investigate almost everywhere convergence of Fourier series of $H_1$-functions  on so-called
$\lambda$-Dirichlet groups, and as our main technical tool we need to invent a  weak-type $(1, \infty)$  Hardy-Littlewood maximal operator for such groups. Applications are given
to $H_1$-functions on the infinite dimensional torus $\T^\infty$, ordinary Dirichlet series $\sum a_n n^{-s}$, as well as bounded and holomorphic functions  on the open right half plane, which are uniformly almost periodic on every vertical line.
\end{abstract}



\noindent
\renewcommand{\thefootnote}{\fnsymbol{footnote}}
\footnotetext{2018 \emph{Mathematics Subject Classification}:  Primary 43A17, Secondary 30H10, 30B50} \footnotetext{\emph{Key words and phrases}: Fourier series on groups,
general Dirichlet series, vertical limits, Riesz summability, Hardy spaces
} \footnotetext{}

\section{\bf  Introduction}

 Let  $\lambda$ be
a frequency, i.e. a strictly increasing, unbounded sequence of non-negative real numbers.
Moreover, let $G$ be a compact abelian group, and $\beta: \R \to G$ a continuous homomorphism of groups with dense range such that for each character $e^{-i\lambda_n \cdot}: \R \to \T$ there is a (then unique) character
$h_{\lambda_n}: G \to \T$ with $e^{-i\lambda_n \pmb{\cdot}} =  h_{\lambda_n} \circ\beta$.

\medskip

For
$1 \leq p \leq \infty$ denote by $H^\lambda_p(G)$ the Hardy space of all $f \in L_p(G)$ which have a Fourier
transform $\hat{f}: \hat{G} \to \T$ supported by all characters $h_{\lambda_n}$. It is known that for $1 < p < \infty$ every
$f \in H_p^\lambda(G)$ has an almost everywhere convergent Fourier series representation
$f(\omega) = \sum_{n=1}^{\infty} \widehat{f}(h_{\lambda_n}) h_{\lambda_n}(\omega)$.

\medskip

Inspired by the work \cite{HardyRiesz} of Hardy and Riesz on general Dirichlet series from 1915, we in  this article  study almost everywhere Riesz-summability of the Fourier series of functions $f \in H^\lambda_1(G)$.
The main tool is given by an appropriate weak-type $(1,\infty)$  Hardy-Littlewood
maximal operator.

\medskip
As a particular case we look at the frequency $\lambda = (\log n)$,
the infinite dimensional torus $G= \T^\infty$, and the Kronecker flow $\beta: \R \to G, t \mapsto (p_k^{it})$,
where $p_k$ denotes the $k$th prime. Our results prove that each
$f \in H_{1}(\T^{\infty})$ almost everywhere is the pointwise limit of its logarithmic Riesz means, whereas for  arithmetic Riesz  means (Ces\`{a}ro means) this in general fails.

\medskip

Most of our results  have equivalent formulations in terms of
general  Dirichlet series $\sum a_n e^{-\lambda_n s}$.
More precisely, vertical limits of Dirichlet series $\sum a_{n}e^{-\lambda_{n}s}$ which belong to the Hardy space $\mathcal{H}_{1}(\lambda)$, are summable by their first Riesz means of any order $k>0$ on the
imaginary axis $[\text{Re} =0]$ (and consequently on the right half-plane).

\medskip

Another application shows, that the   Hardy space $H_{\infty}^{\lambda}(G)$ may be identified with the Banach space of all bounded and holomorphic function on $[Re>0]$ which for every $\varepsilon>0$ are  uniformly almost periodic on $[Re>\varepsilon]$, preserving the Fourier and Bohr coefficients.

\medskip

In the following subsections of this introduction  we  substantiate all this and provide our reader with the needed preliminaries. In Section \ref{mainresults} we summarize all our results, and in Section \ref{proofs}
we prove them.

\medskip

\subsection{Hardy spaces on Dirichlet groups} \label{intro}
 Let us briefly recall the general framework of  Hardy spaces $H_p^\lambda(G)$ on so-called
 $\lambda$-Dirichlet groups $(G, \beta)$  from \cite{DefantSchoolmann1}.
\medskip

A pair $(G,\beta)$ of a compact abelian group $G$ and a homomorphism  $\beta \colon (\R,+) \to G$ is said to be a  Dirichlet group, whenever $\beta$ is continuous and has dense range. In this case, the dual map of $\beta$, that is the mapping
$\widehat{\beta} \colon \widehat{G} \hookrightarrow \widehat{\R}, ~~ \gamma \to \gamma \circ \beta$,
is injective, where $\widehat{G}$ denotes the dual group of $G$. So, using the identification $\widehat{(\R,+)}=\R$ (which we do from now on), the dual group of $G$  via $\widehat{\beta}$ can be considered as a subset of $\R$. Moreover, if $x\in \R$ lies in the image of $\widehat{\beta}$, we write $h_{x}:=(\widehat{\beta})^{-1}(x)$ and obtain
$$\widehat{G}=\left\{h_{x} \mid x \in \operatorname{Im} \widehat{\beta}\right\}.$$
In other words, for $x\in \widehat{\beta}(\widehat{G})$ the characters $t \mapsto e^{-ixt}$ on $(\R,+)$ are precisely those, which allow a unique  'extension' $h_{x}\in \widehat{G}$  such that $h_{x}\circ \beta=e^{-ix\cdot}$. Note that we do not force $\beta$ to be injective.
\medskip

Given a frequency $\lambda=(\lambda_{n})$, i.e a strictly increasing non-negative real sequence tending to $\infty$, we call the Dirichlet group $(G,\beta)$ a $\lambda$-Dirichlet group whenever  $\lambda \subset \widehat{\beta}(\widehat{G})$.  Given such a $\lambda$-Dirichlet group  $(G,\beta)$ and $1\le p \le \infty$, we define Hardy space
$$H_{p}^{\lambda}(G):=\left\{ f \in L_{p}(G) \mid \forall~ \gamma \in \widehat{G}:~ \widehat{f}(\gamma)\ne 0 \Rightarrow \gamma=h_{\lambda_{n}} ~\text{for some n} \right\},$$
which being a closed subspace of $L_p(G)$, is a Banach space. Of course, $L_p(G)$ is here formed with respect to the normalized Haar measure $m$ on $G$.
Given two $\lambda$-Dirichlet groups $(G_{1}, \beta_{1})$ and $(G_{2},\beta_{2})$, a crucial fact is that the spaces $H_{p}^{\lambda}(G_{1})$ and $H_{p}^{\lambda}(G_{2})$ are isometrically isomorphic
(see \cite[Corollary 3.21]{DefantSchoolmann1}). More precisely, there is an onto isometry
\begin{equation} \label{crucial}
T \colon H_{p}^{\lambda}(G_{1}) \to H_{p}^{\lambda}(G_{2}), ~~ f \mapsto g,
\end{equation}
which preserves the Fourier coefficients, that is for all $x$ we have
$$\widehat{f}\Big((\widehat{\beta_{1}})^{-1}(x)\Big)=\widehat{g}\Big((\widehat{\beta_{2}})^{-1}(x)\Big).$$
Let us collect a few crucial examples. The Bohr compactification  $\overline{\R}:=\widehat{(\R,d)}$
of $\R$   ($d$ the discrete topology) together with the embedding
\begin{equation*}
\beta_{\overline{\R}}\colon \R \hookrightarrow \overline{\R},\,\, x \mapsto \left[ t \mapsto e^{-ixt}\right]\,,
\end{equation*}
forms a Dirichlet group,
which obviously for any arbitrary frequency $\lambda$ serves as a $\lambda$-Dirichlet group.

\medskip

There are two basic examples which later in many more general situations  will help us to keep orientation.
Consider the frequency  $\lambda=(n)=(0,1,2,\ldots)$. Then $G:=\T$ together with $\beta_{\T}(t):=e^{-it}$ is a $(n)$-Dirichlet group, and $\mathcal{H}_{p}((n))$ equals the classical Hardy space $H_{p}(\T)$.
The second crucial example is   $\lambda=(\log n)$.  In this case, denoting by $\mathfrak{p}=(p_{n})$ the sequence of prime numbers, the infinite dimensional torus  $$\T^{\infty}:=\prod_{n=1}^{\infty} \T$$
(with its natural group structure)
together with the so-called
Kronecker flow
\begin{equation}
\label{oscar}
\beta_{\T^{\infty}}\colon \R \to \T^{\infty}, ~~ t \mapsto \mathfrak{p}^{-it}=(2^{-it},3^{-it}, 5^{-it}, \ldots),
\end{equation}
gives a  $(\log n)$-Dirichlet group. Then
$f \in H_p^{(\log n)}(\T^\infty)$ if and only if $f \in L_p(\T^\infty)$ and $\hat{f}(\alpha) = 0$ for any finite sequence
$\alpha = (\alpha_k)$ of integers with  $\alpha_k < 0$ for some $k$. In other terms,
\[
H_p(\mathbb{T}^\infty) = H_p^{(\log n)}(\T^\infty)
\]
holds isometrically, and $h_{\log n} = z^\alpha$ whenever $n=\mathfrak{p}^\alpha$.

\medskip
There is a useful reformulation of the Dirichlet group $(\T^\infty, \beta_{\T^{\infty}})$. Denote by $\Xi$ the set of
    all characters $\chi: \N \to \T$, i.e. $\chi$ is completely multiplicative in the sense that
    $\chi(nm)=\chi(n)\chi(m)$ for all $n,m$. So every character is uniquely determined by its values on the primes.
    If we  on  $\Xi$ consider
   pointwise multiplication, then
\begin{align*} \label{idy}
\iota\colon \Xi \to \T^{\infty}, ~~ \chi \mapsto \chi(\mathfrak{p}) = (\chi(p_{n}))_{n},
\end{align*}
 is a  group isomorphism which turns  $\Xi$ into a compact abelian group. The Haar measure $d \chi$ on $\Xi$  is the push forward of the normalized Lebesgue measure $dz$ on $\T^\infty$   through $\iota^{-1}$. Hence also $\Xi$ together with
\begin{equation*}
\label{oscar2}
\beta_{\Xi}\colon \R \to \Xi, ~~ t \mapsto [p_k \to \chi(p_k)],
\end{equation*}
forms a $(\log n)$-Dirichlet group.
 \medskip

 Recall from \cite[Lemma 3.11]{DefantSchoolmann1} that, given a Dirichlet group $(G, \beta)$ and $f\in L_{1}(G)$, for almost all $\omega \in G$  there are locally Lebesgue integrable functions $f_{\omega} \colon \R \to \C$ such that $f_{\omega}(t)=f(\omega \beta(t))$ almost everywhere on $\R$.    As we will see later, this way to 'restrict' functions on the group $G$ to $\R$, establishes a sort of  bridge
between  Fourier analysis on a $\lambda$-Dirichlet group $G$ and  Fourier analysis on $\R$.
\medskip

In this context, the classical Poisson kernel  $P_{u}(t)=\frac{1}{\pi}\frac{u}{u^{2}+t^{2}}\colon \R \to \R$, where $u>0$, plays a crucial role. Since $P_u \in L_{1}(\R)$ has norm $1$, its push forward under $\beta$
leads to a  regular Borel probability measure $p_u$ on  $G$. We call $p_u$ the Poisson measure on $G$,
and note that $\widehat{p_{u}}(h_{x})=e^{-u|x|}$ for all $x\in \widehat{\beta}(\widehat{G})$.

\medskip

\subsection{The reflexive case -- functions}
Given a compact and abelian group $G$ with Haar measure $m$ and a class $\mathcal{F}$ of functions in $ L_p(G), 1 \leq p \leq \infty$, one of the fundamental questions
of Fourier analysis certainly is to ask for necessary and sufficient conditions on $\mathcal{F}$ under
which the Fourier series $\sum_{\gamma \in \widehat{G}} \widehat{f}(\gamma) \gamma$ of each $f \in \mathcal{F}$ approximates $f$
in a reasonable way -- e.g.  almost everywhere pointwise or in the $L_p$-norm,
and with respect to a reasonable summation method like ordinary  or Ces\'aro summation.

\medskip

In the following we will carefully distinguish the reflexive case $1 < p < \infty$ from the
non-reflexive cases $p=1$ and $\infty$.

  \begin{Theo} \label{Duy}
  Let  $(G, \beta)$ be a $\lambda$-Dirichlet group and $1<p<\infty$. There is a constant $\text{CH}_p > 0$ such that
    for every $f \in H_{p}^\lambda(G)$ we have
\begin{equation*}
\Big( \int_{G} \sup_{x} \Big| \sum_{\lambda_n < x} \widehat{f}(h_{\lambda_n}) h_{\lambda_n}(\omega)  \Big|^{p} d\omega \Big)^{\frac{1}{p}} \le \text{CH}_p \,\|f\|_{p}.
\end{equation*}
  In particular, $\sum \widehat{f}(h_{\lambda_n}) h_{\lambda_n}$ approximates $f$ almost everywhere pointwise
  and in the $H_{p}$-norm.
   \end{Theo}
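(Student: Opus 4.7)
My plan is to deduce the maximal estimate by transferring the Carleson--Hunt theorem from $\R$ to the $\lambda$-Dirichlet group $G$ via the flow $\beta$, and then to derive the a.e.\ pointwise and $H_p$-norm convergence from this maximal bound by a standard density argument.

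First, by the isometric isomorphism \eqref{crucial}, both sides of the asserted inequality depend only on $\lambda$ and $p$ and not on the concrete $\lambda$-Dirichlet group. Hence I may replace $(G,\beta)$ by any $\lambda$-Dirichlet group convenient for the argument, e.g.\ the Bohr compactification $(\overline{\R},\beta_{\overline{\R}})$ in which every real frequency is available. Given $f\in H_p^\lambda(G)$, I apply \cite[Lemma 3.11]{DefantSchoolmann1} to form the vertical restrictions $f_\omega(t)=f(\omega\beta(t))$ for a.e.\ $\omega\in G$. These are locally $p$-integrable functions on $\R$ whose formal Fourier spectrum lies in $\{\lambda_n\}$ with coefficients $\widehat{f}(h_{\lambda_n})h_{\lambda_n}(\omega)$.

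Consequently, if $S_x g(t):=\tfrac{1}{2\pi}\int_{-\infty}^x\widehat{g}(\xi)e^{i\xi t}\,d\xi$ denotes the partial Fourier integral on $\R$, then formally $S_xf(\omega\beta(t))=S_xf_\omega(t)$. I then invoke the Carleson--Hunt--Sj\"olin maximal theorem, which for $1<p<\infty$ furnishes a constant $C_p$ with $\|\sup_x|S_xg|\|_{L_p(\R)}\le C_p\|g\|_{L_p(\R)}$, and combine it with a Coifman--Weiss-type transference argument: apply the $\R$-bound to $\phi_T\cdot f_\omega$ for a Fej\'er-type cutoff $\phi_T$ supported on $[0,T]$, raise to the $p$-th power, integrate in $\omega$ and $t$, and exploit Fubini together with the invariance of the Haar measure $m$ under the translations $\beta(t)$. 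Letting $T\to\infty$, the mean ergodic behaviour along $\beta$ transfers $\tfrac{1}{T}\|f_\omega\|_{L_p[0,T]}^p$ into $\|f\|_{L_p(G)}^p$ and delivers the constant $\text{CH}_p$.

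The principal obstacle is the maximal transference step: the Coifman--Weiss principle is usually formulated for Fourier multipliers, and its maximal counterpart requires careful control of the boundary error introduced by the cutoff $\phi_T$, since partial Fourier integrals are highly non-local and tails decay only like $1/t$; one must verify that this error becomes negligible uniformly in the truncation parameter $x$ so that the constant $C_p$ survives the limit. A secondary technical issue is checking that the spectral restriction on $f$ correctly transfers to $f_\omega$, ensuring that only the frequencies $\lambda_n<x$ contribute to $S_xf_\omega$; this is precisely what the hypothesis $f\in H_p^\lambda(G)$ provides. Once the maximal estimate is in hand, almost everywhere pointwise convergence and $H_p$-norm convergence both follow by approximating $f$ with trigonometric polynomials in the characters $h_{\lambda_n}$, which are dense in $H_p^\lambda(G)$, and invoking the classical Banach principle.
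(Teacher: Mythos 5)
The paper does not actually prove Theorem~\ref{Duy}: it is quoted from the literature, credited to Hedenmalm--Saksman (for $p=2$ on $\T^\infty$, via Fefferman's method) and to Duy for the general case, with the reformulation for arbitrary $\lambda$-Dirichlet groups deferred to \cite{DefantSchoolmann2}. Your strategy --- restrict $f$ along the flow to obtain $f_\omega$ on $\R$, apply the Carleson--Hunt--Sj\"olin maximal bound for partial Fourier integrals, and transfer back using invariance of the Haar measure under $\beta(t)$ --- is precisely the strategy of those cited proofs, and the concluding density/Banach-principle argument for the ``in particular'' part is standard and correct.

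However, as a self-contained proof your proposal has a genuine gap exactly where you flag ``the principal obstacle.'' The identity $S_x f(\omega\beta(t))=S_x f_\omega(t)$ is only formal: $f_\omega$ is not in $L_p(\R)$ (it is merely locally $p$-integrable with Besicovitch-type almost periodic structure), so $S_x f_\omega$ is not literally a partial Fourier integral of an $L_p(\R)$ function, and after multiplying by the cutoff $\phi_T$ the spectrum of $\phi_T f_\omega$ is smeared by convolution with $\widehat{\phi_T}$, so $S_x(\phi_T f_\omega)$ differs from $\phi_T\cdot\sum_{\lambda_n<x}\widehat f(h_{\lambda_n})h_{\lambda_n}(\omega)e^{-i\lambda_n t}$ by a boundary term. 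Controlling that term \emph{uniformly in $x$} (the truncation parameter of the maximal operator) is the entire technical content of the maximal transference principle; the Coifman--Weiss argument for a single multiplier does not automatically upgrade to the maximal operator, because the errors from the slowly decaying ($1/t$) tails of the Dirichlet kernel must be summed over an uncountable family of $x$ before taking the supremum. Writing ``one must verify that this error becomes negligible'' names the difficulty without resolving it, so the estimate $\mathrm{CH}_p$ is not actually established by the argument as written. To close the gap you would either have to carry out the quantitative error analysis (as in Fefferman's and Duy's papers) or invoke an off-the-shelf maximal transference theorem and check its hypotheses for the flow $\beta$.
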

  For $\lambda = (n)$ and the Dirichlet group $(\T, \beta_\T)$ this is the celebrated Carleson-Hunt theorem,
  and the constant $\text{CH}_p$ mentioned above in fact equals the one from the maximal inequality in
  the CH-theorem for one variable. Based on this one variable case and a method from \cite{Fefferman}, Hedenmalm-Saksman in
  \cite[Theorem 1.5]{HedenmalmSaksman} extend the CH-theorem  to functions $f \in H_2(\T^\infty)$, which in the preceding theorem is reflected by the  $(\log n)$-Dirichlet group
  $(\T^\infty, \beta_{\T^\infty})$. The general case given above has to be  credited to Duy from \cite{Duy}; for our  reformulation within the  setting of arbitrary $\lambda$-Dirichlet groups we refer to \cite{DefantSchoolmann2}.

  \medskip

The CH-theorem in one variable fails for $p=1$, so clearly the preceding extension fails in this case.
On the other hand, it is well-known that every function $f\in H_{1}(\T)$
 almost everywhere equals the pointwise limit  of its  Ces\`{a}ro means (see e.g. \cite[Theorem 3.4.4, p.207]{Grafakos1}), i.e.
\begin{equation} \label{cesarointro}
f(z) =\lim_{N\to \infty}  \frac{1}{N}  \sum_{k=0}^{N-1} \sum_{n \le k} \widehat{f}(n)z^{n}=\lim_{N\to \infty} \sum_{n=0}^{N-1} \widehat{f}(n) \Big(1-\frac{n}{N}\Big) z^{n}
\end{equation}
for almost all $z \in \T$. Moreover, this is also true  if the limits are  taken with respect to the $H_1$-norm.
So it
  seems natural to consider for a given $f\in H_{1}(\T^{\infty})$ the Ces\`{a}ro means of the partial
  sums $$ \sum_{\alpha: 1\le \mathfrak{p}^{\alpha}\le k} \widehat{f}(\alpha) z^{\alpha}, \,\,k \in \N\,,$$
   and to ask whether  almost everywhere  pointwise and/or in the $H_1$-norm
 \begin{equation} \label{cesarologn}
f(z)=\lim_{N\to \infty}  \frac{1}{N}  \sum_{k=0}^{N-1} \sum_{\mathfrak{p^\alpha} \le k} \widehat{f}(\alpha) z^{\alpha} \,\,?
\end{equation}
We will later see  that this is in general false -- but true, if we change Ces\`{a}ro summation by more adapted   summation methods
invented in \cite{HardyRiesz} by  Hardy and M. Riesz  within the setting of general Dirichlet series.

\medskip

\subsection{Riesz means -- functions} \label{RieszMeans}
The following definitions are inspired by  \cite{HardyRiesz}. Let $\lambda$ be a frequency, $k \ge 0$, and
$\sum c_n$ a series in a Banach space $X$. Then we call the series $\sum c_n$  $(\lambda,k)$-Riesz summable if
the limit
\[
\lim_{x \to \infty} \sum_{\lambda_n < x} \Big(1- \frac{\lambda_n}{x} \Big)^k c_n
\]
exists, and we call the finite sums
\[
R_x^{\lambda, k} \Big( \sum c_n \Big) := \sum_{\lambda_n < x} \Big(1- \frac{\lambda_n}{x} \Big)^k c_n
\]
first $(\lambda,k)$-Riesz means of $\sum c_n $ of length $x > 0$.
\medskip

 Hardy and Riesz in \cite{HardyRiesz}
isolated the following fundamental properties of Riesz summability; the results are (in the order of the proposition) taken from  \cite[Theorem 16, p. 29, Theorem 17, p. 30, and Theorem 21, p. 36]{HardyRiesz}.

\begin{Prop} \label{MarcosgeburtstagX}
Let $\lambda$ be a frequency, $k \ge 0$, and $\sum c_n$ a series in a Banach space $X$.
\begin{itemize}
\item[(1)]
 First theorem of consistency: If $\sum c_n$  is $(\lambda,k)$-Riesz summable, then it is
  $(\lambda, \ell)$-Riesz summable for any $k \leq \ell$, and the associated limits coincide.
  In particular, if $\sum c_n$ is summable (i.e. the series converges),  then for all $k>0$
  \[
  \sum_{n=1}^\infty c_n =
  \lim_{x \to \infty} \sum_{\lambda_n < x} \Big(1- \frac{\lambda_n}{x} \Big)^k c_n.
  \]
  \item[(2)]
Second theorem of consistency:
 If $\sum c_n$  is $(e^\lambda,k)$-Riesz summable, then
  $\sum c_n$  is $(\lambda,k)$-Riesz summable,  and the associated limits coincide.

  \medskip
    \item[(3)]
If $\sum c_{n}$ is $(\lambda, k)$-summable summable with limit $C$, then
\begin{equation*} \label{fromfirsttopartialsum}
\lim_{N\to \infty} \Big(\frac{\lambda_{N+1}-\lambda_{N}}{\lambda_{N+1}}\Big)^{k} \Big(\sum_{n=1}^{N}c_n-C\Big)=0\,.
\end{equation*}
\end{itemize}
\end{Prop}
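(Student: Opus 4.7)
The plan is to establish each of the three assertions using classical integral representations for Riesz means, following the arguments of Hardy and Riesz, adapted to the Banach-space setting.

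For (1), the key ingredient is the Beta-type identity
\[
(x-\lambda_n)^{\ell} = \frac{\Gamma(\ell+1)}{\Gamma(k+1)\Gamma(\ell-k)} \int_{\lambda_n}^{x}(x-u)^{\ell-k-1}(u-\lambda_n)^{k}\,du, \qquad \ell > k \geq 0,
\]
which is verified by the substitution $u = \lambda_n + (x-\lambda_n)t$. Multiplying by $c_n$, summing over the finitely many $n$ with $\lambda_n < x$, swapping sum and integral, and rescaling $u = xt$ yields
\[
R_{x}^{\lambda,\ell}\Big(\sum c_n\Big) = \frac{\Gamma(\ell+1)}{\Gamma(k+1)\Gamma(\ell-k)} \int_{0}^{1}(1-t)^{\ell-k-1}t^{k}\, R_{xt}^{\lambda,k}\Big(\sum c_n\Big)\,dt.
\]
The map $u \mapsto R_{u}^{\lambda,k}(\sum c_n)$ vanishes on $[0,\lambda_1]$ and converges in $X$, hence is bounded, so Lebesgue's dominated convergence theorem combined with the Beta-integral evaluation $\int_{0}^{1}(1-t)^{\ell-k-1}t^{k}\,dt = \Gamma(k+1)\Gamma(\ell-k)/\Gamma(\ell+1)$ forces $R_{x}^{\lambda,\ell} \to C$. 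For the in-particular clause, one takes $k=0$: the definition reduces to $R_{x}^{\lambda,0}(\sum c_n) = \sum_{\lambda_n < x} c_n$, which converges in $X$ to $\sum_n c_n$ exactly when the series converges.

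For (3), I first handle $k = 1$, where a direct expansion of both sums gives the identity
\[
(\lambda_{N+1}-\lambda_{N}) S_N = \lambda_{N+1} R_{\lambda_{N+1}}^{\lambda,1}\Big(\sum c_n\Big) - \lambda_{N} R_{\lambda_{N}}^{\lambda,1}\Big(\sum c_n\Big).
\]
Dividing by $\lambda_{N+1}$ and writing $\varepsilon_N := (\lambda_{N+1}-\lambda_N)/\lambda_{N+1}$, this rearranges to
\[
\varepsilon_N (S_N - C) = \big(R_{\lambda_{N+1}}^{\lambda,1} - C\big) - (1-\varepsilon_N)\big(R_{\lambda_{N}}^{\lambda,1} - C\big),
\]
whose right-hand side tends to $0$ by hypothesis since $0 \leq 1-\varepsilon_N \leq 1$. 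For a general $k > 0$ I would exploit that $u \mapsto u^{k}R_{u}^{\lambda,k}(\sum c_n) = \sum_{\lambda_n < u}(u-\lambda_n)^{k}c_n$ is piecewise $C^{1}$ with derivative $ku^{k-1}R_{u}^{\lambda,k-1}(\sum c_n)$, so that an integration by parts against the partial-sum step function $u \mapsto \sum_{\lambda_n < u}c_n$ produces an identity expressing $(\lambda_{N+1}-\lambda_{N})^{k}(S_N - C)$ in terms of $\lambda_{N+1}^{k}(R^{\lambda,k}_{\lambda_{N+1}}-C)$ plus correction terms which can be controlled through part (1).

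The genuinely delicate assertion is (2). Here I would follow the original Hardy--Riesz argument: introduce the exponential change of variable $x = e^{y}$ in the $(e^{\lambda},k)$-means, so that they become $\sum_{\lambda_n < y}(1-e^{\lambda_n-y})^{k}c_n$, and then realize the smaller $(\lambda,k)$-weight $(1-\lambda_n/y)^{k} = (1 - e^{-\log(y/\lambda_n)})^{k}$ as a weighted integral average of the larger $(e^{\lambda},k)$-weights $(1-e^{\lambda_n-y'})^{k}$ over $y' \leq y$, with an explicit kernel obtained by a Beta-function manipulation in the spirit of (1). The main obstacle is that near the cutoff $\lambda_n \approx y$ the two weights behave very differently: the $(e^{\lambda},k)$-weight is close to $1$ while the $(\lambda,k)$-weight is of order $\varepsilon_N^{k}$, so the comparison kernel must place its mass away from $y' = y$. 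Once this kernel is produced, a dominated-convergence argument analogous to that of (1) converts $(e^{\lambda},k)$-summability into $(\lambda,k)$-summability with the same limit.
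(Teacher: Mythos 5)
The paper itself offers no proof of this proposition --- it is quoted from Hardy and Riesz \cite{HardyRiesz} (their Theorems 16, 17 and 21), so your argument has to stand entirely on its own. Part (1) does stand: the Beta-type identity is correct, it yields the representation of $R_x^{\lambda,\ell}$ as the average $\frac{\Gamma(\ell+1)}{\Gamma(k+1)\Gamma(\ell-k)}\int_0^1(1-t)^{\ell-k-1}t^k R_{xt}^{\lambda,k}\,dt$ of lower-order means, the integrand is bounded because a convergent $X$-valued family of Riesz means is bounded, and dominated convergence for the Bochner integral (the integrand takes values in the separable closed span of the $c_n$) finishes the proof; the in-particular clause is indeed the case $k=0$. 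Your $k=1$ computation in (3) is likewise a complete and correct proof of that special case.

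The gaps are in (2) and in (3) for general $k$. For (3), the proposed integration by parts is circular: from $x^kR_x^{\lambda,k}=k\int_0^x\big(\sum_{\lambda_n<t}c_n\big)(x-t)^{k-1}\,dt$ evaluated at $x=\lambda_{N+1}$ one gets (taking $C=0$)
\[
\Big(\frac{\lambda_{N+1}-\lambda_N}{\lambda_{N+1}}\Big)^{k}\sum_{n=1}^{N}c_n \;=\; R_{\lambda_{N+1}}^{\lambda,k}\;-\;k\,\lambda_{N+1}^{-k}\int_0^{\lambda_N}\Big(\sum_{\lambda_n<t}c_n\Big)(\lambda_{N+1}-t)^{k-1}\,dt,
\]
and the remaining integral equals $\lambda_{N+1}^{k}R^{\lambda,k}_{\lambda_{N+1}}-(\lambda_{N+1}-\lambda_N)^k\sum_{n\le N}c_n$ again, so nothing has been gained; moreover part (1) cannot control the correction, since it only produces means of \emph{higher} order, which yield the weaker factor $\big((\lambda_{N+1}-\lambda_N)/\lambda_{N+1}\big)^{\ell}$ with $\ell\ge k$. (Your $k=1$ argument succeeds for a special reason --- the first difference of the linear weights $(x-\lambda_n)$ between the two points $\lambda_N,\lambda_{N+1}$ is independent of $n$ --- and this does not propagate to non-integer $k$.) For (2), you have described the desired shape of a proof but not supplied its content: the entire difficulty of the second theorem of consistency is the construction of the comparison kernel, the proof that its total mass is uniformly bounded, and the localisation away from the cutoff region you correctly identify as problematic. ``Once this kernel is produced'' is precisely the theorem; Hardy and Riesz need several pages of kernel estimates to produce it. As written, (2) and the general-$k$ case of (3) remain unproved.
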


Note that,
 if $\sum c_n$  in view of Proposition \ref{MarcosgeburtstagX}, (2) is not only
 $(\lambda,k)$-Riesz summable but even $(e^\lambda,k)$-Riesz summable, then
\begin{equation} \label{turmalet}
 \lim_{x \to \infty}
R_{x}^{e^\lambda,k}\big(\sum c_n\big)
=
\lim_{x \to \infty}
R_{e^x}^{e^\lambda,k}\big(\sum c_n\big)
=
\lim_{x \to \infty} \sum_{\lambda_n < x} \Big(1- \frac{e^{\lambda_n}}{e^x} \Big)^k
 c_n \,;
\end{equation}
we refer to  the finite sums
\[
S_x^{\lambda, k} \Big( \sum c_n \Big) :=
\sum_{\lambda_n < x} \Big(1- \frac{e^{\lambda_n}}{e^x} \Big)^k c_n
\]
as the second  $(\lambda,k)$-Riesz means of $\sum c_n $ of length $x > 0$.

\medskip
Take now some $\lambda$-Dirichlet group $(G, \beta)$ and $f\in H_{1}^{\lambda}(G)$. Then we call
the Fourier series of $f$ $(\lambda, k)$-Riesz summable in $\omega \in G$ if it is
$(\lambda, k)$-Riesz summable in $\omega \in G$, in other terms the limit
\[
\lim_{x \to \infty} \sum_{\lambda_n < x} \hat{f}(h_{\lambda_n})\Big(1- \frac{\lambda_n}{x} \Big)^k
 h_{\lambda_n}(\omega)
\]
exists. It is then needless to say what is meant by the phrase 'the Fourier series of $f$ is $(\lambda, k)$-Riesz summable in the $H_p$-norm'. Moreover, the polynomial
\[
R_x^{\lambda,k}(f) := \sum_{\lambda_n < x} \hat{f}(h_{\lambda_n})\Big(1- \frac{\lambda_n}{x} \Big)^k
 h_{\lambda_n}
\]
is the so-called first $(\lambda, k)$-Riesz mean of $f$ of length $x >0$ , and
\[
S_x^{\lambda,k}(f) :=  \sum_{\lambda_n < x} \hat{f}(h_{\lambda_n})\Big(1- \frac{e^{\lambda_n}}{e^x} \Big)^k\,,
\]
the second $(\lambda, k)$-Riesz mean of $f$ of length $x >0$. Observe that if
the Fourier series of $f$ is $(e^\lambda, k)$-Riesz summable in $\omega \in G$, then as in \eqref{turmalet}
\begin{equation} \label{turmalet2}
\lim_{x \to \infty} R_x^{e^\lambda,k}(f)(\omega) = \lim_{x \to \infty} R_{e^x}^{e^\lambda,k}(f)(\omega)=\lim_{x \to \infty} S_x^{\lambda,k}(f)(\omega).
\end{equation}

\medskip

Let us again for a moment  concentrate on the two in a sense extrem frequencies  $\lambda = (n)$ and $\lambda = (\log n)$.

\medskip

As mentioned  $\lambda=(n)$  together with  $(\T, \beta_{\T})$
forms a $\lambda$-Dirichlet group. Then the
 $(\lambda, 1)$-Riesz mean of
  $f \in H_{1}(\T) = \mathcal{H}_{1}^{\lambda}(\T)$ of
   length $x$ equals
\begin{equation} \label{france1}
R_{x}^{\lambda,1}(f)=\sum_{n<x} \widehat{f}(n) \Big(1-\frac{n}{x}\Big) z^{n},
\end{equation}
which for $x=N \in \N$ is nothing else than the Ces\`{a}ro mean of the $N$th partial sum of the Fourier series of $f$ considered in  \eqref{cesarointro}.

\medskip
In this sense Riesz means generalize the Ces\`{a}ro means for  functions  on $\T$ to the much wider setting of functions on Dirichlet groups.

\medskip

 Let us also consider $\lambda = (\log n)$ and the $\lambda$-Dirichlet group
$(\T^\infty, \beta_{\T^\infty})$.
For  $f \in H_{1}(\T^\infty) = H_{1}^{\lambda}(\T^\infty)$ we refer to the first $(\lambda,k)$-Riesz means of $f$, that are
\begin{equation} \label{france2}
R_{x}^{\lambda,k}(f)=\sum_{\log \mathfrak{p}^\alpha<x} \widehat{f}(\alpha)
\Big( 1- \frac{\log \mathfrak{p}^\alpha}{x}\Big)^{k}z^\alpha,~ ~x>0,
\end{equation}
as the logarithmic means of $f$.
Observe also, that in this case
\begin{align} \label{france3}
R_{x}^{e^\lambda,k}(f)(z)=\sum_{\mathfrak{p}^{\alpha}<x} \widehat{f}(\alpha) \Big(1-\frac{\mathfrak{p}^{\alpha}}{x}\Big)^{k} z^{\alpha}\,,
\end{align}
and hence for $N \in \N$ and $k=1$
\begin{align} \label{france3}
R_{N}^{e^\lambda,1}(f)(z)
= \sum_{\mathfrak{p^\alpha} < N} \hat{f}(\alpha) \big(1-\frac{\mathfrak{p}^\alpha}{N}\big)  z^\alpha
=
\frac{1}{N}  \sum_{k=0}^{N-1} \sum_{\mathfrak{p^\alpha} \leq k} \hat{f}(\alpha) z^\alpha.
\end{align}

\begin{Rema} \label{alaphilipX}
Let  $f \in H^\lambda_1(G)$,  $\omega \in G$, and $k \ge 0$.
 Then  for $\lambda = (n)$ the following are  equivalent:
\begin{itemize}
\item[(1)]
The Fourier series of $f$ converges at $\omega$.
\item[(2)]
The Fourier series of $f$ is $(e^\lambda, k)$-Riesz summable at $\omega$.
\end{itemize}
In this case the limits coincide,
and a similar result holds true, whenever we replace convergence in $\omega$ by  convergence with respect to the
$H_p$-norm, $1 \leq p \leq \infty$.
\end{Rema}

\begin{proof}  Part (3) of Proposition~\ref{MarcosgeburtstagX} proves the implication (2) $\Rightarrow$ (1), and the reversed direction follows from Proposition~\ref{MarcosgeburtstagX} (1).
\end{proof}

So for the frequency $\lambda = (n)$, Riesz summability by second means seems not to be particularly interesting.

\medskip

After all this, let  us finally indicate the main challenge of this article:
 For which frequencies $\lambda$ and which $\lambda$-Dirichlet groups $(G, \beta)$ do we for all $f\in H_{1}^{\lambda}(G)$ have
\begin{align} \label{mainQ}
f = \lim_{x \to \infty} R_{x}^{\lambda,k}(f)
\end{align}
 almost everywhere on $G$ and/or in the $H_1$-norm?

\medskip

\subsection{Hardy spaces of Dirichlet series}
Given a frequency $\lambda = (\lambda_n)$,
a $\lambda$-Dirichlet series  is a (formal) sum of the form $D=\sum a_{n}e^{-\lambda_{n}s}$, where $s$ is a complex variable and the sequence $(a_{n})$ form the so-called  Dirichlet coefficients of $D$.
Finite sums of the form $D=\sum_{n=1}^{N} a_{n}e^{-\lambda_{n}s}$ we call Dirichlet polynomials.
  By $\mathcal{D}(\lambda)$ we denote the space of all $\lambda$-Dirichlet series. It is well-known that
  if $D=\sum a_{n}e^{-\lambda_{n}s}$ converges in $s_0 \in \C$, then it also converges for all $s\in \C$
  with $Re s > Re s_0$, and its limit function $f(s) = \sum_{n=1}^{\infty} a_{n}e^{-\lambda_{n}s}$
  defines a holomorphic function on $[Re > \sigma_c(D)]$, where
  \[
  \sigma_{c}(D)=\inf\left \{ \sigma \in \R \mid D \text{ converges on } [Re>\sigma] \right\}
  \]
determines the so-called abscissa of convergence.
\medskip

In \cite{DefantSchoolmann1} we introduce an $\mathcal{H}_{p}$-theory of general Dirichlet series extending Bayart's $\mathcal{H}_{p}$-theory of ordinary Dirichlet series $\sum a_{n}n^{-s}$ (where $\lambda=(\log n)$) from \cite{Bayart} (see also e.g. \cite{Defant}, \cite{Helson}, and \cite{QQ} for more information on the 'ordinary' case). Let us briefly recall the general framework from \cite{DefantSchoolmann1}, which in particular shows, that there are several ways to produce Dirichlet series.
\medskip

Fixing a $\lambda$-Dirichlet group $(G,\beta)$, we define $\mathcal{H}_{p}(\lambda)$, where $1\le p \le \infty$, to be the space of all (formal) $\lambda$-Dirichlet series $D=\sum a_{n}e^{-\lambda_{n}s}$ for which there is $f \in H_{p}^{\lambda}(G)$ such that that  $a_{n}=\widehat{f}(h_{\lambda_{n}})$ for all $n\in \N$. Endowed with the  norm $\|D\|_{p}:=\|f\|_{p}$, we obtain a Banach space.

\medskip

Note that by \eqref{crucial}, the definition of $\mathcal{H}_{p}(\lambda)$ is independent of the chosen $\lambda$-Dirichlet group, and we by definition obtain the onto isometry
\begin{equation} \label{Bohrmap}
\mathcal{B} \colon H_{p}^{\lambda}(G) \to \mathcal{H}_{p}(\lambda), ~~ f \mapsto \sum \widehat{f}(h_{\lambda_{n}}) e^{-\lambda_{n}s}\,;
\end{equation}
for  historical reasons we  call this mapping Bohr transform.
From \cite[Theorem 3.26]{DefantSchoolmann1} recall the following  internal description of $\mathcal{H}_{p}(\lambda)$: Since $D=\sum_{n=1}^{N}a_{n}e^{-\lambda_{n}s}$, considered as a function on the imaginary line, defines an almost periodic function, the limit
\begin{equation}\label{internal}
\|D\|_{p}:=\lim_{T\to \infty} \Big(\frac{1}{2T} \int_{-T}^{T} \Big| \sum_{n=1}^{N}a_{n}e^{-\lambda_{n}it}\Big|^{p} dt \Big)^{\frac{1}{p}}
\end{equation}
exists and defines a norm on the space $Pol(\lambda)$ of all $\lambda$-Dirichlet polynomials. Then $\mathcal{H}_{p}(\lambda)$ is the completion of $\big(Pol(\lambda),\|\cdot\|_{p}\big)$.

\medskip

\subsection{Transference}
We  want  to understand, how  (pointwise) summability properties of the Fourier series $\sum \widehat{f}(h_{\lambda_{n}}) h_{\lambda_{n}}$
  of functions $f \in H_{1}^{\lambda}(G)$ transfer to  summability properties of their associated Dirichlet series $D:=\mathcal{B}(f)$, and vice versa. Slightly  more precise, but still vague,
we try to figure out how  summation of these Fourier series by first or second Riesz means influences the
 convergence properties
of  so-called vertical limits of $D$.

\medskip

Given a $\lambda$-Dirichlet series $D = \sum a_n e^{-\lambda_n s}$ and $z \in \mathbb{C}$, we call
the Dirichlet series $$D_{z}:=\sum a_n e^{-\lambda_n z} e^{-\lambda_n s} $$ the   translation of $D$ about $z$, and we distinguish between horizontal translations $D_u, u \in \R$,  and vertical translations
$D_{i\tau}, \tau \in \R$.

\medskip

 If $(G, \beta)$ is a $\lambda$-Dirichlet group and $D \in \mathcal{H}_{p}(\lambda)$ is associated to
 $f\in H_p^\lambda(G)$, then for each $u>0$
 the  horizontal  translation $D_u$ corresponds to the convolution of $f$ with the Poisson measure $p_{u}$, i.e.
 $\Bcal(f*p_{u})=D_{u}$ (compare coefficients). In particular,  we have that $D_{u}\in \mathcal{H}_{p}(\lambda)$.

\medskip

Moreover, each Dirichlet series of the form $$D^{\omega}=\sum a_{n} h_{\lambda_{n}}(\omega)e^{-\lambda_n s}\,,\,\, \omega \in G$$
is said to be a vertical limit of $D$. Examples are vertical translations
$D_{i\tau}$ with $\tau \in \mathbb{R}$,
and the terminology is explained by the fact that each vertical limit  may be approximated by  vertical translates. More precisely, given $D =  \sum a_n e^{-\lambda_n s}$ which converges absolutely on the right half-plane, for every  $\omega \in G$ there is a sequence $(\tau_{k})_{k} \subset \R$ such that $D_{i\tau_{k}}$ converges to $D^{\omega}$ uniformly on $[Re>\varepsilon]$ for all $\varepsilon>0$.
 Assume conversely that for  $(\tau_{k})_{k} \subset \R$ the  vertical translations $D_{i\tau_k}$ converge
 uniformly on $[Re>\varepsilon]$ for every  $\varepsilon>0$
 to a holomorphic function $f$ on $[Re>0]$. Then there is $\omega \in G$ such that
    $f(s)= \sum_{n=1}^\infty a_n \omega(h_{\lambda_n}) e^{-\lambda_n s}$
    for all $s \in [Re>0]$\,. For all this see \cite[Proposition 4.6]{DefantSchoolmann1}.

    \medskip

The following lemma (to be proved in Section \ref{mainproof2}) is our 'bridge' comparing
almost everywhere Riesz-summability  of the Fourier series of a function $f\in H_1^\lambda(G)$
with the convergence of almost all  vertical limits $D^\omega$ of its associated Dirichlet series
$D= \mathfrak{B}(f)$ almost everywhere on the  imaginary axis.

\begin{Lemm} \label{hedesaks} Let $(G,\beta)$ be a $\lambda$-Dirichlet group, and $f_n, f \in H_1^{\lambda}(G)$. Then the following are equivalent:
\begin{itemize}
\item[(1)]
$\lim_{n\to \infty} f_n(\omega) = f(\omega)$ \,\,\, \text{for almost all $\omega \in G$}
\vspace{1mm}
\item[(2)]
$\lim_{n\to \infty} (f_n)_\omega(t)= f_\omega(t)$ \,\,\,
\text{for almost all $\omega \in G$ and for  almost all $t\in \R$.}
\end{itemize}
In particular, if all $f_n$ are polynomials and $D_n\in \mathcal{H}_1(\lambda)$
 are the Dirichlet polynomials  associated  to $f_n$ under the Bohr transform, then $(1)$  and $(2)$ are equivalent to each of the
 following two further statements:
\begin{itemize}
\item[(3)]
$\lim_{n\to \infty} D^\omega_n(0)= f(\omega)$ \,\,\,
\text{for almost all $\omega \in G$}
\vspace{1mm}
\item[(4)]
$\lim_{n\to \infty} D^\omega_n(it)= f_\omega(t)$ \,\,\,
\text{for almost all $\omega \in G$ and for  almost all $t\in \R$}\,.
\end{itemize}
\end{Lemm}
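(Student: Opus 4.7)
The plan is to exploit two ingredients: the translation invariance of the Haar measure $m$ on $G$, and the identification from \cite[Lemma 3.11]{DefantSchoolmann1}, namely that for each $g \in L_1(G)$ there is a conull set $G_g \subset G$ such that $g_\omega(t) = g(\omega\beta(t))$ holds for almost every $t \in \R$ whenever $\omega \in G_g$. The preliminary step is to intersect the countably many conull sets $G_f \cap \bigcap_n G_{f_n}$, yielding a conull set on which this identification holds \emph{simultaneously} for $f$ and all $f_n$; on this set I may freely replace $(f_n)_\omega(t)$ by $f_n(\omega\beta(t))$ and $f_\omega(t)$ by $f(\omega\beta(t))$ when working with a.e.\ $t$.

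For the implication $(1) \Rightarrow (2)$ I would start from the null set $E := \{\omega \in G : f_n(\omega) \not\to f(\omega)\}$. Since for each fixed $t$ the map $\omega \mapsto \omega\beta(t)$ preserves $m$, the translate $\beta(-t)E$ is null for every $t$. Applying Fubini on $G \times K$ for an arbitrary compact $K \subset \R$ (a $\sigma$-finite product), the set $\{(\omega,t) : \omega\beta(t) \in E\}$ has zero product measure, so for almost every $\omega \in G$ the corresponding $t$-slice is Lebesgue-null. Combining with the simultaneous identification yields $(f_n)_\omega(t) \to f_\omega(t)$ for a.e.\ $t$, for almost every $\omega$, which is $(2)$.

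For $(2) \Rightarrow (1)$ I would run the argument in reverse: condition $(2)$ together with Fubini implies $f_n(\omega\beta(t)) \to f(\omega\beta(t))$ for almost every $(\omega,t) \in G \times K$; reapplying Fubini in the opposite direction produces some $t_0 \in \R$ for which this convergence holds for almost every $\omega$, and the change of variables $\omega' := \omega\beta(t_0)$ combined with translation invariance recovers $(1)$.

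The polynomial equivalences reduce to direct computation. If $f_n = \sum_k c_{k,n} h_{\lambda_k}$ is a Dirichlet polynomial (hence continuous), then by definition of the vertical limit $D_n^\omega(s) = \sum_k c_{k,n} h_{\lambda_k}(\omega) e^{-\lambda_k s}$. Using $h_{\lambda_k}\circ\beta(t) = e^{-i\lambda_k t}$ and multiplicativity of characters, one finds $D_n^\omega(0) = f_n(\omega)$ and $D_n^\omega(it) = \sum_k c_{k,n} h_{\lambda_k}(\omega\beta(t)) = f_n(\omega\beta(t)) = (f_n)_\omega(t)$ \emph{pointwise} in $t$ (not merely a.e.), because $f_n$ is continuous. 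Hence $(3)$ is literally $(1)$ and $(4)$ is literally $(2)$. The one delicate point I anticipate is the null-set bookkeeping: the intersection over $n \in \N$ of the conull sets from the restriction lemma must be taken \emph{before} any Fubini application, so that the bridge between group-level and $\R$-level convergence holds uniformly in $n$.
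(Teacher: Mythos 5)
Your proposal is correct and follows essentially the same route as the paper: both rest on Fubini's theorem applied to the jointly measurable map $(\omega,t)\mapsto\omega\beta(t)$ on a $\sigma$-finitized version of $G\times\R$ (you use compact exhaustion, the paper uses the weight $(1+t^2)^{-1}dt$), combined with translation invariance of the Haar measure and the a.e.\ identification $f_\omega(t)=f(\omega\beta(t))$ taken simultaneously over the countably many functions involved. Your reduction of $(3)$ and $(4)$ to $(1)$ and $(2)$ via $D_n^\omega(it)=f_n(\omega\beta(t))$ for polynomials is exactly what the paper leaves implicit.
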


Note that the question we formulated in \eqref{mainQ} then reads: For which frequencies $\lambda$
and for which  $\lambda$-Dirichlet groups $(G,\beta)$ is it true that for
every $D \in \mathcal{H}_1(\lambda)$ we for
almost all $\omega \in G$ have
\begin{align} \label{D-mainQ}
D^\omega = \lim_{x \to \infty} R_{x}^{\lambda,k}(D^\omega)
\end{align}
almost everywhere on the imaginary axis?
\medskip

\subsection{The reflexive case -- Dirichlet series}
The following result is  an immediate  consequence of Theorem \ref{Duy} and Lemma \ref{hedesaks}.

  \begin{Theo} \label{Dir-Duy}
  Let $\lambda$ be a frequency, $(G, \beta)$ a $\lambda$-Dirichlet group, and $1 < p < \infty$.
  Then for every   $D = \sum a_n e^{-\lambda_n s} \in \mathcal{H}_p(\lambda)$
\begin{equation*}
\Big( \int_{G} \sup_{x} \Big| \sum_{n=1}^x a_n  h_{\lambda_n}(\omega)\Big|^{p} d\omega \Big)^{\frac{1}{p}} \le \text{CH}_p \,\|D\|_{p}.
\end{equation*}
In particular,  almost all vertical limits $D^\omega$
converge almost everywhere on the imaginary axis, and consequently also on the right half-plane.
  \end{Theo}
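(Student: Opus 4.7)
The plan is to combine Theorem~\ref{Duy} with the Bohr transform identification \eqref{Bohrmap} (to obtain the maximal inequality) and then feed the resulting almost everywhere convergence on $G$ into Lemma~\ref{hedesaks} (to translate it into convergence of vertical limits on the imaginary axis). There is essentially no new analytic content to produce; all the work sits in the two results we are allowed to invoke.

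For the maximal inequality I would let $f := \Bcal^{-1}(D)\in H_{p}^{\lambda}(G)$ be the Hardy function associated to $D$ under the Bohr transform, so that $\widehat{f}(h_{\lambda_{n}})=a_{n}$ and $\|f\|_{p}=\|D\|_{p}$ by the very definition of $\mathcal{H}_{p}(\lambda)$. Substituting these identities into the maximal inequality of Theorem~\ref{Duy} yields at once
\[
\Big( \int_{G} \sup_{x} \Big| \sum_{\lambda_n < x} a_{n} h_{\lambda_n}(\omega) \Big|^{p} d\omega \Big)^{\frac{1}{p}} \le \text{CH}_p \,\|D\|_{p},
\]
which is the asserted bound.

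For the statement on vertical limits, Theorem~\ref{Duy} also delivers almost everywhere convergence of the partial sums $f_{N}:=\sum_{n\le N} a_{n} h_{\lambda_{n}}$ to $f$ on $G$. Each $f_{N}$ is a polynomial in $H_{1}^{\lambda}(G)$ whose associated Dirichlet polynomial under the Bohr transform is exactly $D_{N}:=\sum_{n\le N}a_{n}e^{-\lambda_{n}s}$. Applying the implication $(1)\Rightarrow(4)$ of Lemma~\ref{hedesaks} gives
\[
\lim_{N\to \infty} D_{N}^{\omega}(it)=f_{\omega}(t)
\]
for almost every $\omega\in G$ and almost every $t\in\R$. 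Since $D_{N}^{\omega}(it)=\sum_{n\le N}a_{n}h_{\lambda_{n}}(\omega)e^{-i\lambda_{n}t}$ is precisely the $N$th partial sum of the vertical limit $D^{\omega}$ evaluated at $it$, a Fubini argument yields that for almost every $\omega\in G$ the series $D^{\omega}$ converges at $it$ for almost every $t\in\R$. The right half-plane assertion is then automatic from the classical fact that convergence of a general Dirichlet series at some point $s_{0}$ forces convergence on $[\operatorname{Re}>\operatorname{Re} s_{0}]$: for each such $\omega$ pick any $t_{0}\in\R$ at which $D^{\omega}$ converges, and conclude convergence on $[\operatorname{Re}>0]$. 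The only point requiring momentary attention is identifying the partial sums appearing in Lemma~\ref{hedesaks}(4) with those of $D^{\omega}$ on the imaginary axis; once this is noted, no further obstacle remains.
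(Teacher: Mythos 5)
Your proposal is correct and follows exactly the route the paper intends: the paper derives Theorem \ref{Dir-Duy} as an immediate consequence of Theorem \ref{Duy} (transported via the Bohr transform) together with Lemma \ref{hedesaks}, which is precisely what you do. The details you fill in — identifying $\widehat{f}(h_{\lambda_n})=a_n$ and $\|f\|_p=\|D\|_p$, invoking $(1)\Rightarrow(4)$ of Lemma \ref{hedesaks} for the partial sums, and then using the classical half-plane convergence of general Dirichlet series — are all sound.
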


\noindent For $p=2$   Helson in \cite[\S2]{Helson3} proves convergence on $[Re > 0]$
 under  Bohr's condition for $\lambda$, i.e.
 \begin{equation*}
 \exists ~l>0 ~\forall ~\delta >0 ~\exists~ C>0~ \forall ~n\in \N \colon ~\lambda_{n+1}-\lambda_{n}\ge C e^{-(l+\delta)\lambda_{n}}
 \end{equation*}
   (see also \cite[Theorem 9, p. 29]{Helson}),
and in the ordinary case and $1\leq p < \infty$ this is done by  Bayart \cite{Bayart}.
Still in the ordinary case, convergence  on the imaginary axis $[Re = 0]$ for  $p=2$  has to be credited to Hedenmalm-Saksman \cite{HedenmalmSaksman}, and for the full scale $1<p<\infty$ this is observed in \cite{DefantSchoolmann2}.

\medskip

But for $p=1$ the first two assertions of the  preceding result are false. Otherwise  by Lemma \ref{hedesaks} we would see that all Fourier series of functions $f \in H_1(\T^\infty)$
 converge pointwise almost everywhere which we know is false (even in the one variable case). The third statement we only know under  the additional Landau condition $(LC)$ for $\lambda$ (see again \cite{DefantSchoolmann2}), i.e.
\begin{equation} \label{LC}
\forall~ \delta>0~ \exists~ C>0~ \forall~ n \in \N \colon ~ \lambda_{n+1}-\lambda_{n}\ge Ce^{-e^{\delta \lambda_{n}}};
\end{equation}
this in fact is a condition weaker  than $(BC)$.

\begin{Theo} \label{Landau}
Let  $\lambda$ be a frequency
    with $(LC)$
    and
    $(G, \beta)$  a $\lambda$-Dirichlet group. Then
for $D = \sum a_n  e^{-\lambda_n s} \in \mathcal{H}_1(\lambda)$
almost all vertical limits $D^\omega$ of $D$ converge on $[\text{Re}>0]$. More precisely, if $f \in H_1^\lambda (G)$ is associated to $D$, then there is a null set $N\subset G$ such that  for $\omega \notin N$, all $u>0$ and almost all $t \in \R$
\begin{equation*} \label{formula}
D^{\omega}(u+it)=\sum_{n=1}^{\infty} a_n h_{\lambda_{n}}(\omega)  e^{-\lambda_n (u+it)} =(f_{\omega}*P_{u})(t) .
\end{equation*}

\end{Theo}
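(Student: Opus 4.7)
The plan is to identify each vertical limit $D^{\omega}$ as the Bohr transform of a translate of $f$, apply the cited convergence result from \cite{DefantSchoolmann2} to each $D^{\omega}$, and then unwind the resulting Poisson-type convolution on $G$ using that $p_{u}$ is the push-forward of $P_{u}$ under $\beta$.

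First I would observe that for every $\omega \in G$ the translate $f^{\omega}(\tau) := f(\omega\tau)$ again lies in $H_{1}^{\lambda}(G)$ with $\|f^{\omega}\|_{1} = \|f\|_{1}$, and a direct computation gives $\widehat{f^{\omega}}(h_{\lambda_{n}}) = h_{\lambda_{n}}(\omega)\, a_{n}$, so that $\mathcal{B}(f^{\omega}) = D^{\omega} \in \mathcal{H}_{1}(\lambda)$. Applying the cited $(LC)$-theorem to $D^{\omega}$ then produces, for $\omega$ outside a null set, convergence of $D^{\omega}$ on $[\text{Re} > 0]$ together with the Poisson representation
\[
D^{\omega}(u+it) \;=\; (f^{\omega} * p_{u})(\beta(t)), \qquad u > 0,\,\,\text{ a.e. } t \in \R.
\]
If that cited statement only records bare convergence without identifying the limit, I would add the Poisson formula by approximating $f^{\omega}$ in $\|\cdot\|_{1}$-norm by Dirichlet polynomials (for which the identity is immediate since both sides reduce to the finite sum $\sum c_{n} e^{-i\lambda_{n} t} e^{-u\lambda_{n}}$) and passing to the limit via the uniform convergence of $\mathcal{H}_{1}(\lambda)$-series on half-planes $[\text{Re} \ge \varepsilon]$ that $(LC)$ guarantees.

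Next I would unwind the convolution on $G$. Since $p_{u} = \beta_{*}(P_{u}(s)\,ds)$ and $\beta$ is a group homomorphism,
\[
(f^{\omega} * p_{u})(\beta(t)) \;=\; \int_{\R} f\bigl(\omega\,\beta(t-s)\bigr)\, P_{u}(s)\,ds.
\]
By Fubini applied to the a.e.-valid identity $f_{\omega}(t') = f(\omega\beta(t'))$, for almost every $\omega$ the integrand equals $f_{\omega}(t-s)$ for a.e.\ $s$ (using translation invariance of Lebesgue measure), so the integral equals $(f_{\omega} * P_{u})(t)$ for a.e.\ $t \in \R$.

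Finally, to obtain a single null set $N \subset G$ that works simultaneously for every $u>0$, I would first run the preceding argument for $u$ in a countable dense set $U \subset (0,\infty)$ and take $N$ to be the union of the associated null sets. For $\omega \notin N$ the identity $D^{\omega}(u+it) = (f_{\omega}*P_{u})(t)$ holds for every $u \in U$ and a.e.\ $t$, and continuity in $u$ of both sides (the left as a holomorphic function on $[\text{Re}>0]$, the right as a Poisson integral in $u$ of a locally integrable function) extends the equality to every $u>0$. The main technical obstacle is the strength of the cited \cite{DefantSchoolmann2}-result: if the Poisson representation is already packaged there, the remainder is essentially Fubini book-keeping, but if only bare convergence is available, the polynomial-approximation step is delicate because it requires uniform control of the Dirichlet series on $[\text{Re} \ge \varepsilon]$, which is itself the heart of the $(LC)$-argument.
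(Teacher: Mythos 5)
There is a genuine gap, and it sits exactly at the heart of the theorem. Note first that this paper does not actually prove Theorem \ref{Landau}; it is quoted from \cite{DefantSchoolmann2}, so your attempt has to stand on its own. Your opening observation is correct and tidy: $f^{\omega}(\tau):=f(\omega\tau)$ lies in $H_1^\lambda(G)$ with $\widehat{f^\omega}(h_{\lambda_n})=h_{\lambda_n}(\omega)a_n$, hence $\mathcal{B}(f^\omega)=D^\omega\in\mathcal{H}_1(\lambda)$. But the next step --- ``applying the cited $(LC)$-theorem to $D^\omega$ produces, for $\omega$ outside a null set, convergence of $D^\omega$ on $[\mathrm{Re}>0]$'' --- does not work under either reading of the citation. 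If the cited result is Theorem \ref{Landau} itself (``almost all vertical limits of a given series converge''), then applying it to the series $D^\omega$ yields a statement about almost all vertical limits \emph{of} $D^\omega$, i.e.\ about $D^{\omega\omega'}$ for a.e.\ $\omega'$; it says nothing about the fixed series $D^\omega$, and no null set in the variable $\omega$ is produced. If instead you are invoking a putative result ``every $D'\in\mathcal{H}_1(\lambda)$ converges on $[\mathrm{Re}>0]$ under $(LC)$,'' that statement is false: already for $\lambda=(\log n)$ (which satisfies $(LC)$) one has $\mathcal{H}_2\subset\mathcal{H}_1$ and there are $\mathcal{H}_2$-Dirichlet series with $\sigma_c=\tfrac12$. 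The ``almost all vertical limits'' quantifier is essential and cannot be recovered by applying a pointwise-in-$D$ theorem to each translate. For the same reason your fallback in the second paragraph fails: $(LC)$ guarantees uniform convergence on $[\mathrm{Re}\ge\varepsilon]$ for \emph{bounded} series (the space $\mathcal{D}_\infty(\lambda)$, via Landau/Schoolmann), not for $\mathcal{H}_1(\lambda)$-series, so the polynomial-approximation step has no uniform control to pass to the limit with.

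The remaining bookkeeping is fine: the Fubini unwinding of $(f^\omega * p_u)(\beta(t))$ into $(f_\omega*P_u)(t)$, and the device of a countable dense set $U\subset(0,\infty)$ plus continuity in $u$ of both sides (the left side being holomorphic once convergence is known, the right side by dominated convergence using $\sup_{u\in[u_0,u_1]}P_u(s)\lesssim (1+s^2)^{-1}$ and $|f_\omega|*P_{u}(0)\le\overline{M}(f)(\omega)<\infty$) to get a single null set $N$ working for all $u>0$. What is missing is the actual analytic content: one must show that for a.e.\ $\omega$ the partial sums $\sum_{n\le N}a_n h_{\lambda_n}(\omega)e^{-\lambda_n(u+it)}$ converge, not merely that they are Riesz-summable. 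Within this paper's toolkit the natural route is Corollary \ref{Dirichletseries}, which already gives $(\lambda,k)$-Riesz summability of almost all $D^\omega$ on $[\mathrm{Re}>0]$ with the correct limit $(f_\omega*P_u)(t)$; the missing ingredient is a theorem converting Riesz summability into ordinary convergence under $(LC)$ (a Hardy--Riesz/Landau-type comparison of $\sigma_c$ with $\sigma_c^{\lambda,k}$ under a separation hypothesis on $\lambda$, or equivalently a maximal inequality for the partial sums of the translated series). That conversion is precisely where $(LC)$ enters, and it is absent from your argument.
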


See also Section \ref{secondRieszmeans}, where we show that in the ordinary case Theorem \ref{Dir-Duy} is false for $p=1$, even if  we there replace summation of the series by the weaker  Ces\`{a}ro  summation.
 Alternative  more adapted  summation methods have to be taken into account which we describe now
 (recall also the discussion from \eqref{cesarologn}).
%
%
%

\subsection{Riesz means -- Dirichlet series}
Here we repeat some fundamental definitions and results on Riesz-summability of general Dirichlet series from \cite{HardyRiesz}.
Fix some frequency  $\lambda$, some $k\ge 0$, and a $\lambda$-Dirichlet series $D=\sum a_{n}e^{-\lambda_{n}s}$. The first $(\lambda,k)$-Riesz mean of $D$ of length $x>0$ is given by the Dirichlet polynomial
$$R_{x}^{\lambda,k}(D)(s):=\sum_{\lambda_{n}<x} a_{n} \Big(1-\frac
{\lambda_{n}}{x}\Big)^{k}e^{-\lambda_{n}s}.$$
We say that a Dirichlet series $D=\sum a_{n}e^{-\lambda_{n}s}$ is $(\lambda,k)$-Riesz summable at $s_{0} \in \C$, if the limit
$$\lim_{x\to \infty} R_{x}^{\lambda,k}(D)(s_{0})=\lim_{x\to \infty}\sum_{\lambda_{n}<x} a_{n} \Big(1-\frac{\lambda_{n}}{x}\Big)^{k}e^{-\lambda_{n}s_{0}}$$
exists, and $D=\sum a_{n}e^{-\lambda_{n}s}$ is  $(\lambda,k)$-Riesz summable in $\mathcal{H}_p(\lambda)$
whenever  this limit exists in $\mathcal{H}_p(\lambda)$. As in \eqref{turmalet}, if $D$ is (even) $(e^\lambda,k)$-Riesz summable, then  we have
\begin{equation} \label{turmalet3}
\lim_{x \to \infty} R_x^{e^\lambda,k}(D)(s) = \lim_{x \to \infty} R_{e^x}^{e^\lambda,k}(D)(s)=\lim_{x \to \infty} S_x^{\lambda,k}(D)(s)\,,
\end{equation}
where
\[
S_x^{\lambda, k} (D)(s) :=
\sum_{\lambda_n < x} a_n \Big(1- \frac{e^{\lambda_n}}{e^x} \Big)^k e^{\lambda_n s}
\]
is what we call the  second $(\lambda,k)$-Riesz mean of $D$ of length $x>0$.
\medskip

Let us again comment on the ordinary case $\lambda=(\log n)$. Then the first $(\lambda,k)$-Riesz mean  of $D=\sum a_{n}n^{-s}$ of length $x$ is given  by
$$R_{x}^{\lambda,k}(D)(s)=\sum_{\log n<x} a_{n}\Big(1-\frac{\log n}{x}\Big)^kn^{-s};$$
Hardy and Riesz in \cite{HardyRiesz} call it the logarithmic mean of $D$
of length $x$.
As in \eqref{france3}, we for $\lambda =(\log n)$
and $x = N \in \N$  obtain   Ces\`{a}ro means,
\begin{align} \label{france4}
R_{N}^{e^\lambda,k}(D)(s)
= \sum_{n < N} a_n \big(1-\frac{n}{N}\big) a_n n^{-s}
=
\frac{1}{N}  \sum_{k=0}^{N-1} \sum_{n \leq k} a_n n^{-s}\,.
\end{align}

From Remark \ref{alaphilipX} we may deduce the following equivalence for the case $\lambda = (n)$.

\begin{Rema} \label{sagan2}
Let  $D= \sum a_n e^{-\lambda_ns}$,  $ s_0 \in \C$, and $k \ge 0$.
 Then  for $\lambda = (n)$ the following are  equivalent:
\begin{itemize}
\item[(1)]
$D$ converges at $s_0$
\item[(2)]
$D$  is $(e^\lambda,k)$-summable at $s_0$
\end{itemize}
Moreover, the limits coincide,
and the analog  result holds true, whenever we replace convergence in $s_0$ by  convergence with respect to the
$\mathcal{H}_p$-norm, $1 \leq p \leq \infty$.
\end{Rema}

All results collected in Proposition \ref{MarcosgeburtstagX} may be  translated to
$\lambda$-Dirichlet series. We do this in terms of Riesz-abscissas of convergence.
Define for  $D=\sum a_{n}e^{-\lambda_{n}s}$
\[
\sigma_c^{\lambda,k}(D) = \inf \Big\{  \sigma \in \R \colon
 \text{$D$ is $(\lambda,k)$-Riesz summable at $\sigma$} \Big\}\,.
\]
Then it is proved in \cite[Theorem 24 and 25, p. 43]{HardyRiesz} that $D$ converges on the half-plane $[Re > \sigma_c^{\lambda,k}(D)]$
whereas it  diverges on $[Re < \sigma_c^{\lambda,k}(D)]$. Moreover, the limit function is holomorphic (see \cite[Theorem 27, p. 44]{HardyRiesz}). Obviously, we have  that
\[
\sigma_c^{\lambda,0}(D)= \sigma_c(D)\,.
\]
From Hardy and Riesz \cite[Theorem 16, p. 29 and Theorem 30, p. 45]{HardyRiesz} (see also again Proposition \ref{MarcosgeburtstagX}) we know the following.

\begin{Prop} \label{MarcosgeburtstagY}
Let $\lambda$ be a frequency and $0 \leq k < \ell$. Then for every $\lambda$-Dirichlet series $D$ we have
\begin{itemize}
\item[(1)]
 $\sigma_c^{\lambda,k}(D) \leq \sigma_c^{\lambda,\ell}(D)$
 \medskip
\item[(2)]
$\sigma_c^{e^\lambda,k}(D) = \sigma_c^{\lambda,k}(D)$
\end{itemize}
\end{Prop}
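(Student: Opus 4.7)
Both parts reduce to applying the consistency theorems of Proposition~\ref{MarcosgeburtstagX} pointwise in the real variable $\sigma$, together with the fact (\cite[Theorems~24 and 25]{HardyRiesz}) that for a fixed order the set of real $\sigma$ at which $D$ is Riesz-summable is a right half-line, so its infimum is exactly the corresponding abscissa and $D$ is Riesz-summable on the entire open half-plane to its right.

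For part~(1) I would apply the first theorem of consistency (Proposition~\ref{MarcosgeburtstagX}(1)) to the scalar series $\sum a_n e^{-\lambda_n \sigma}$ at each real $\sigma$; this produces an inclusion between the two sets of $\sigma$ at which $D$ is $(\lambda, k)$- respectively $(\lambda,\ell)$-Riesz summable, and the claimed ordering of the abscissas is then immediate by taking infima of the respective right half-lines. For part~(2), one direction proceeds in exactly the same spirit: the second theorem of consistency (Proposition~\ref{MarcosgeburtstagX}(2)) says that $(e^\lambda,k)$-summability of $\sum a_n e^{-\lambda_n \sigma}$ at a point implies its $(\lambda,k)$-summability at the same point, which at the level of sets of summability and their infima yields one of the two required inequalities.

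The remaining inequality in part~(2) is the main obstacle, since first and second $(\lambda,k)$-Riesz means weight the terms quite differently and no elementary set-inclusion argument is available. Here I would fix $\sigma_0$ strictly greater than $\sigma_c^{\lambda,k}(D)$, pass to the horizontal translate $D_{\sigma_0}$, and upgrade its $(\lambda,k)$-Riesz summability at $0$ into $(e^\lambda,k)$-Riesz summability at $0$ via the deeper Hardy-Riesz comparison in \cite[Theorem~30, p.~45]{HardyRiesz} (together with the identification of second $(\lambda,k)$-Riesz means with first $(e^\lambda,k)$-Riesz means along the subsequence $e^x$ recorded in \eqref{turmalet3}); letting $\sigma_0\downarrow \sigma_c^{\lambda,k}(D)$ closes the argument. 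The heart of the proof is thus this Tauberian-type comparison between the two Riesz-summation methods at the same order, which I would cite from Hardy-Riesz as a black box.
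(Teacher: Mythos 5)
Your route is the paper's route: the authors give no argument for Proposition~\ref{MarcosgeburtstagY} beyond the citation to \cite[Theorem 16, p.~29 and Theorem 30, p.~45]{HardyRiesz}, and your plan is precisely an unpacking of that citation --- the first theorem of consistency for (1), the second theorem of consistency for the easy half of (2), and the deeper Hardy--Riesz comparison between first and second means (used as a black box, together with \eqref{turmalet3} and the half-plane structure of the summability set from Theorems 24--25) for the hard half of (2). The translation trick you describe for the hard inequality of (2) --- summability by first means at $s_0$ upgrades to summability by second means strictly to the right of $s_0$, then let $\sigma_0\downarrow\sigma_c^{\lambda,k}(D)$ --- is exactly how the equality of the two abscissas is obtained in \cite{HardyRiesz}, so citing it is legitimate here.

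The one point you must make explicit is the direction of the inequality in (1), because as described your argument proves the \emph{reverse} of what is printed. The first theorem of consistency says that $(\lambda,k)$-summability at a point implies $(\lambda,\ell)$-summability there for $k\le\ell$; hence the set of $\sigma$ at which $D$ is $(\lambda,\ell)$-summable \emph{contains} the set at which it is $(\lambda,k)$-summable, and taking infima of these half-lines yields $\sigma_c^{\lambda,\ell}(D)\le\sigma_c^{\lambda,k}(D)$ --- a higher order is a more powerful summation method, so the abscissa can only move to the left. This is the opposite of the inequality displayed in item (1) of the proposition, so your sentence ``the claimed ordering of the abscissas is then immediate by taking infima'' hides a sign discrepancy: either the printed statement has the inequality reversed (which the citation to Theorem 16 strongly suggests), or your set inclusion is the wrong way around; you cannot have both. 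State the inclusion and the resulting inequality explicitly and reconcile it with the statement before declaring (1) proved.
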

Finally, we note that for $f \in H_1^\lambda(G)$, where $(G, \beta)$ is some $\lambda$-Dirichlet group
and $D \in \mathcal{H}_1(\lambda)$ with $D:=\Bcal(f)$, we for all $k,x>0$ have
$$\mathcal{B}(R_{x}^{\lambda,k}(f))=R_{x}^{\lambda,k}(D).$$

\medskip

\section{\bf Results} \label{mainresults}
We start  summarizing
our results according to the following table of content, and then later on all proofs will be given in  the final Section \ref{proofs}.

\medskip

\begin{itemize}
\item
Pointwise approximation by first Riesz means (Section \ref{appfirst})
\item
Failure of pointwise approximation by second Riesz means (Section \ref{secondRieszmeans})
\item
A Hardy-Littlewood maximal operator (Section \ref{firstRieszmeans})
\item
Norm approximation by  Riesz means  (Section \ref{approxnormchapter})
\item
Uniformly almost periodic functions (Section \ref{aap})
\end{itemize}

\medskip

\subsection{Pointwise approximation   by first Riesz means} \label{appfirst}
Generally speaking, in order to obtain almost everywhere convergence of the Riesz means of some
$f \in H^\lambda_1(G)$, it is sufficient  to prove  an adequate   maximal inequality.

\medskip

For that purpose recall that for some measure space $(\Omega, \mu)$ the weak $L_{1}$-space
$L_{1, \infty}(\mu)$ is given by all measurable functions $f\colon \Omega\to \C$ for which  there is a constant $C>0$ such that for all $\alpha>0$ we have
\begin{equation} \label{Lorentz}
m\big(\left\{ \omega \in G \mid |f(\omega)|>\alpha \right\} \big)\le \frac{C}{\alpha}.
\end{equation}
Together with $\|f\|_{1,\infty}:= \inf C$ the space $L_{1,\infty}(\mu)$ becomes a quasi Banach space (see e.g. \cite[\S 1.1.1 and \S 1.4]{Grafakos1}), where the triangle inequality holds with constant $2$.

\medskip

The following maximal inequality   forms  the core of this article.
\begin{Theo}\label{maximalineqtypeI} Let $\lambda$ be a frequency, $k>0$ and $(G,\beta)$ a $\lambda$-Dirichlet group. Then
$$R_{\max}^{\lambda,k}(f)(\omega):=\sup_{x>0} \big|R_{x}^{\lambda,k}(f)(\omega)\big|=\sup_{x>0} \Big| \sum_{\lambda_{n}<x} \widehat{f}(h_{\lambda_{n}}) \Big(1-\frac{\lambda_{n}}{x}\Big)^{k} h_{\lambda_n}(\omega)\Big|$$
defines a bounded sublinear operator from $H_{1}^{\lambda}(G)$ to $L_{1,\infty}(G)$, and from $H_{p}^{\lambda}(G)$ to $L_{p}(G)$, whenever $1<p\le \infty$.
\end{Theo}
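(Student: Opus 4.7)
The plan is to pass the maximal inequality on the Dirichlet group through the bridge $f_\omega(t) := f(\omega\beta(t))$ to a one-dimensional Hardy--Littlewood maximal estimate on $\R$. For $f \in Pol(\lambda)$ (dense in each $H_p^\lambda(G)$), a direct computation of Fourier coefficients gives the convolution identity on $\R$
\[
(R_x^{\lambda,k}(f))_\omega \;=\; f_\omega \ast \Phi_x^k,
\]
where $\Phi_x^k(t) := x\,\Phi^k(xt)$ and $\Phi^k$ is the inverse Fourier transform of $\xi \mapsto (1-|\xi|)_+^k$, the one-dimensional Bochner--Riesz kernel of order $k$. Because $k > 0$, repeated integration by parts yields $|\Phi^k(t)| \le C\min(1,|t|^{-k-1})$, so that $\Phi^k$ admits a radial decreasing $L_1$-majorant. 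By the classical theory of approximations of the identity, for every locally integrable $g$ on $\R$ one then has $\sup_{x>0}|(g\ast \Phi_x^k)(0)| \le C\,\widetilde{M}g(0)$, where $\widetilde M$ denotes the Hardy--Littlewood maximal operator on $\R$.

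Specialising to $g = f_\omega$ produces the pointwise domination
\[
R^{\lambda,k}_{\max}(f)(\omega) \le C\,M_G f(\omega),\qquad M_G f(\omega) := \sup_{r>0}\frac{1}{2r}\int_{-r}^{r} |f(\omega\beta(s))|\,ds,
\]
so the entire theorem reduces to appropriate bounds for the Hardy--Littlewood maximal operator $M_G$ on the Dirichlet group $(G,\beta)$ --- precisely the operator whose construction the introduction announces as the paper's main technical device. The strong-type bound $M_G\colon L_p(G) \to L_p(G)$ for $1 < p \le \infty$ is elementary: since $M_G f(\omega) = \widetilde M f_\omega(0)$, one averages $\frac{1}{2T}\int_{-T}^T\!\int_G |\widetilde{M}f_\omega(t)|^p\,d\omega\,dt$, uses Haar-invariance $\omega \mapsto \omega\beta(t)$ to recognise the inner integral as independent of $t$, and applies the classical one-dimensional $L_p$-maximal inequality slice-wise before letting $T \to \infty$.

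The main obstacle is the weak-type $(1,\infty)$ bound on $H_1^\lambda(G)$, where the Fubini trick collapses because $f_\omega$ need not belong to $L_1(\R)$. My plan is to route through the Poisson maximal operator $P^*f(\omega) := \sup_{u>0}|(f\ast p_u)(\omega)|$. The elementary kernel inequality $\frac{1}{2r}\mathbf{1}_{[-r,r]}(t) \le \pi P_r(t)$ on $\R$ gives, after push-forward by $\beta$, the pointwise domination $M_G f \le \pi\, P^*f$, so it suffices to establish the Hardy-space maximal bound
\[
\|P^*f\|_{L_{1,\infty}(G)} \;\lesssim\; \|f\|_{H_1^\lambda(G)}.
\]
This is the analogue in our setting of the Burkholder--Gundy--Silverstein maximal characterisation of $H_1$. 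The intended argument is to interpret $F(u+it,\omega) := (f\ast p_u)(\omega\beta(t))$ as the restriction to $[\text{Re}>0]$ of a holomorphic extension of $f$, to reduce via the universal isometric isomorphism \eqref{crucial} and the internal description \eqref{internal} to a canonical $\lambda$-Dirichlet group (e.g.\ the Bohr compactification), and then to import classical half-plane $H_1$-theory. Once this weak-type estimate for $M_G$ is in hand, the density of $Pol(\lambda)$ in $H_1^\lambda(G)$ together with the standard sublinear-operator extension argument propagates the maximal inequality from polynomials to arbitrary $f \in H_1^\lambda(G)$, completing the proof.
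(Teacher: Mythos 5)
Your first reduction --- dominating $R^{\lambda,k}_{\max}(f)(\omega)$ pointwise by the Hardy--Littlewood maximal function of $f_\omega$ at $0$ via a kernel with an integrable radial decreasing majorant --- is essentially the paper's Proposition \ref{reduction}, reached through the Bochner--Riesz kernel rather than through the Perron-type integral representation of Lemma \ref{lemma2type1}; this part is sound (modulo two small points: for $k>1$ the corner of $(1-|\xi|)_+^k$ at $\xi=0$ limits the decay of $\Phi^k$ to $|t|^{-2}$ rather than $|t|^{-k-1}$, which is still integrable, and one must justify passing the convolution identity from polynomials to general $f\in H_1^\lambda(G)$, which the paper does via the bounded operator $\mathcal{A}$ in \eqref{operatorA}). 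The problem is your treatment of the maximal operator itself. The pointwise bound $M_Gf\le \pi\,P^*f$ is false as written: the kernel inequality $\frac{1}{2r}\mathbf{1}_{[-r,r]}\le \pi P_r$ only yields $\frac{1}{2r}\int_{-r}^{r}|f_\omega|\le \pi\,(|f_\omega|\ast P_r)(0)$, i.e.\ $M_Gf\le \pi\,P^*(|f|)$, and $|f|$ does not lie in $H_1^\lambda(G)$ (taking the modulus destroys the spectral condition), so no Hardy-space maximal characterisation of $P^*$ can be brought to bear. In fact the logical dependence in the paper runs in the opposite direction: Proposition \ref{Fatouthm} bounds $P^*$ \emph{by} $\overline{M}$ and then invokes Theorem \ref{HLoperator}. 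Moreover, ``importing classical half-plane $H_1$-theory'' after reducing to the Bohr compactification is not an available step --- there is no off-the-shelf Burkholder--Gundy--Silverstein theory for almost periodic boundary values, and constructing such a maximal estimate is precisely the content of the theorem you are trying to prove. Your slice-wise Fubini argument for the strong $L_p$ bound has a related localisation gap: $\int_{-T}^{T}|\widetilde{M}f_\omega(t)|^p\,dt$ cannot be controlled by $\int_{-T'}^{T'}|f_\omega|^p$ because the uncentered maximal function at $t\in[-T,T]$ sees arbitrarily long intervals.

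The missing idea is that the weak-type $(1,\infty)$ bound for $\overline{M}$ requires no Hardy-space structure at all --- it holds on all of $L_1(G)$ --- and is obtained by truncation rather than by holomorphic extension. The paper (Lemma \ref{graduate}) introduces the graduated operator $\overline{M}_A$, restricting to intervals contained in $[-A,A]$, writes the Haar measure of the level set $\Omega(\alpha)$ via Fubini as $\int_G \frac{1}{2A}\eta(\Omega_\omega(\alpha))\,dm(\omega)$ with $\Omega_\omega(\alpha)=\{t\in[-A,A]:\omega\beta(t)\in\Omega(\alpha)\}$, and runs the Vitali covering argument on each fibre using intervals inside $[-2A,2A]$; since $\int_G\frac{1}{4A}\int_{-2A}^{2A}|f_\omega(t)|\,dt\,dm(\omega)=\|f\|_{L_1(G)}$ by Haar invariance, this yields $m(\Omega(\alpha))\le \frac{10}{\alpha}\|f\|_1$ uniformly in $A$, and letting $A\to\infty$ gives the weak-type bound. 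The $L_p$ cases then follow from the trivial $L_\infty$ bound and Marcinkiewicz interpolation. Without this (or some equivalent covering argument on the fibres of $\beta$), your proof does not close.
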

By a standard argument (to be formalized in Lemma \ref{egoroff2}), we deduce from Theorem \ref{maximalineqtypeI}
for each  $f \in H_{1}^{\lambda}(G)$ almost everywhere summability by first Riesz means of the Fourier series
$\sum\widehat{f}(h_{\lambda_{n}}) h_{\lambda_{n}}$,
 but also of the translated Fourier series
$\sum \widehat{f}(h_{\lambda_{n}})e^{-u\lambda_n}  h_{\lambda_{n}}$.
\begin{Coro} \label{almosteverywheremaintheo}
Let $f \in H_{1}^{\lambda}(G)$ and $k>0$. Then for almost  all $\omega \in G$ we have
\begin{equation} \label{conv}
\lim_{x\to \infty} R_{x}^{\lambda,k}(f)(\omega) =f(\omega).
\end{equation}
Moreover, there is a null set $N \subset G$  such that for all $u > 0$ and all $\omega \notin N $
\begin{equation} \label{conv2}
\lim_{x\to \infty} R_{x}^{\lambda,k}(f*p_{u})(\omega)=(f*p_{u})(\omega).
\end{equation}
\end{Coro}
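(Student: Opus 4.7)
The plan is to derive both \eqref{conv} and \eqref{conv2} from Theorem \ref{maximalineqtypeI}, first by a standard density argument for \eqref{conv}, and then by a Hardy--Riesz abscissa argument that produces a single null set uniform in $u > 0$ for \eqref{conv2}.

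For \eqref{conv}, the plan is to combine the weak-type $(1,\infty)$ maximal inequality with a density argument. For every $\lambda$-Dirichlet polynomial $P \in \mathrm{Pol}(\lambda)$ the Riesz mean $R_x^{\lambda,k}(P)(\omega)$ stabilises once $x$ exceeds the largest frequency appearing in $P$, so $R_x^{\lambda,k}(P) \to P$ pointwise on $G$. Since $\mathrm{Pol}(\lambda)$ is dense in $H_1^\lambda(G)$ and the sublinear operator $R_{\max}^{\lambda,k}$ is weak-$(1,\infty)$ bounded by Theorem \ref{maximalineqtypeI}, the standard Banach-principle argument (to be formalised in Lemma \ref{egoroff2}) promotes this dense convergence to almost-everywhere convergence $R_x^{\lambda,k}(f)(\omega) \to f(\omega)$ for every $f \in H_1^\lambda(G)$.

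For \eqref{conv2}, let $N_0$ be the null set produced by \eqref{conv} and set $D := \mathcal{B}(f)$. For each $\omega \notin N_0$ the vertical-limit series $D^\omega$ is $(\lambda,k)$-Riesz summable at $s = 0$, since $R_x^{\lambda,k}(D^\omega)(0) = R_x^{\lambda,k}(f)(\omega) \to f(\omega)$; in particular $\sigma_c^{\lambda,k}(D^\omega) \le 0$. By the Hardy--Riesz half-plane theorem recalled in the paragraph preceding Proposition \ref{MarcosgeburtstagY}, $D^\omega$ is $(\lambda,k)$-Riesz summable at every $u > 0$. A direct comparison of the finite sums shows
\[
R_x^{\lambda,k}(D^\omega)(u) = \sum_{\lambda_n < x} \widehat{f}(h_{\lambda_n})\, h_{\lambda_n}(\omega)\, e^{-u\lambda_n}\Big(1 - \tfrac{\lambda_n}{x}\Big)^k = R_x^{\lambda,k}(f * p_u)(\omega),
\]
so $\lim_{x \to \infty} R_x^{\lambda,k}(f*p_u)(\omega) = D^\omega(u)$ for every $\omega \notin N_0$ and every $u > 0$, with $N_0$ independent of $u$.

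The remaining step is to identify $D^\omega(u)$ with $(f*p_u)(\omega)$ on a single null set. For each rational $q > 0$, applying \eqref{conv} to $f * p_q \in H_1^\lambda(G)$ gives a null set $N_q$ on whose complement $R_x^{\lambda,k}(f * p_q)(\omega) \to (f*p_q)(\omega)$; combined with the previous step this yields $D^\omega(q) = (f*p_q)(\omega)$ for every $\omega$ outside $N := N_0 \cup \bigcup_{q \in \mathbb{Q}_+} N_q$, which is still null. Since $u \mapsto D^\omega(u)$ is holomorphic on $[\mathrm{Re}>0]$ and $u \mapsto (f*p_u)(\omega)$ is continuous in $u > 0$ by continuity of the Poisson semigroup, the identity extends from $\mathbb{Q}_+$ to all $u > 0$. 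The principal technical difficulty is precisely this consolidation of null sets: a naive application of \eqref{conv} to $f * p_u$ would only produce a $u$-dependent null set, and the Hardy--Riesz abscissa detour is what allows the same $N$ to serve for every $u > 0$.
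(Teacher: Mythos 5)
Your proof of \eqref{conv} is exactly the paper's: pointwise stabilisation of $R_x^{\lambda,k}(P)$ for polynomials $P$, density of $\mathrm{Pol}(\lambda)$ in $H_1^\lambda(G)$, the weak-type bound of Theorem \ref{maximalineqtypeI}, and Lemma \ref{egoroff2}. For \eqref{conv2}, however, you take a genuinely different route. The paper first proves Proposition \ref{shiftversion}, the \emph{shifted} maximal inequality $\bigl\| \sup_{u\ge 0} R_{\max}^{\lambda,k}(f*p_u)\bigr\|_{1,\infty}\le C(k)\|f\|_1$ (obtained from the pointwise bound $\sup_{u>0}\overline{M}(f*p_u)(\omega)\le \overline{M}(f)(\omega)$), and then applies Lemma \ref{egoroff2} to the doubly indexed family $T_{x,u}(f)=R_x^{\lambda,k}(f*p_u)$, $S_u(f)=f*p_u$; this produces the $u$-independent null set directly and in fact yields the stronger statement that $\sup_{u}|R_x^{\lambda,k}(f*p_u)(\omega)-(f*p_u)(\omega)|\to 0$, i.e.\ convergence uniform in $u$. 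You instead exploit the Hardy--Riesz half-plane structure: summability of $D^\omega$ at $s=0$ off the null set $N_0$ from \eqref{conv} forces $\sigma_c^{\lambda,k}(D^\omega)\le 0$ and hence summability at every $u>0$, and you then identify the limit $D^\omega(u)$ with $(f*p_u)(\omega)$ by running \eqref{conv} over rational $u$ and extending by continuity. This is correct and avoids proving an additional maximal inequality, at the price of invoking the classical abscissa theory and of one step you gloss over: the continuity of $u\mapsto (f*p_u)(\omega)=\int_\R f_\omega(t)P_u(t)\,dt$ holds only off a further null set (namely where $f_\omega$ is locally integrable and $\overline{M}(f)(\omega)<\infty$, which is almost everywhere by Theorem \ref{HLoperator}; dominated convergence then applies since $P_u\le C_{a,b}P_a$ for $u\in[a,b]$), so this null set must also be absorbed into $N$. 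With that small addition your argument is complete; what it does not give, and the paper's does, is the uniformity in $u$ of the convergence in \eqref{conv2}.
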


Note that for some fixed $u>0$ the result from \eqref{conv2} is immediate from \eqref{conv}
since $ f*p_{u} \in H_{1}^{\lambda}(G)$,
but the point here is that the null set $N$ in fact can be chosen to be independent of $u$.

\medskip

In particular, (\ref{conv}) reproves (\ref{cesarointro}) with the choice $\lambda=(n)$, $k=1$ and the Dirichlet group $(\T, \beta_{\T})$. Since first Riesz means of order $k=0$ are precisely partial sums, Corollary \ref{almosteverywheremaintheo} may fail for $k=0$ (like it does for $\lambda=(n)$).
Let us also revisit the ordinary case $\lambda=(\log n)$.  Then Theorem \ref{maximalineqtypeI} gives the following substitute for the failure of \eqref{cesarologn}.
\begin{Coro} \label{pino}
Let $f \in H_{1}(\T^{\infty})$ and $k>0$. Then  for almost all $z\in \T^{\infty}$
\begin{equation} \label{logarithmicmeans}
\lim_{x\to \infty} \sum_{\log(\mathfrak{p}^{\alpha})<x} \widehat{f}(\alpha) \Big(1-\frac{\log(\mathfrak{p}^{\alpha})}{x}\Big)^{k}z^{\alpha}=f(z),
\end{equation}
that is, $f$ almost everywhere is the pointwise limit of its logarithmic means.
\end{Coro}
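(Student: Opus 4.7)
The plan is to specialize Corollary~\ref{almosteverywheremaintheo} to the frequency $\lambda = (\log n)$ together with the $(\log n)$-Dirichlet group $(\T^{\infty}, \beta_{\T^{\infty}})$ given by the Kronecker flow, using the identifications already made in the introduction.

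First, I would recall that, isometrically, $H_{1}(\T^{\infty}) = H_{1}^{(\log n)}(\T^{\infty})$ and that under the Kronecker flow the characters satisfy $h_{\log n} = z^{\alpha}$ whenever $n = \mathfrak{p}^{\alpha}$. Hence the Fourier expansion of $f \in H_{1}(\T^{\infty})$ indexed by multi-indices $\alpha$ with finitely many non-zero entries agrees with the abstract Fourier expansion along $(h_{\lambda_{n}})$ in the sense of Section~\ref{intro}, so any statement about the latter translates into a statement about the former. In particular, using formula~\eqref{france2}, the first $(\lambda,k)$-Riesz mean of $f$ reads
\[
R_{x}^{\lambda,k}(f)(z) = \sum_{\log \mathfrak{p}^{\alpha} < x} \widehat{f}(\alpha)\Big(1-\frac{\log \mathfrak{p}^{\alpha}}{x}\Big)^{k} z^{\alpha},
\]
which is exactly the logarithmic mean appearing in~\eqref{logarithmicmeans}.

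Second, I would simply apply Corollary~\ref{almosteverywheremaintheo} to $f$, viewed as an element of $H_{1}^{(\log n)}(\T^{\infty})$, to obtain
\[
\lim_{x\to\infty} R_{x}^{\lambda,k}(f)(z) = f(z) \qquad \text{for almost all } z \in \T^{\infty},
\]
which is precisely~\eqref{logarithmicmeans}. There is no real obstacle here beyond bookkeeping: the entire content is the translation of the abstract statement to the concrete polydisc setting via the identification $h_{\log n} \leftrightarrow z^{\alpha}$, and the only thing to take care of is that the supremum in $x>0$ ranges over a continuous parameter (so that all partial sums of the form $\sum_{\log \mathfrak{p}^{\alpha} < x}$ appear), which is already built into the formulation of Corollary~\ref{almosteverywheremaintheo}.
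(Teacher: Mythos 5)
Your proposal is correct and is exactly the route the paper takes: Corollary \ref{pino} is obtained by specializing Corollary \ref{almosteverywheremaintheo} to $\lambda = (\log n)$ and the Dirichlet group $(\T^{\infty}, \beta_{\T^{\infty}})$, using the identification $H_{1}(\T^{\infty}) = H_{1}^{(\log n)}(\T^{\infty})$ with $h_{\log n} = z^{\alpha}$ for $n=\mathfrak{p}^{\alpha}$ and the formula \eqref{france2} for the first Riesz means. Nothing further is needed.
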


 Let us explain in which sense \eqref{conv} is the limit case of \eqref{conv2}.

\begin{Prop}\label{Fatouthm}
Let $\lambda$ be arbitrary. Then the operator
$$T(f)(\omega):=\sup_{u>0} |(f*p_{u})(\omega)|$$ defines a bounded sublinear operator from $L_{1}(G)$ to $L_{1,\infty}(G)$, and from $L_{p}(G)$ to $L_{p}(G)$, provided $1 < p \le \infty$. In particular,  for almost all $\omega \in G$
$$\lim_{u\to 0} (f*p_{u})(\omega)=f(\omega).$$
\end{Prop}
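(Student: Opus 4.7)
The proof proceeds by transferring the classical Poisson maximal inequality on $\R$ to $G$ via the homomorphism $\beta$. Since $p_u$ is the push-forward of the $L_1(\R)$-measure $P_u\,dt$ under $\beta$, for $f\in L_1(G)$ and almost every $\omega\in G$ the restriction $f_\omega(t):=f(\omega+\beta(t))$ from the lemma cited in Section \ref{intro} satisfies (after using that $P_u$ is even)
\[
\big((f\ast p_u)\big)_\omega(r)=(f_\omega\ast P_u)(r),\qquad r\in\R,\;u>0.
\]
Because $P_u(t)=u^{-1}P_1(t/u)$ and $P_1\in L_1(\R)$ is radial and decreasing, the classical pointwise bound $\sup_{u>0}|g\ast P_u|(r)\le C\,M g(r)$ (with $M$ the Hardy-Littlewood maximal operator on $\R$) yields the domination
\[
T(f)(\omega)\le\sup_{u>0}(|f|\ast p_u)(\omega)\le C\cdot M_G|f|(\omega), \quad M_G g(\omega):=\sup_{T>0}\frac{1}{2T}\int_{-T}^T|g(\omega+\beta(t))|\,dt.
\]
Thus it suffices to prove that $M_G$ is of weak-type $(1,1)$ and strong-type $(p,p)$ for $1<p\le\infty$.

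The weak-$(1,1)$ bound for $M_G$ is obtained via a Calder\'on-style transference argument. The main technical obstacle is that $f_\omega$ is only \emph{locally} (and not globally) integrable on $\R$, so the classical weak-$(1,1)$ for $M$ cannot be applied directly and a truncation is needed. Concretely, for $T>0$ let $M_G^{(T)}$ denote the maximal operator with supremum restricted to scales $0<s\le T$. For $r\in[-T,T]$ and $s\le T$ the relevant average only involves $f_\omega$ on $[-2T,2T]$, so it coincides with the corresponding average of $f_\omega\mathbf{1}_{[-2T,2T]}\in L_1(\R)$. Invariance of the Haar measure $m$ under $\omega\mapsto\omega+\beta(r)$ combined with Fubini then gives
\[
m\{M_G^{(T)}f>\alpha\}=\int_G\frac{1}{2T}\bigl|\{r\in[-T,T]:\,M(f_\omega\mathbf{1}_{[-2T,2T]})(r)>\alpha\}\bigr|\,dm(\omega),
\]
and applying the weak-$(1,1)$ of $M$ on $\R$ together with the identity $\int_G|f_\omega(t)|\,dm(\omega)=\|f\|_1$ for every fixed $t\in\R$ yields $m\{M_G^{(T)}f>\alpha\}\le \tfrac{2C}{\alpha}\|f\|_1$ uniformly in $T$. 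Monotone convergence as $T\to\infty$ finishes the weak-$(1,1)$ for $M_G$. The $L_\infty\to L_\infty$ bound is immediate since $p_u$ is a probability measure, and Marcinkiewicz interpolation gives the strong $L_p$-bounds for $1<p<\infty$.

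Finally, for the almost-everywhere convergence $f\ast p_u\to f$: since $\widehat{p_u}(h_x)=e^{-u|x|}\to1$ for every $x\in\widehat\beta(\widehat G)$ and the $p_u$ are probability measures, $g\ast p_u\to g$ uniformly for every trigonometric polynomial $g$ on $G$. As trigonometric polynomials are dense in $L_1(G)$, the standard Banach continuity principle, applied to the sublinear operator $T$ which is of weak-type $(1,1)$, extends the pointwise limit to every $f\in L_1(G)$.
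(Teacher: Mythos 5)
Your proposal is correct and follows essentially the same route as the paper: dominate $T(f)$ pointwise by the Hardy--Littlewood maximal operator $\overline{M}(f)(\omega)=\sup_I \frac{1}{|I|}\int_I|f_\omega|$ on the Dirichlet group, invoke its weak-$(1,1)$ and strong-$(p,p)$ bounds, and conclude the a.e.\ convergence by density of trigonometric polynomials. The only cosmetic difference is that the paper simply cites its Theorem \ref{HLoperator} for the boundedness of $\overline{M}$ (proved there by the same Fubini-plus-truncation transference, with the Vitali covering lemma run directly instead of quoting the classical weak-$(1,1)$ on $\R$ as you do), and note that in your displayed identity for $m\{M_G^{(T)}f>\alpha\}$ the equality should really be an inequality $\le$, which is all that is needed.
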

Obviously,  Proposition \ref{Fatouthm} implies that for almost all $\omega \in G$
\begin{align*}
\lim_{u\to 0} \lim_{x\to \infty} R_{x}^{\lambda,k}(f*p_{u})(\omega)
&
=\lim_{u\to 0} (f*p_{u})(\omega)
\\
&
=f(\omega)=\lim_{x\to \infty} \lim_{u\to 0} R_{x}^{\lambda,k}(f*p_{u})(\omega),
\end{align*}
explaining, why \eqref{conv} is the limit case of \eqref{conv2}.
\medskip

For the  $(n)$-Dirichlet group $(\T, \beta_{\T})$ all this is linked with Fatou's famous theorem on radial limits. Observe that, if $f\in H_{1}(\T)$ and $0<r<1$, then with the choice $u:=-\log(r)$ we have
$$f*p_{u}(z)=\sum_{n=0}^{\infty}\widehat{f}(n) r^{n} z^{n},$$
and Proposition \ref{Fatouthm} implies, that for almost all $z\in \T$
\begin{equation} \label{powerfatou}
\lim_{r\to 1}  \sum_{n=0}^{\infty} \widehat{f}(n)r^{n}z^{n} =f(z).
\end{equation}
 In the terminology  of \cite[Theorem 11.23]{Rudin} this says that
 for  every $f\in H_{p}(\T)$ the so-called Poisson integral
 $P[f](w):=\sum_{n=0}^{\infty} \widehat{f}(n)w^{n}, w \in \D$  has radial limits almost everywhere on $\D$ . This is a crucial step of the proof of Fatou's theorem, which states, that if $ f\in H_{p}(\D)$, $1\le p \le \infty$, then for almost all $z\in \T$ the radial limits
$g^{*}(z):=\lim_{r\to 1} g(rz)$ exist and define a function from $H_{p}(\T)$. So in this sense,  Proposition \ref{Fatouthm} extends (\ref{powerfatou}) from $H_{p}^{(n)}(\T)$ to $H_{p}^{\lambda}(G)$.
\medskip

Let us again revisit Theorem \ref{maximalineqtypeI}. If $p=1$, then the function $R_{\max}^{\lambda,k}(f)$
for $f \in H^\lambda_1(G)$ may fail to be in $L_{1}(G)$, like it does for $\lambda=(n)$ and $(\T,\beta_{\T})$. On the other hand, philosophically speaking, for $f \in H^\lambda_1(G)$ a horizontal translation of $f$ by $u>0$, that is
\[
 f_u = f \ast p_u \sim \sum \widehat{f}(h_{\lambda_{n}})e^{-u\lambda_{n}} h_{\lambda_{n}},
 \]
 improves $f$ considerably. Then, as shown in the following result, $R_{\max}^{\lambda,k}(f_u)$ for any $u,k>0$ indeed belongs
 to $L_1(G)$. Of course we we already know   from Theorem~\ref{maximalineqtypeI} that $R_{\max}^{\lambda,k}(f_u)\in L_{p}(G)$ whenever $f\in H_{p}^{\lambda}(G)$ and $1 < p \le \infty$.

\begin{Theo}  \label{maxitranslatedtypeI} Let $u,k>0$. Then there is a constant $C=C(u,k)$ such that for all $\lambda$, $1\le p <\infty$ and $f \in H_{p}^{\lambda}(G)$ we have
$$\Big(\int_{G} \sup_{x>0} \Big| \sum_{\lambda_{n}<x} \widehat{f}(h_{\lambda_{n}})e^{-u\lambda_{n}} \Big(1-\frac{\lambda_{n}}{x}\Big)^{k} h_{\lambda_n}(\omega) \Big|^{p} d\omega\Big)^{\frac{1}{p}} \le C\|f\|_{p}.$$
\end{Theo}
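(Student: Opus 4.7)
The case $1 < p < \infty$ is immediate from Theorem~\ref{maximalineqtypeI}. Since $p_u$ is a probability measure with Fourier coefficients $\widehat{p_u}(h_{\lambda_n}) = e^{-u\lambda_n}$ supported on the $\lambda$-spectrum, convolution with $p_u$ maps $H_p^\lambda(G)$ contractively into itself; applying the maximal inequality of Theorem~\ref{maximalineqtypeI} to $f*p_u$ yields the claim with $C(u,k) = \text{CH}_p$.

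The substantive case is $p=1$, where Theorem~\ref{maximalineqtypeI} delivers only a weak-type $(1,\infty)$ bound that has to be upgraded. My plan is a convolution-kernel argument that exploits the Poisson damping $e^{-u\lambda_n}$. First I would rewrite $R_x^{\lambda,k}(f*p_u) = f * L_{u,x}^{\lambda,k}$, where
$$L_{u,x}^{\lambda,k}(\omega) := \sum_{\lambda_n < x} e^{-u\lambda_n}\Big(1 - \frac{\lambda_n}{x}\Big)^k h_{\lambda_n}(\omega)$$
is a trigonometric polynomial on $G$. Pulling the supremum inside the convolution yields the pointwise bound
$$\sup_{x>0}|R_x^{\lambda,k}(f*p_u)(\omega)| \le (|f| * \Phi_{u,k})(\omega), \qquad \Phi_{u,k}(\omega) := \sup_{x>0}|L_{u,x}^{\lambda,k}(\omega)|,$$
and Young's inequality then reduces the theorem to the uniform kernel estimate $\|\Phi_{u,k}\|_{L_1(G)} \le C(u,k)$, with constant independent of $\lambda$ and of the particular $\lambda$-Dirichlet group $(G,\beta)$.

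For this kernel bound I would transfer the problem to the real line through $\beta$. By the identification \eqref{internal} of Haar integration with Cesàro averaging along $\beta$, $\|\Phi_{u,k}\|_{L_1(G)}$ is controlled by the mean value on $\R$ of the function $t \mapsto \sup_{x>0}\big|\sum_{\lambda_n < x} e^{-u\lambda_n}(1-\lambda_n/x)^k e^{-i\lambda_n t}\big|$. Since $e^{-u\lambda_n} = \widehat{P_u}(\lambda_n)$, the inner sum is a Riesz-mean truncation of a one-sided Fourier inversion of the Poisson kernel $P_u$, turning matters into a Bochner-Riesz-type maximal estimate with Poisson damping on $\R$. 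The main obstacle is precisely this last step: producing a uniform, $\lambda$-independent $L^1$-bound for the maximal Riesz-truncated Poisson-type kernel. The natural route is an Abel-summation identity that shifts the Riesz weight $(1 - \lambda_n/x)^k$ onto successive differences of the exponential $e^{-u\lambda_n}$, followed by a pointwise domination of the resulting truncated sums by the genuine Poisson kernel $P_u$, whose $L^1(\R)$-norm equals $1$.
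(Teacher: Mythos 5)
Your reduction is valid as far as it goes: if the maximal kernel $\Phi_{u,k}=\sup_{x>0}|L^{\lambda,k}_{u,x}|$ were in $L_1(G)$ with a $\lambda$-independent bound, Young's inequality would indeed give the theorem for all $1\le p<\infty$ with a $p$-free constant. The problem is that this kernel estimate is false, so the approach collapses at exactly the step you flag as the main obstacle. Take a frequency with $\sum_n e^{-2u\lambda_n}=\infty$ for every $u>0$ (e.g.\ $\lambda_n\approx\log\log n$, perturbed so that the $\lambda_n$ are $\Q$-linearly independent). Then the characters $h_{\lambda_n}$ are independent Steinhaus variables on the corresponding Dirichlet group, and Khinchin's inequality gives
\[
\|L^{\lambda,k}_{u,x}\|_{L_1(G)}\;\ge\; c\Big(\sum_{\lambda_n<x}e^{-2u\lambda_n}\big(1-\tfrac{\lambda_n}{x}\big)^{2k}\Big)^{1/2}\;\ge\; c\,2^{-k}\Big(\sum_{\lambda_n<x/2}e^{-2u\lambda_n}\Big)^{1/2}\xrightarrow[x\to\infty]{}\infty,
\]
so already $\sup_x\|L^{\lambda,k}_{u,x}\|_{L_1(G)}=\infty$, a fortiori $\|\Phi_{u,k}\|_{L_1(G)}=\infty$. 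The same computation kills the mean-value version on $\R$ obtained through \eqref{internal}. So the maximal operator of the theorem, although bounded on $L_1$, is simply not dominated by convolution with an integrable kernel \emph{on the group}; no Abel-summation rearrangement of $L^{\lambda,k}_{u,x}$ can repair this. (A secondary issue: for $1<p<\infty$ your constant inherited from Theorem \ref{maximalineqtypeI} depends on $p$ and blows up as $p\downarrow 1$, whereas the point of the statement, as the paper stresses, is that $C(u,k)$ is independent of $p$.)

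The mechanism that actually works replaces convolution on $G$ by convolution along the flow $\beta$. The paper first proves an Abel-summation lemma (Lemma \ref{Abeltype1}) converting the coefficient damping $e^{-u\lambda_n}$ into the external damping $e^{-uy}$, i.e.\ reducing the theorem to a bound for $\sup_{x>0}|e^{-ux}R^{\lambda,k}_x(f)(\omega)|$. For $0<k\le 1$ the Perron-type formula \eqref{balkon} represents $e^{-ux}R^{\lambda,k}_x(f)(\omega)$ as a Fourier transform of $f_\omega\ast P_u/(u+i\cdot)^{1+k}$, whence, by Lemma \ref{lemma1type1},
\[
\sup_{x>0}\big|e^{-ux}R^{\lambda,k}_x(f)(\omega)\big|\;\le\;\frac{\Gamma(k+1)}{\pi}\int_{\R}\frac{|f_\omega(a)|}{|u+ia|^{1+k}}\,da .
\]
Here the integrable majorant $|u+ia|^{-(1+k)}$ lives on $\R$, not on $G$, and is applied to the restriction $f_\omega$; Minkowski's integral inequality together with the $\beta$-invariance of the Haar measure then yields the $L_p$ bound with constant $\pi^{-1}\Gamma(k+1)/k$, independent of $p$ and $\lambda$. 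The case $k>1$ is reduced to $0<k'\le1$ via \eqref{gammaidentity}. Your closing intuition about comparing with the Poisson kernel on $\R$ points in this direction, but the decisive structural choice — dominating by a flow-convolution $\int_\R|f_\omega(a)|K(a)\,da$ rather than a group-convolution $|f|\ast\Phi$ — is missing, and without it the argument cannot be completed.
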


 Compared with Theorem \ref{maximalineqtypeI} the relevant part of this result is the case $ p=1$. But also the case  of arbitrary $p$'s seems interesting since in the above inequality
the constant $C=C(u,k)$  does not depend on $p$.

\medskip

Finally, we discuss Riesz summation of Dirichlet series. We use the 'bridge' from Lemma \ref{hedesaks} to rephrase  Corollary \ref{almosteverywheremaintheo} in terms of the summability of almost all vertical limits of $\mathcal{H}_1$-Dirichlet series by first Riesz means almost everywhere  on the imaginary axis.

\begin{Coro} \label{Dirichletseries} Let $\lambda$ be a frequency, $k >0$,  $f\in H_{1}^{\lambda}(G)$,
and $D$ its associated Dirichlet series in $\mathcal{H}_1(\lambda)$.  Then
there is a null set $N \subset G$ such that for all $\omega \notin N$
\begin{equation} \label{coro1.3zero}
\lim_{x\to\infty} R_{x}^{\lambda,k}(D^{\omega})(0) = f(\omega)\,,
\end{equation}
\begin{equation} \label{coro1.3zeroo}
\lim_{x\to\infty} R_{x}^{\lambda,k}(D^{\omega})(it)=f_{\omega}(t)\,\,\, \text{ for almost all $t \in \R$ },
\end{equation}
and
\begin{equation} \label{coro1.3pos}
\lim_{x\to \infty} R_{x}^{\lambda,k}(D^{\omega})(u+it)=f_{\omega}*P_{u}(t)\,\,\,
\text{ for all $u>0$ and  almost all $t \in \R$. }
\end{equation}
\end{Coro}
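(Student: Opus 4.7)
My plan is to reduce all three assertions to Corollary~\ref{almosteverywheremaintheo} via the key identity
\begin{equation*}
R_x^{\lambda,k}(D^\omega)(u+it) \,=\, R_x^{\lambda,k}(f*p_u)(\omega\beta(t)), \qquad u \ge 0,\; t \in \R,\; x > 0,
\end{equation*}
with the convention $f*p_0 := f$. This identity follows at once from the relations $h_{\lambda_n}(\omega)e^{-i\lambda_n t} = h_{\lambda_n}(\omega\beta(t))$ and $\widehat{f*p_u}(h_{\lambda_n}) = \widehat{f}(h_{\lambda_n})e^{-u\lambda_n}$. Corollary~\ref{almosteverywheremaintheo} supplies a null set $N_0 \subset G$ such that for every $\omega' \notin N_0$ and every $u \ge 0$ one has $\lim_x R_x^{\lambda,k}(f*p_u)(\omega') = (f*p_u)(\omega')$. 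Setting $\omega' = \omega$ and $u = t = 0$ in the identity above immediately yields \eqref{coro1.3zero}.

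For \eqref{coro1.3zeroo} and \eqref{coro1.3pos} the task becomes to replace the test point $\omega'$ by $\omega\beta(t)$: I need to enlarge $N_0$ to a null set $N \subset G$ such that whenever $\omega \notin N$, the set $\{t \in \R : \omega\beta(t) \in N_0\}$ is Lebesgue null. I would carry out this transfer by a Fubini argument on $G \times \R$. For each fixed $t$ the translate $N_0\beta(-t)$ has Haar measure zero, so $\int_G 1_{N_0}(\omega\beta(t))\,d\omega = 0$ for every $t$; since $(\omega, t) \mapsto \omega\beta(t)$ is continuous and $N_0$ may be chosen Borel, the integrand is jointly Borel and Fubini then yields $\int_\R 1_{N_0}(\omega\beta(t))\,dt = 0$ for almost every $\omega$. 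This is essentially the restriction-to-$\R$ mechanism underlying Lemma~\ref{hedesaks}. Combining with the previous step, for every $\omega$ outside the resulting null set $N$ and almost every $t \in \R$, one obtains $\lim_x R_x^{\lambda,k}(D^\omega)(u+it) = (f*p_u)(\omega\beta(t))$ simultaneously for all $u \ge 0$.

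It remains to identify the right-hand sides. By the very definition of the restriction one has $f(\omega\beta(t)) = f_\omega(t)$, giving \eqref{coro1.3zeroo}, while the equality $(f*p_u)(\omega\beta(t)) = (f*p_u)_\omega(t) = (f_\omega * P_u)(t)$ is already contained in Theorem~\ref{Landau} and reflects the defining property of $p_u$ as the push-forward of $P_u(t)\,dt$ under $\beta$; this yields \eqref{coro1.3pos}. The main obstacle I foresee is the joint measurability of $(\omega, t) \mapsto 1_{N_0}(\omega\beta(t))$ needed for Fubini: this can be arranged by choosing $N_0$ as a Borel null set carved out using the maximal operator $R_{\max}^{\lambda,k}$ of Theorem~\ref{maximalineqtypeI} applied to $f$ and to $f*p_u$ along a countable dense family of $u > 0$.
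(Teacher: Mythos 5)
Your proposal is correct and follows essentially the same route as the paper: the paper's proof is literally ``translate Corollary~\ref{almosteverywheremaintheo} with the help of Lemma~\ref{hedesaks}'', and your identity $R_x^{\lambda,k}(D^\omega)(u+it)=R_x^{\lambda,k}(f*p_u)(\omega\beta(t))$ together with the Fubini transfer of the null set $N_0$ is exactly the mechanism in the proof of Lemma~\ref{hedesaks} (which there uses the finite measure $(1+t^2)^{-1}dt$ on $\R$ to make the Fubini step painless, though Tonelli with Lebesgue measure works just as well for your indicator function). The only point worth noting is that you spell out explicitly the simultaneity in $u$ coming from \eqref{conv2}, which the paper leaves implicit.
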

Note that by Theorem \ref{Landau} the case $k=0$ in \eqref{coro1.3pos} is known, if $\lambda$ satisfies $(LC)$.

\medskip

In view of \eqref{internal} the following maximal inequality may be considered as an
internal variant of Theorem \ref{maxitranslatedtypeI}.

\begin{Theo} \label{maxitransinternaltype1}
Let $u,k>0$. Then there is a constant $C=C(u,k)$ such that for all $\lambda$, $1\le p <\infty$, and $D=\sum a_{n}e^{-\lambda_{n}s}\in \mathcal{H}_{p}(\lambda)$ we for almost all $\omega \in G$ have
\begin{equation*}
\lim_{T\to \infty} \Big( \frac{1}{2T} \int_{-T}^{T} \sup_{x>0} \Big| \sum_{\lambda_{n}<x} a_{n} h_{\lambda_n}(\omega) \Big(1-\frac{\lambda_{n}}{x}\Big)^{k}e^{-\lambda_{n}(u+it)} \Big|^{p} dt \Big)^{\frac{1}{p}} \le C\|D\|_{p}.
\end{equation*}
\end{Theo}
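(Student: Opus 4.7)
The plan is to deduce this ``internal'' maximal inequality from its ``external'' counterpart Theorem~\ref{maxitranslatedtypeI} by applying Birkhoff's pointwise ergodic theorem to the $\R$-flow $t\mapsto \omega\beta(t)$ on $(G,m)$. Fix a $\lambda$-Dirichlet group $(G,\beta)$, and let $f\in H_p^\lambda(G)$ be the function associated to $D$ under the Bohr transform~\eqref{Bohrmap}, so that $a_n=\widehat{f}(h_{\lambda_n})$ and $\|f\|_p=\|D\|_p$. Since each $h_{\lambda_n}$ is a character of $G$ with $h_{\lambda_n}\circ\beta=e^{-i\lambda_n\cdot}$, we have $h_{\lambda_n}(\omega\beta(t))=h_{\lambda_n}(\omega)e^{-it\lambda_n}$, and therefore
\[
\sup_{x>0}\Big|\sum_{\lambda_n<x}a_n h_{\lambda_n}(\omega)\big(1-\tfrac{\lambda_n}{x}\big)^k e^{-\lambda_n(u+it)}\Big|
=R_{\max}^{\lambda,k}(f\ast p_u)(\omega\beta(t)).
\]
Define $F\colon G\to[0,\infty]$ by $F(\eta):=\big(R_{\max}^{\lambda,k}(f\ast p_u)(\eta)\big)^p$. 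Theorem~\ref{maxitranslatedtypeI} applied to the full range $1\le p<\infty$ then gives $F\in L_1(G)$ with $\int_G F\,dm\le C(u,k)^p\|D\|_p^p$, where $C(u,k)$ is the constant from that theorem.

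The second step is to apply Birkhoff's pointwise ergodic theorem to the Haar-measure-preserving $\R$-action $(t,\omega)\mapsto\omega\beta(t)$ with the integrable function $F$. This requires ergodicity of the flow, which follows from a standard character argument: any $\gamma\in\widehat{G}$ fixed by the flow satisfies $\gamma\circ\beta\equiv 1$, and since $\beta(\R)$ is dense in $G$ and $\gamma$ is continuous, this forces $\gamma=1$; hence the only $L_2$-invariants are constants. Birkhoff's theorem then yields, for $m$-almost every $\omega\in G$,
\[
\lim_{T\to\infty}\frac{1}{2T}\int_{-T}^{T}F(\omega\beta(t))\,dt=\int_G F\,dm\le C(u,k)^p\|D\|_p^p.
\]
Taking $p$-th roots produces the desired inequality with constant $C:=C(u,k)$.

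The main obstacle, such as it is, lies in the first step: one needs Theorem~\ref{maxitranslatedtypeI} to deliver genuine $L_p$-integrability of the full maximal function $R_{\max}^{\lambda,k}(f\ast p_u)$ on $G$, including the delicate endpoint $p=1$, and with a constant independent of $p$. That this works uniformly in $p\in[1,\infty)$ is precisely the content of that theorem, and it is what allows the extra factor $e^{-u\lambda_n}$ (i.e.\ the horizontal translation by $u>0$) to absorb the loss one would otherwise see at $p=1$. Once this external $L_p$-bound is in hand, the ergodic theorem transports it along vertical lines into the internal Besicovitch-type bound claimed in the statement, with no further quantitative work needed.
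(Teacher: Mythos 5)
Your proof is correct and follows essentially the same route as the paper: the paper's entire proof reads ``Combine \eqref{besiconorm2} from Corollary \ref{differentiation} with Theorem \ref{maxitranslatedtypeI}'', and \eqref{besiconorm2} is exactly the almost-everywhere identity $\lim_{T\to\infty}\frac{1}{2T}\int_{-T}^{T}F_\omega(t)\,dt=\widehat{F}(0)=\int_G F\,dm$ that you obtain for the integrable function $F=\big(R_{\max}^{\lambda,k}(f\ast p_u)\big)^p$. The only difference is that you import Birkhoff's pointwise ergodic theorem (with a correct ergodicity argument via invariant characters and the density of $\beta(\R)$), whereas the paper derives the same fact internally from its Hardy--Littlewood maximal operator $\overline{M}$ together with the density argument of Lemma \ref{egoroff2}; both are legitimate, and yours trades self-containedness for a shorter appeal to a classical theorem.
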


All proofs of this section are given in the Sections \ref{reductionsubsection}, \ref{mainproof},  \ref{mainproof2},
and  \ref{translatedsubsection}.

\medskip

\subsection{Failure of pointwise approximation by second Riesz means}  \label{secondRieszmeans}
Let $\lambda$ be a frequency, $k>0$, and $(G,\beta)$ a $\lambda$-Dirichlet group.
From Corollary \ref{almosteverywheremaintheo} we know that the Fourier series of each $f \in H_{1}^\lambda(G)$
is $(\lambda, k)$-Riesz summable with limit $f$, i.e. $f$ is almost everywhere  approximable by its first Riesz means $R^{\lambda,k}_x(f)$.  Let us discuss whether in view of Proposition \ref{MarcosgeburtstagX}, (2)  the Fourier series of each $f \in H_{1}^\lambda(G)$ is even $(e^\lambda,k)$-Riesz summable with limit $f$,  i.e. $f$  is almost
everywhere approximable by its second Riesz means $S^{\lambda,k}_x(f)$ (see \eqref{turmalet2}).

\medskip

For $\lambda = (n)$  and the $\lambda$-Dirichlet group $(\T, \beta_{\T} )$
we know that the Fourier series of  each $f \in H_1^{\lambda} (\T)=H_1^{\lambda} (\T)$ is Ces\'aro summable almost everywhere with limit $f$, i.e.
it is $(\lambda, 1)$-Riesz summable with limit $f$ (as mentioned this is a particular case of
Corollary \ref{almosteverywheremaintheo}). But it is definitely not $(e^\lambda,1)$-Riesz summable since otherwise
by Proposition \ref{alaphilipX} each $f\in H_1(\T)$ would have an almost everywhere convergent Fourier series.

\medskip

On the other hand look at  $\lambda = (\log n)$  and the $\lambda$-Dirichlet group $(\T^\infty, \beta_{\T^\infty})$.
We have already mentioned Corollary \ref{pino} which shows that the Fourier series of every
 $f \in H_1(\T^\infty) = H_1^\lambda(\T^\infty)$ is $(\lambda,1)$-Riesz summable with limit $f$. Assume that the Fourier series of each such
 $f$ may even  be almost everywhere
 $(e^\lambda, 1)$-summable, i.e. it
 is almost everywhere on $\T^\infty$ approximable  by its second Riesz means:

\[
f = \lim_{x\to \infty}S_{x}^{\lambda,k}(f)
= \lim_{x\to \infty} \sum_{\mathfrak{p}^\alpha < e^x} \hat{f}(\alpha) \big(1-\frac{\mathfrak{p}^\alpha}{e^x}\big) z^\alpha
=  \lim_{x\to \infty} \sum_{\mathfrak{p}^\alpha < x} \hat{f}(\alpha) \big(1-\frac{\mathfrak{p}^\alpha}{x}\big) z^\alpha
\,.
\]

Then for every $f \in H_1(\T) \subset H_1(\T^\infty) $ almost everywhere on $\T$
\[
f
=  \lim_{x\to \infty} \sum_{2^j < x} \hat{f}(j) \big(1-\frac{2^j}{x}\big) z^j
\,.
\]
In other words the Fourier series of every $f \in H_1(\T)$ is almost everywhere $( (2^j), 1)$-Riesz summable
with limit $f$. By Proposition \ref{MarcosgeburtstagX}, this implies that each such Fourier series
is almost everywhere summable on $\T$, a contradiction.

\medskip

We collect the preceding information in the following

\begin{Rema} \label{bardet}
Given a frequency $\lambda$ and a $\lambda$-Dirichlet group $(G, \beta)$, it is in general not true,
that each $f \in H_1^\lambda(G)$ has an almost everywhere $(e^\lambda,1)$-summable Fourier series.
Counterexamples are $\lambda = (n)$ and $(\T, \beta_\T)$, as well as $\lambda = (\log n)$  and $(\T^\infty, \beta_{\T^\infty})$.
\end{Rema}

Corollary  \ref{Dirichletseries} shows that, given a $\lambda$-Dirichlet group $(G, \beta)$ and $k >0$, almost all vertical limits $D^\omega$ of a Dirichlet series $D \in \mathcal{H}_1(\lambda)$ are $(\lambda,k)$-Riesz summable
almost everywhere on the vertical line. Are they even $(e^\lambda,k)$-Riesz summable? Remark \ref{bardet} and
Lemma \ref{hedesaks} show that the answer in general in no.

\medskip

But something can be saved. By Corollary  \ref{Dirichletseries} and Proposition \ref{MarcosgeburtstagY}  almost all vertical limits of Dirichlet series $D\in \mathcal{H}_{1}(\lambda)$ are $(\lambda,k)$-Riesz summable on $[Re >0]$, and so
$(e^\lambda,k)$-Riesz summable  on $[Re>0]$. As a consequence of Theorem \ref{maxitranslatedtypeI} we are going to deduce the following quantitative version of this fact.

\begin{Theo} \label{maxitranslatedtypeII}
Let $u,k>0$. Then there is a constant $C=C(u,k)$ such that for all $\lambda$, $1\le p <\infty$ and $f \in H_{p}^{\lambda}(G)$ we have
$$\left(\int_{G}\sup_{x>0} \Big| \sum_{\lambda_{n}<x}  \widehat{f}(h_{\lambda_{n}}) e^{-u\lambda_{n}} \Big(1-e^{\lambda_{n}-x}\Big)^{k} h_{\lambda_n}(\omega) \Big|^{p} d\omega\right)^{\frac{1}{p}}
\le C \|f\|_{p}.$$
\end{Theo}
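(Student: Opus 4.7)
The expression inside the supremum is precisely the second $(\lambda,k)$-Riesz mean $S_x^{\lambda,k}(f \ast p_u)(\omega)$ of the Hardy-space function $f \ast p_u$, in the notation of Section~\ref{RieszMeans}. The plan is to reduce Theorem~\ref{maxitranslatedtypeII} to Theorem~\ref{maxitranslatedtypeI} via a pointwise weight comparison combined with a convolution factorization.

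The first step is the elementary weight inequality
\[
e^{-u\lambda}(1-e^{\lambda-x})^k \le C(u,k)(1-\lambda/x)^k \qquad (0 \le \lambda < x),
\]
with $C(u,k) = \max\bigl(2^k,\,(2k/u)^k e^{-k}\bigr)$. For $\lambda \le x/2$ one has $(1-e^{\lambda-x})^k \le 1 \le 2^k (1-\lambda/x)^k$. For $\lambda > x/2$ one uses $(1-e^{\lambda-x})^k \le (x-\lambda)^k = x^k(1-\lambda/x)^k$ together with $e^{-u\lambda} \le e^{-ux/2}$ and $\sup_{x>0} x^k e^{-ux/2} = (2k/u)^k e^{-k}$. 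Splitting the Poisson measure $p_u = p_{u/2} \ast p_{u/2}$ and factorizing on the Fourier side then yields the convolution identity
\[
S_x^{\lambda,k}(f \ast p_u) = R_x^{\lambda,k}(f \ast p_{u/2}) \ast L_x,
\]
where $L_x$ is the trigonometric polynomial on $G$ with $\widehat{L_x}(h_{\lambda_n}) = e^{-(u/2)\lambda_n}(1-e^{\lambda_n-x})^k(1-\lambda_n/x)^{-k}$ for $\lambda_n < x$ and zero on every other character. By the weight estimate, these Fourier coefficients are uniformly bounded by $C(u/2,k)$.

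If one can produce an integrable pointwise majorant $P \in L_1(G)$ with $|L_x| \le P$ for all $x > 0$, the proof is then finished by a standard convolution/maximal argument: pointwise on $G$,
\[
\sup_{x>0}|S_x^{\lambda,k}(f \ast p_u)(\omega)| \le \bigl(R_{\max}^{\lambda,k}(f \ast p_{u/2}) \ast P\bigr)(\omega),
\]
and Young's inequality together with Theorem~\ref{maxitranslatedtypeI} applied to $f \ast p_{u/2}$ delivers the $L_p$-bound with constant $\|P\|_1 \cdot C(u/2,k)$. The principal technical obstacle is the construction of $P$: uniform pointwise boundedness of Fourier coefficients does not on its own imply pointwise boundedness of a trigonometric polynomial, since the $L_1(G)$-norm of such a polynomial can be unboundedly large. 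The remedy is to absorb an additional Poisson factor by writing $L_x = p_{u/4} \ast M_x$, where $M_x$ has Fourier coefficients $e^{-(u/4)\lambda_n}(1-e^{\lambda_n-x})^k(1-\lambda_n/x)^{-k}$; one then shows, via a summability-kernel analysis parallel to the one underpinning the proof of Theorem~\ref{maxitranslatedtypeI}, that $\sup_{x>0}\|M_x\|_{L_\infty(G)} < \infty$. This yields $|L_x| \le \|M_x\|_\infty \cdot p_{u/4} \le C' p_{u/4}$ pointwise, so the choice $P = C' p_{u/4}$ works and the reduction to Theorem~\ref{maxitranslatedtypeI} is complete.
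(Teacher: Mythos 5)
Your setup is fine as far as it goes: the weight inequality $e^{-u\lambda}(1-e^{\lambda-x})^k\le C(u,k)(1-\lambda/x)^k$ is correct, and the Fourier-side factorization $S_x^{\lambda,k}(f\ast p_u)=R_x^{\lambda,k}(f\ast p_{u/2})\ast L_x$ is consistent. But the argument breaks exactly at the step you flag as the principal obstacle: the claim $\sup_{x>0}\|M_x\|_{L_\infty(G)}<\infty$ is false in general. All Fourier coefficients of $M_x$ are positive, and every character takes the value $1$ at the identity $e\in G$, so
\begin{equation*}
\|M_x\|_{\infty}\ \ge\ M_x(e)\ =\ \sum_{\lambda_n<x}e^{-(u/4)\lambda_n}\Big(\tfrac{1-e^{\lambda_n-x}}{1-\lambda_n/x}\Big)^{k}\ \ge\ (1-e^{-1})^{k}\sum_{\lambda_n<x/2}e^{-(u/4)\lambda_n}
\end{equation*}
for $x\ge 2$, since the ratio is $\ge 1-e^{-1}$ whenever $\lambda_n\le x/2$. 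The right-hand side tends to $\infty$ with $x$ whenever $\sum_n e^{-(u/4)\lambda_n}=\infty$, which already happens in the ordinary case $\lambda=(\log n)$ for $u\le 4$ (the sum is $\sum n^{-u/4}$). The same computation shows $\|L_x\|_\infty\to\infty$ as well, so no splitting $L_x=p_v\ast M_x$ with $v<u/2$ can save the construction; uniform boundedness of the Fourier coefficients (which is all the weight inequality gives) says nothing about pointwise or $L_\infty$ control of the kernels. Whether the family $(L_x)$ admits \emph{some} common integrable majorant is far from clear, and the discussion surrounding Remark~\ref{approxsecondmeansnorm} and Lemma~\ref{measures} (non-existence of uniformly bounded measures realizing the second-Riesz multipliers) should make you suspicious of any fixed-length multiplier comparison of $S_x$ with $R_x$.

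The paper takes a structurally different route that avoids this issue: it proves a purely scalar Abel-summation inequality (Lemma~\ref{lemmasecond}) stating that for $0<k\le 1$, $u,\varepsilon>0$ and arbitrary complex $(a_n)$,
\begin{equation*}
\Big|\sum_{\lambda_{n}<x} a_{n}e^{-(u+\varepsilon)\lambda_{n}}(1-e^{\lambda_{n}-x})^{k} \Big| \le C
\sup_{0<y\leq x} \Big| e^{-uy}\sum_{\lambda_{n}<y} a_{n}\Big(1-\frac{\lambda_{n}}{y}\Big)^{k} \Big|,
\end{equation*}
proved via the Hardy--Riesz integral identities, partial summation and the second mean value theorem. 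The crucial feature is that the second Riesz mean of length $x$ is controlled by a \emph{supremum over all lengths} $y\le x$ of exponentially damped first Riesz means, not by the single length $x$; applied pointwise with $a_n=\widehat f(h_{\lambda_n})h_{\lambda_n}(\omega)$ and combined with the intermediate estimate \eqref{meet} from the proof of Theorem~\ref{maxitranslatedtypeI}, this yields the theorem. To repair your argument you would need a comparison of this maximal-versus-maximal type rather than a convolution identity at fixed $x$.
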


One could be tempted to believe that  this  maximal inequality is  an immediate consequence of  Theorem \ref{maxitranslatedtypeI} (applying it to $e^\lambda$ instead of $\lambda$). But this is not true since we here
consider functions in
$H_{p}^{\lambda}(G)$ and not  $H_{p}^{e^\lambda}(G)$.
The proof  is given in Section \ref{translatedsubsection}.

\medskip

\subsection{A Hardy-Littlewood maximal operator} \label{firstRieszmeans}
One of the central tools needed for the proof of Theorem \ref{maximalineqtypeI} is given a Hardy-Littlewood maximal operator adapted to Fourier analysis  on Dirichlet groups. If $f \in L_{1}(G)$, where $(G, \beta)$ is any Dirichlet group, then we define
\begin{equation} \label{maximaloperatordef}
\overline{M}(f)(\omega):=\sup_{I\subset \R} \frac{1}{|I|}\int_{I} |f_{\omega}(t)| ~dt;
\end{equation}
here  $I$ stands for any  interval in $\R$ and $|I|$ for its  Lebesgue measure. Recall that $f_{\omega}$ for almost all $\omega \in G$ is a locally integrable function on $\R$ with $f_{\omega}(t)=f(\omega\beta(t))$ for almost all $t\in \R$, and so $\overline{M}(f)(\omega)$ is defined almost everywhere.

\medskip

Note that the definition of $\overline{M}$ actually depends on the choice of the Dirichlet group $(G,\beta)$,
although for simplicity our notation does not indicate this. Moreover observe that \eqref{maximaloperatordef}  for  $(\T,\beta_{\T})$ precisely gives the classical Hardy-Littlewood maximal operator on $\T$ (see e.g. \cite[(3), p. 241]{Rudin}).
\begin{Theo} \label{HLoperator} Let $(G, \beta)$ be a Dirichlet group. Then the sublinear operator $\overline{M}$ is bounded from $L_{1}(G)$ to $L_{1,\infty}(G)$ and from $L_{p}(G)$ to $L_{p}(G)$, whenever $1<p\le \infty.$
\end{Theo}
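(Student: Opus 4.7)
The approach is a Calder\'on-style transference argument lifting the classical one-variable Hardy--Littlewood weak-type $(1,1)$ maximal inequality from $\R$ to $G$. The two structural features enabling the lift are (i) the Haar invariance of $G$ under each translation $\omega \mapsto \omega\beta(s)$, and (ii) the cocycle identity
\[
f_{\omega\beta(s)}(t) = f_\omega(s+t),
\]
valid for a.e.\ $\omega \in G$ and a.e.\ $s,t \in \R$, which realizes Euclidean translation on $\R$ as the group action on $G$. Together these identify $\overline{M}(f)$ evaluated at $\omega\beta(s)$ with the classical Euclidean maximal function $Mf_\omega(s)$ up to null sets, the device that will let me pass a Euclidean estimate into a group estimate.

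For the weak-type bound on $L_{1}(G)$ I fix $\alpha>0$ and truncate the defining supremum to intervals of length at most $N$, denoting this operator $\overline{M}_N$, so that $\overline{M}_N \uparrow \overline{M}$ as $N \to \infty$. For any $T>0$ and a.e.\ $\omega \in G$, the classical one-variable weak-type $(1,1)$ inequality applied to $f_\omega \cdot \mathbf{1}_{[-T-N,T+N]}$ yields
\[
\big|\{\, s \in [-T,T] : \overline{M}_N(f)(\omega\beta(s)) > \alpha\,\}\big| \le \frac{C_{HL}}{\alpha}\int_{-T-N}^{T+N} |f_\omega(r)|\,dr.
\]
Integrating in $\omega$ against Haar measure and applying Fubini together with Haar invariance, the right-hand side becomes $\tfrac{C_{HL}}{\alpha}\cdot 2(T+N)\,\|f\|_{1}$ (from $\int_G |f_\omega(r)|\,dm(\omega) = \|f\|_{1}$ for each $r$), while the left-hand side becomes $2T\cdot m(\{\overline{M}_N f > \alpha\})$ by the same invariance applied to the defining set. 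Dividing by $2T$ and sending $T \to \infty$ kills the $N/T$ boundary term; monotone convergence as $N \to \infty$ then delivers the weak-$(1,1)$ bound $\overline{M}\colon L_{1}(G) \to L_{1,\infty}(G)$.

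The $L_{\infty}\to L_{\infty}$ bound $\overline{M}(f)(\omega) \le \|f\|_{\infty}$ is immediate from the definition, and Marcinkiewicz interpolation between the $(1,\infty)$ and $(\infty,\infty)$ endpoints then gives the strong $L_{p} \to L_{p}$ bounds for $1 < p < \infty$. The main obstacle will be the measure-theoretic bookkeeping: selecting a jointly measurable representative of $(\omega,t)\mapsto f_\omega(t)$, verifying the cocycle identity off a Fubini null set, and confirming Haar-measurability of $\omega \mapsto \overline{M}_N(f)(\omega)$ by rewriting the supremum over intervals as a countable one over rational endpoints and exploiting the continuity of $\beta$. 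Once these technicalities are dispatched, the transference argument is essentially automatic; the novelty lies not in the classical Hardy--Littlewood inequality itself, but in lifting it along the dense homomorphism $\beta\colon \R \to G$ using only the Haar invariance and the translation cocycle.
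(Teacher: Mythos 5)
Your proposal is correct and follows essentially the same route as the paper: both average the level set of the (truncated) maximal operator along the flow $t\mapsto\omega\beta(t)$ using Haar invariance and Fubini, reduce matters to a one-dimensional Hardy--Littlewood estimate applied to the orbit functions $f_\omega$, and finish with the trivial $L_\infty$ bound and Marcinkiewicz interpolation. The only cosmetic differences are that the paper re-derives the orbit estimate by hand via the Vitali covering lemma on a fixed window $[-2A,2A]$ (obtaining the explicit constant $10$) rather than citing the classical weak-$(1,1)$ inequality, and it absorbs the boundary term by doubling the window instead of letting $T\to\infty$.
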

In Section \ref{reductionsubsection} we see  that the proof of Theorem \ref{maximalineqtypeI} reduces to Theorem \ref{HLoperator}.
The following corollary is a  consequence of independent interest.
\begin{Coro} \label{differentiation} Let $f \in L_{1}(G)$. Then for almost all $\omega \in G$ we have
\begin{equation} \label{difff}
\lim_{T\to 0} \frac{1}{2T} \int_{-T}^{T} f_{\omega}(t) dt =f(\omega),
\end{equation}
and
\begin{equation} \label{besiconorm2}
\lim_{T\to \infty} \frac{1}{2T} \int_{-T}^{T} f_{\omega}(t)~ dt=\widehat{f}(0).
\end{equation}
\end{Coro}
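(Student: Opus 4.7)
The plan is to reduce both statements to the corresponding assertions for a dense subclass of $L_1(G)$ and then transfer to arbitrary $f\in L_1(G)$ by combining density with the weak-type $(1,\infty)$ bound for $\overline{M}$ provided by Theorem~\ref{HLoperator}. For (\ref{difff}) the natural dense class is $C(G)$; for (\ref{besiconorm2}) it is the algebra of trigonometric polynomials on $G$ (finite linear combinations of characters $h_{x}$).

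For (\ref{difff}) I would first observe that if $g\in C(G)$, then for every $\omega\in G$ the function $t\mapsto g(\omega\beta(t))$ is continuous on $\R$, so the averages $\frac{1}{2T}\int_{-T}^{T}g(\omega\beta(t))\,dt$ converge to $g(\omega)$ as $T\to 0^{+}$, everywhere. For general $f\in L_1(G)$ and any $\varepsilon>0$, pick $g\in C(G)$ with $\|f-g\|_{1}<\varepsilon^{2}$ and write
\[
\Lambda f(\omega):=\limsup_{T\to 0^{+}}\Big|\tfrac{1}{2T}\int_{-T}^{T} f_{\omega}(t)\,dt-f(\omega)\Big|.
\]
Since $\Lambda g=0$ pointwise and $\Lambda$ is subadditive, the triangle inequality yields $\Lambda f\le \overline{M}(f-g)+|f-g|$ almost everywhere. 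Theorem~\ref{HLoperator} together with the elementary Markov estimate then gives $m\{\Lambda f>\varepsilon\}\lesssim \|f-g\|_{1}/\varepsilon<C\varepsilon$, and letting $\varepsilon\to 0$ produces a null set outside of which the limit in (\ref{difff}) holds.

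For (\ref{besiconorm2}) I would handle trigonometric polynomials first. Using $h_{x}\circ\beta=e^{-ix\pmb{\cdot}}$, for each character $h_{x}$ with $x\ne 0$,
\[
\frac{1}{2T}\int_{-T}^{T} h_{x}(\omega\beta(t))\,dt= h_{x}(\omega)\,\frac{\sin(xT)}{xT}\xrightarrow[T\to\infty]{}0,
\]
while for $x=0$ the integrand is identically $1=\widehat{h_{0}}(0)$. By linearity the statement (\ref{besiconorm2}) holds pointwise on $G$ for every trigonometric polynomial, with $\widehat{p}(0)=\int_{G}p\,dm$ being the constant coefficient. Since trigonometric polynomials are dense in $L_1(G)$ by Stone--Weierstrass, the same density argument as before applies once one notices that the tail operator $\sup_{T\ge 1}\tfrac{1}{2T}\big|\int_{-T}^{T}(f-p)_{\omega}(t)\,dt\big|$ is pointwise dominated by $\overline{M}(f-p)(\omega)$ and that $|\widehat{f}(0)-\widehat{p}(0)|\le\|f-p\|_{1}$; together with Theorem~\ref{HLoperator} this gives the usual $\varepsilon$-estimate on the exceptional set.

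The only subtlety I anticipate is bookkeeping of null sets: by Lemma~3.11 of \cite{DefantSchoolmann1} the restriction $f_{\omega}$ is a well-defined locally integrable function on $\R$ only for $\omega$ outside some $m$-null set $N_{f}$, and similarly for each approximant $g_{n}$. This is harmless, since one only needs countably many approximants $g_{n}$ with $\|f-g_{n}\|_{1}\to 0$ fast enough and can replace $N_{f}$ by the countable union $N_{f}\cup\bigcup_{n}N_{g_{n}}$, still a null set. No genuine obstacle remains; the heart of the proof is really just Theorem~\ref{HLoperator} played against the two obvious dense classes.
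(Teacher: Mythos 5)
Your proposal is correct and follows essentially the same route as the paper: verify both limits on a dense class and then transfer to all of $L_1(G)$ via the weak-type $(1,\infty)$ bound for $\overline{M}$ from Theorem~\ref{HLoperator}, noting that the relevant averaging operators are pointwise dominated by $\overline{M}$. The only cosmetic differences are that you use $C(G)$ instead of trigonometric polynomials for \eqref{difff} and write out the standard $\varepsilon$-argument explicitly, whereas the paper checks both identities on polynomials and invokes its abstract Lemma~\ref{egoroff2}, which encapsulates exactly the density argument you spell out.
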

Note that \eqref{difff} may be interpret as a 'differentiation theorem' for integrable functions on Dirichlet groups.
\medskip

Given $1 \leq p < \infty$, recall that the so-called Marcinkiewicz space $\mathcal{M}_{p}(\R)$ of all $f\in L_{loc}^{p}(\R)$ for which  $$\|f\|_{p}:=\limsup_{T\to \infty} \Big(\frac{1}{2T} \int_{-T}^{T} |f(t)|^{p} dt \Big)^{\frac{1}{p}}$$
exists, contains all trigonometric polynomials of the form $q(t):=\sum_{n=1}^{N} a_{n}e^{-itx_{n}}$ with  $x_{n}\in \R$ for $1 \leq n \leq N$, and in this case we have that
$$\|f\|_{p} = \lim_{T\to \infty}\Big(\frac{1}{2T} \int_{-T}^{T} |f(t)|^{p} dt \Big)^{\frac{1}{p}}$$
Then the Besicovitch space $\mathcal{B}_{p}(\R)$ is defined to be the closure of the trigonometric polynomials in $(\mathcal{M}_{p}(\R),\|\cdot\|_{p})$.
By density, we additionally have $\mathcal{B}_{p}(\R)=L_{p}(\overline{\R})$. Moreover, from \cite{DefantSchoolmann1} we know that $L_{p}(G)\subset L_{p}(\overline{\R})$ (isometrically) for all Dirichlet groups $(G,\beta)$, and so $L_{p}(G)\subset \mathcal{B}_{p}(\R)$. Now given $f\in L_{p}(G)$, it seems difficult to determine the corresponding function in $\mathcal{B}_{p}(\R)$. But we are going to deduce from $\eqref{besiconorm2}$ that at least   for almost all $\omega \in G$ the corresponding function for the translations $f(\omega \cdot)$ is given by $f_{\omega}$.
\begin{Rema}
Given $f \in L_{p}(G)$, then $f_{\omega}\in \mathcal{B}_{p}(\R)$ for almost all $\omega \in G$ and
\begin{equation*}
\|f_{\omega}\|_{\mathcal{B}_{p}(\R)}=\lim_{T\to \infty} \Big(\frac{1}{2T} \int_{-T}^{T} |f_{\omega}(t)|^{p}~ dt\Big)^{\frac{1}{p}}=\|f\|_{L_{p}(G)}.
\end{equation*}
\end{Rema}

All proofs of this section are given in Section \ref{HLsection} and Section \ref{mainproof}.

\medskip

\subsection{Norm approximation by first and second Riesz means} \label{approxnormchapter}
From \cite{DefantSchoolmann1} we know that for any frequency $\lambda$ the sequence $(h_{\lambda_{n}})$ forms a Schauder basis for $H_{p}^{\lambda}(G)$, provided $1 < p < \infty$. Note that this
can also be seen as  a straight forward consequence of the  maximal inequality  from  Theorem \ref{Dir-Duy}.

\medskip

Consequently for $1 < p < \infty$ every function from $H_{p}^{\lambda}(G)$ is approximated by its first and second Riesz means with respect to the norm. As shown by the next result, this for $p=1$ and  the first Riesz means is still true.

\begin{Theo} \label{fejer} Let  $f\in H_{1}^{\lambda}(G)$. Then for all $k>0$
\begin{equation} \label{Rieszmeandef}
\lim_{x\to \infty} R^{\lambda,k}_{x}(f)=f,
\end{equation}
the limit taken with respect to the $H_1$-norm.
\end{Theo}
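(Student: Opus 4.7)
The plan is to derive the statement from three ingredients: (a) density of $\lambda$-Dirichlet polynomials in $H_1^\lambda(G)$, (b) the trivial norm-convergence of $R_x^{\lambda,k}(q)$ to $q$ for each polynomial $q$, and (c) uniform $L_1$-boundedness of the family $\{R_x^{\lambda,k}\}_{x>0}$ on $H_1^\lambda(G)$; a standard three-$\varepsilon$ argument then closes the gap. Both (a) and (b) are immediate: (a) follows from the Bohr isometry $\mathcal{B}\colon H_1^\lambda(G) \to \mathcal{H}_1(\lambda)$ combined with the very definition of $\mathcal{H}_1(\lambda)$ as the completion of $(Pol(\lambda),\|\cdot\|_1)$ in \eqref{internal}; and for (b), $R_x^{\lambda,k}(\sum_{n=1}^N b_n h_{\lambda_n}) = \sum_{n=1}^N b_n(1-\lambda_n/x)^k h_{\lambda_n} \to \sum_{n=1}^N b_n h_{\lambda_n}$ uniformly on $G$ as $x \to \infty$.

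The main work lies in (c), namely showing $\sup_{x>0}\|R_x^{\lambda,k}(f)\|_{L_1(G)} \leq C(k)\|f\|_{L_1(G)}$ with $C(k)$ independent of $f$, $x$, $\lambda$, and $(G,\beta)$. My proposed approach is to transfer the problem to $\R$ via the bridge $f_\omega(t) = f(\omega\beta(t))$ and the integral representation
\[
R_x^{\lambda,k}(f)(\omega) = \int_{\R} f_\omega(t)\, V_x^k(t)\, dt \quad \text{for a.e.\ } \omega \in G,
\]
where $V_x^k \in L_1(\R)$ is the symmetric Riesz-Fej\'{e}r kernel whose Fourier transform is the truncated power $(1-|\xi|/x)_+^k$. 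For polynomials $f$ this identity is a direct finite-sum computation using $\widehat{V_x^k}(\lambda_n) = (1-\lambda_n/x)^k$ for $\lambda_n < x$, and the general case follows by density. The key technical fact is that $V_x^k \in L_1(\R)$ with $\|V_x^k\|_{L_1(\R)} = \|V_1^k\|_{L_1(\R)} =: C(k) < \infty$ for \emph{every} $k > 0$: integration by parts shows $V_1^k(t) = O(|t|^{-k-1})$ at infinity, because the naive $1/|t|$-terms in the asymptotic expansion of the inverse Fourier transform of $(1-|\xi|)_+^k$ cancel thanks to the even symmetry of the kernel. Granted these facts, Fubini and the translation-invariance of the Haar measure on $G$ give
\begin{align*}
\|R_x^{\lambda,k}(f)\|_{L_1(G)}
&\leq \int_{\R} |V_x^k(t)| \int_G |f(\omega\beta(t))|\, dm(\omega)\, dt \\
&= \|V_x^k\|_{L_1(\R)} \|f\|_{L_1(G)} = C(k)\|f\|_{L_1(G)}.
\end{align*}

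The main obstacle is rigorously justifying the integral representation in (c) for arbitrary $f \in H_1^\lambda(G)$: one must show that $R_x^{\lambda,k}(f)(\omega) = (f_\omega * V_x^k)(0)$ survives the $L_1$-passage from polynomials. This requires care because $f_\omega$ is only defined almost everywhere on $\R$ and, being essentially an almost periodic function, is not globally integrable---so absolute convergence of the integral hinges on the $O(|t|^{-k-1})$ decay of $V_x^k$ together with a.e.\ control of local averages of $f_\omega$, which is provided by the Hardy-Littlewood maximal function $\overline{M}$ from Theorem~\ref{HLoperator} via the estimate $\int_\R |f_\omega(t)|/(1+|t|)^{k+1}\, dt \leq C_k\, \overline{M}(f)(\omega) < \infty$ for a.e.\ $\omega$. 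Once (c) is secured, the three-$\varepsilon$ conclusion is immediate from (a) and (b).
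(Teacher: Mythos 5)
Your proof is correct, and while it shares the paper's overall skeleton (density of polynomials, trivial convergence of $R_x^{\lambda,k}$ on polynomials, and a uniform bound $\sup_{x>0}\|R_x^{\lambda,k}\|_{L_1\to L_1}<\infty$ closed by a three-$\varepsilon$ argument), it obtains the crucial uniform bound by a genuinely different and more elementary route. The paper derives it from Lemma \ref{measures}: the $p=\infty$ case of the maximal inequality of Theorem \ref{maximalineqtypeI} (which itself rests on the Perron-type representation of Lemma \ref{lemma2type1}, the Poisson-kernel estimate of Lemma \ref{lemma1type1}, and the Hardy--Littlewood operator of Theorem \ref{HLoperator}) gives $\|R_x^{\lambda,k}(f)\|_\infty\le C\|f\|_\infty$, and then Hahn--Banach plus the Riesz representation theorem produce abstract measures $\mu_x\in M(G)$ with $\|\mu_x\|\le C$ and $\widehat{\mu_x}(h_{\lambda_n})=(1-\lambda_n/x)_+^k$, so that $R_x^{\lambda,k}(f)=f*\mu_x$. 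You instead exhibit these measures concretely: your argument amounts to taking $\mu_x=\beta_*(V_x^k\,dt)$, the pushforward of the classical one-dimensional Riesz kernel, whose total variation is $\|V_1^k\|_{L_1(\R)}$ by scaling; the fact that $V_1^k\in L_1(\R)$ for every $k>0$ is the classical Bochner--Riesz-type computation you sketch, and then $\|f*\mu_x\|_1\le\|V_1^k\|_1\|f\|_1$ is immediate. This buys a self-contained proof of Theorem \ref{fejer} (and in fact of Lemma \ref{measures}) that does not pass through the maximal inequality at all, at the price of needing the $L_1$-integrability of the Riesz kernel as an external classical fact; the paper's route is heavier but recycles machinery it must build anyway for the almost-everywhere results. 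Two small remarks: the detour through $\overline{M}$ to justify the integral representation for general $f$ is unnecessary --- since $\mu_x=\beta_*(V_x^k\,dt)$ is a finite measure on $G$, convolution $f\mapsto f*\mu_x$ is automatically a bounded operator on $L_1(G)$ agreeing with the finite-rank operator $R_x^{\lambda,k}$ on the dense set of polynomials, hence everywhere; and for $k>1$ the decay of $V_1^k$ saturates at $O(|t|^{-2})$ (coming from the corner of $(1-|\xi|)^k$ at $\xi=0$) rather than $O(|t|^{-k-1})$, which of course does not affect integrability.
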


But for the frequencies  $\lambda=(n)$ and $\lambda=(\log n)$ this result in general fails
if we replace first Riesz means by second Riesz means. This is  shown exactly as in Remark \ref{bardet} replacing almost everywhere convergence by convergence in the norm.

\begin{Rema} \label{approxsecondmeansnorm}
For  $\lambda=(n)$ and $\lambda=(\log n)$ the Fourier series of some $f\in H_{1}^{\lambda}(G)$ in general is not $(e^\lambda,1)$-Riesz summable in the $H_p$-norm, i.e.  the second $(\lambda,1)$-Riesz means  do not approximate the function in the  $H_p$-norm.
\end{Rema}

The proof of Theorem \ref{fejer} is given in Section \ref{fast}.

\medskip

\subsection{Uniformly almost periodic functions} \label{aap}
Finally, we give an application of Corollary \ref{Dirichletseries} (which follows from  our main  Theorem \ref{maximalineqtypeI}) to uniformly almost periodic functions. We show that $H_{\infty}^{\lambda}(G)$ may be identified    isometrically and 'coefficient preserving'  with the space of all bounded and holomorphic functions on $[Re>0]$, which  are uniformly almost periodic when restricted to any abscissa $[Re=\sigma], \sigma >0$.

\medskip

Before we state the result, let us recall a few basic definitions and facts from the theory of almost periodic functions -- we  refer to \cite[Chapter III]{Besicovitch} for more information.

\medskip

  A continuous function $g:\R \to \C$ is said to be uniformly almost periodic (see \cite[pp.1-2]{Besicovitch})
 if for  every $\varepsilon>0$ there is a number $l>0$ such that for all intervals $I\subset \R$ with $|I|=l$ there is  $\tau \in I$  such that
$$\sup_{ x \in \R} |g(x +\tau)-g(x)|<\varepsilon.$$
Equivalently, a continuous function $g\colon \R \to \C$ is uniformly almost periodic if and only if  it is the uniform limit of trigonometric polynomials of the form $p(t)=\sum_{n=1}^{N} a_{x_{n}} e^{-itx_{n}}$, where $x_{n} \in \R$ (see \cite[$2^\circ$ Theorem, p. 29]{Besicovitch}).

\medskip

A holomorphic  function $F$ defined on an open strip $[\alpha < Re < \beta]$ is said to be   uniformly almost periodic (see \cite[pp.141-142]{Besicovitch})
if for  every $\varepsilon>0$ there is a number $l>0$ such that for all intervals $I\subset \R$ with $|I|=l$ there is  $\tau \in I$  for which
$$\sup_{ z \in [\alpha < Re < \beta]} |F(z +i\tau)-F(z)|<\varepsilon.$$
Obviously, this implies that for every
$\alpha < \sigma < \beta$ each of its restrictions $F_{\sigma}=F(\sigma + i \cdot): \R \to \C$ is uniformly almost periodic.

\medskip

Moreover, in \cite[$4^\circ$ Theorem, p. 142-143]{Besicovitch} the following is proved: If a bounded and holomorphic function $F: [Re >0] \to \C$
for some $\sigma_0>0$
has a uniformly almost periodic restriction $F_{\sigma_{0}}=F(\sigma_0 + i \cdot): \R \to \C$, then it is uniformly almost periodic on every possible smaller strip
$[\alpha_1 <  Re < \beta_1]$  with $0 < \alpha_1 < \beta_1 < \infty$.

\medskip

\medskip Fixing $x \in \R$ and  $\sigma>0$, we call
$$a_{x}(F):=\lim_{T\to \infty} \frac{1}{2T} \int_{-T}^{T} F_{\sigma}(t) e^{-(\sigma+it)x} dt$$
the $x$th Bohr coefficient of $F$; it appears that the definition of $a_{x}$ is independent of the choice of $\sigma$ (see \cite[p. 147]{Besicovitch}). Moreover, it can be shown, that only at most countable many Bohr coefficients are non zero, and that $F$ vanishes, whenever its Bohr coefficients vanish (see \cite[p. 148 and p. 18]{Besicovitch}).
\medskip

 Note that finite sums of the form $F(z):=\sum_{n=1}^{N} a_{n} e^{-\lambda_{n}z}$ (all coefficients $\neq 0$) are typical examples of holomorphic, uniformly  almost periodic functions, and then the coefficients $a_{k},
  \, 1 \leq k \leq N,$ are precisely the (non-zero) Bohr coefficients of $F$.

\begin{Defi} Given a frequency $\lambda$, define  $\mathcal{H}_{\infty}^{\lambda}[Re>0]$ to be the space of all bounded, holomorphic functions $F\colon [Re>0] \to \C$, which are uniformly almost periodic on all (or equivalently some) abscissa $[Re=\sigma], \sigma>0$ and for which the Bohr coefficients $a_{x}(F)$ vanish whenever $x \notin \lambda$.
\end{Defi}
Together with $\|F\|_{\infty}:=\sup_{[Re>0]} |F(z)|$ the space $\mathcal{H}_{\infty}^{\lambda}[Re>0]$ becomes a Banach space, and we have $|a_{x}(F)|\le \|F\|_{\infty}$ for all $x \in \R$.

\medskip

Recall that, by \cite{DefantSchoolmann1}, if $\lambda$ satisfies $(LC)$, then the space $H_{\infty}^{\lambda}(G)$ equals $\mathcal{D}_{\infty}(\lambda)$, which is the space of all $\lambda$-Dirichlet series $D=\sum a_{n}e^{-\lambda_{n}s}$ converging and defining a bounded function on $[Re>0]$.  As proved in  \cite{Schoolmann1}, such Dirichlet series converge uniformly on ever half-plane $[Re>0], \sigma >0$, which implies  that their  limit functions  belong to $\mathcal{H}_{\infty}^{\lambda}[Re>0]$, i.e. under $(LC)$ the Banach space $H_{\infty}^{\lambda}(G)$ embeds isometrically into $\mathcal{H}_{\infty}^{\lambda}[Re>0]$. Using Theorem \ref{maximalineqtypeI}, we can show much more -- for any  $\lambda$ both spaces may be identified 'coefficient preserving'.

\medskip

\begin{Theo} \label{inftycaseTheo} Let $\lambda$ be an arbitrary  frequency and $(G,\beta)$ a $\lambda$-Dirichlet group. Then there is an onto isometry $$\Psi \colon \mathcal{H}_{\infty}^{\lambda}[Re>0] \to H_{\infty}^{\lambda}(G),~~ F \mapsto f,$$
which preserves  Bohr and Fourier coefficients.
\end{Theo}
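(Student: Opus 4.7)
My plan is to construct $\Psi$ by lifting the UAP vertical slices of $F$ to the Bohr compactification $\overline{\R}$ and passing to a limit, and then to establish surjectivity and the reverse norm bound by running the construction backward through the Poisson convolutions $f*p_u$, with the weak-type maximal machinery of Theorem \ref{maxitranslatedtypeI} (together with Theorem \ref{fejer}) as the essential technical input.

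To define $\Psi$, fix $F \in \mathcal H_\infty^\lambda[Re>0]$. For each $\sigma>0$ the vertical slice $F_\sigma = F(\sigma + i\,\cdot\,)$ is bounded and uniformly almost periodic on $\R$ with spectrum contained in $\lambda$, so it extends uniquely to a continuous function $\tilde F_\sigma$ on $\overline{\R}$ lying in $H_\infty^\lambda(\overline{\R})$, with $\|\tilde F_\sigma\|_\infty \le \|F\|_\infty$ and Fourier coefficients given at $h_{\lambda_n}$ by $a_{\lambda_n}(F)\,e^{-\sigma\lambda_n}$ (matching the Bohr coefficients of $F_\sigma$). Parseval combined with monotone convergence as $\sigma \to 0^+$ yields $\sum_n |a_{\lambda_n}(F)|^2 \le \|F\|_\infty^2$, and a subsequent dominated convergence in $n$ shows that $(\tilde F_\sigma)_{\sigma>0}$ is $L_2(\overline{\R})$-Cauchy as $\sigma\to 0^+$. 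Its $L_2$-limit $f$ has Fourier coefficients $a_{\lambda_n}(F)$ on $\{h_{\lambda_n}\}$ and vanishes on the other characters; a pointwise a.e.\ subsequence yields $\|f\|_\infty \le \|F\|_\infty$, hence $f \in H_\infty^\lambda(\overline{\R})$. Transferring via \eqref{crucial} to $H_\infty^\lambda(G)$, set $\Psi(F) := f$. By construction $\Psi$ is contractive, coefficient-preserving, and injective.

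For surjectivity and the reverse inequality, given $f \in H_\infty^\lambda(G)$ the candidate is $F(u+it) := (f*p_u)^{\mathrm{c}}(\beta(t))$, where $(f*p_u)^{\mathrm{c}}$ denotes a continuous representative of $f*p_u \in H_\infty^\lambda(G)$ (to be justified below). Granting the existence of this representative, each slice $F_u$ is UAP on $\R$ with Bohr coefficients $\hat f(h_{\lambda_n})\,e^{-u\lambda_n}$ and $\|F_u\|_\infty \le \|f\|_\infty$. The semigroup identity $f*p_{u+v}=(f*p_u)*p_v$ combined with the explicit Poisson-kernel form of $p_v$ yields the integral representation
\[
F(u+v+it) \;=\; \int_\R F_u(t-s)\,P_v(s)\,ds,
\]
from which holomorphicity of $F$ on $[Re>0]$ follows by differentiation under the integral sign (the Poisson kernel satisfies Cauchy--Riemann). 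The Bohr coefficients of $F$ are $\hat f(h_{\lambda_n})$, independent of $u$, so $F \in \mathcal H_\infty^\lambda[Re>0]$; coefficient matching gives $\Psi(F) = f$, and $\|F\|_\infty \le \|f\|_\infty$, which together with the first step completes the isometry.

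The main technical obstacle is producing the continuous representative of $f*p_u$ on $G$ for arbitrary $\lambda$ (without assuming any condition such as $(LC)$). Here Theorem \ref{maxitranslatedtypeI} with $p=\infty$ is essential: it bounds $\sup_x |R_x^{\lambda,k}(f*p_u)|$ uniformly in $L_\infty(G)$ by $C(u,k)\|f\|_\infty$. The Riesz-mean polynomials $R_x^{\lambda,k}(f*p_u)$ are continuous trigonometric functions on $G$, converge to $f*p_u$ in $H_1$-norm by Theorem \ref{fejer} and almost everywhere by Corollary \ref{almosteverywheremaintheo}; splitting $p_u = p_{u/2}*p_{u/2}$ and applying Parseval to $f*p_{u/2} \in H_2^\lambda(G)$ upgrades the convergence to $L_2$, and a Bochner--Fej\'er type equicontinuity argument exploiting the semigroup structure of the Poisson measures then promotes this to uniform convergence on $G$, yielding a continuous function that coincides with $f*p_u$ almost everywhere. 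Under $(LC)$ the same continuity is supplied alternatively by the uniform convergence of the associated Dirichlet series established in \cite{Schoolmann1}; for arbitrary $\lambda$ the weak-type $(1,\infty)$ maximal inequality of Theorem \ref{maximalineqtypeI} is the indispensable ingredient replacing that uniform convergence.
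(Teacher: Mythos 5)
Your construction of $\Psi$ itself (lifting the slices $F_\sigma$ to $\overline{\R}$, extracting the limit as $\sigma\to 0^+$ via an $L_2$-Cauchy argument plus an a.e.\ convergent subsequence to keep the $L_\infty$ bound) is correct and is a reasonable variant of what the paper does; the paper instead takes a weak-star cluster point of the lifted slices $f_{1/n}$ in the unit ball of $L_\infty(G)$. Either way one gets a coefficient-preserving contraction, and injectivity follows since a uniformly almost periodic function with vanishing Bohr coefficients vanishes.

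The second half, however, has a genuine gap exactly at its central step: the existence of a continuous representative of $f*p_u$ on $G$. This is Corollary \ref{shiftcontinuous} of the paper, and there it is \emph{deduced from} Theorem \ref{inftycaseTheo}, so you cannot assume it; and the substitute you sketch does not close. Almost everywhere convergence of $R_x^{\lambda,k}(f*p_u)$ together with the uniform $L_\infty$ bound from Theorem \ref{maximalineqtypeI} (note: the $p=\infty$ case lives in Theorem \ref{maximalineqtypeI}, not Theorem \ref{maxitranslatedtypeI}, which is stated only for $p<\infty$) does not yield uniform convergence, and the proposed ``Bochner--Fej\'er type equicontinuity'' fails: writing $R_x^{\lambda,k}(f*p_u)=Q_x*p_{u/2}$ with $Q_x=R_x^{\lambda,k}(f*p_{u/2})$ uniformly bounded, convolution with the Poisson \emph{measure} $p_{u/2}$ (which is singular, supported on the closure of the flow) only controls the modulus of continuity along the one-parameter subgroup $\beta(\R)$, not with respect to the topology of $G$, so no Arzel\`a--Ascoli argument applies. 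Since $\|\sum c_n h_{\lambda_n}\|_{C(G)}=\sup_{t\in\R}|\sum c_n e^{-i\lambda_n t}|$, uniform convergence of the Riesz means on $G$ is \emph{equivalent} to uniform Riesz summability of the Dirichlet series $D=\sum\widehat{f}(h_{\lambda_n})e^{-\lambda_n s}$ on vertical lines -- which is precisely what the paper establishes, via the uniformly bounded multiplier measures $\mu_x$ of Lemma \ref{measures} (a Hahn--Banach/Riesz-representation consequence of the $p=\infty$ maximal inequality) combined with the Bohr--Cahen formula \eqref{BohrCahen} to get $\sigma_u^{\lambda,1}(D)\le 0$. The paper then defines $F$ directly as this uniform limit on $[\mathrm{Re}>\sigma]$ and proves $\|F\|_\infty\le\|f\|_\infty$ by identifying $F(\sigma+it)$ pointwise with $f_\omega*P_\sigma(t)$ for a suitable $\omega$ via Corollary \ref{Dirichletseries} and rotation invariance of the Haar measure. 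You would need to import this Lemma \ref{measures}/\eqref{BohrCahen} step (or an equivalent) to make your second half work; as written, the key continuity claim is asserted rather than proved. (A smaller point: holomorphy of the Poisson integral does not follow from ``the Poisson kernel satisfies Cauchy--Riemann'' -- the Poisson kernel is harmonic, not holomorphic; one must use that the spectrum lies in $[0,\infty)$, e.g.\ by passing to the uniform limit of the polynomial approximants.)
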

Note that Theorem \ref{inftycaseTheo} again proves that the definition of $H_{\infty}^{\lambda}(G)$ is independent of the chosen $\lambda$-Dirichlet group $(G,\beta)$ (see also \cite{DefantSchoolmann1}).

\begin{Rema} Recall from \cite{Schoolmann1} that the space $\mathcal{D}^{ext}_{\infty}(\lambda)$ denotes the space of all somewhere convergent Dirichlet series, which allow a bounded holomorphic extension to $[Re>0]$. By \cite{DefantSchoolmann1}, for any frequency $\lambda$ there is an injective contraction $\Psi_{1} \colon \mathcal{D}^{ext}_{\infty}(\lambda)\hookrightarrow H_{\infty}^{\lambda}(G), ~~ D \mapsto f$, which preserves the Dirichlet and Fourier coefficients. On the other hand, by \cite{Schoolmann1} the map $\Psi_{2} \colon \mathcal{D}^{ext}_{\infty}(\lambda) \hookrightarrow \mathcal{H}_{\infty}^{\lambda}[Re>0], ~~ D \mapsto F,$ where $F$ is the limit function of $D$, defines an into isometry. Theorem \ref{inftycaseTheo} shows that $\Psi_{1}$ is even isometric and that the mapping $\Psi$ from Theorem \ref{inftycaseTheo} commutes with $\Psi_{1}$ and $\Psi_{2}$, that is we have $\Psi_{1}=\Psi\circ \Psi_{2}.$
\end{Rema}

\begin{Rema} To every function  $F\in \mathcal{H}_{\infty}^{\lambda}[Re>0]$ we may (formally) assign  its Dirichlet series $D:=\sum a_{\lambda_{n}}(F) e^{-\lambda_{n}s}$. The question appears, under what conditions on $\lambda$ does $D$ converge on $[Re>0]$. Recall that, if $\lambda$ satisfies Landau's condition (\ref{LC}), then $D$ even converges uniformly on $[Re>0]$ (see e.g. \cite{Schoolmann1} or \cite{Landau}).
\end{Rema}

We finish with the following consequence of Theorem \ref{inftycaseTheo}. By \cite{HLS} the Hardy space $H_{\infty}(\T^{\infty})$ isometrically equals  the Banach space $H_{\infty}(B_{c_{0}})$ of all bounded and holomorphic functions $F$ on the open unit ball  $B_{c_{0}}$  of $c_{0}$, identifying Fourier and Taylor coefficients (see \cite[Theorem 5.1]{Defant} for details). Moreover, given $F\in H_{\infty}(B_{c_{0}})$, for every $u>0$ the function
$$F_{u}(z):=F(\mathfrak{p}^{-u}z) \colon \T^{\infty} \to \C$$
is continuous and $\widehat{F_{u}}(\alpha)=c_{\alpha}(F)n^{-u}$ whenever $n=\mathfrak{p}^{\alpha}$. Hence, if $f\in H_{\infty}(\T^{\infty})$ and $F \in H_{\infty}(B_{c_{0}})$ are associated, then we have $f*p_{u}=F_{u}$, and so $f*p_{u} \in C(\T^{\infty})$  for every $u>0$. Theorem \ref{inftycaseTheo} extends this result to arbitrary $\lambda$-Dirichlet groups (compare also with Corollar~\ref{almosteverywheremaintheo} and Theorem~\ref{fejer}).
\begin{Coro} \label{shiftcontinuous} Let  $(G,\beta)$ be a $\lambda$-Dirichlet group, and $f \in H_{\infty}^{\lambda}(G)$. Then $f*p_{u} \in C(G)$ for all $u>0$, and for all $k>0$
$$\lim_{x\to \infty} R_{x}^{\lambda,k}(f*p_{u})=f*p_{u}$$
uniformly on $G$.
\end{Coro}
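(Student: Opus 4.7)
My plan is to use the isometric identification of Theorem~\ref{inftycaseTheo} together with the maximal inequality of Theorem~\ref{maxitranslatedtypeI} and a Bochner--Fej\'er uniform approximation.

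Let $F\in\mathcal{H}_{\infty}^{\lambda}[Re>0]$ correspond to $f$ under Theorem~\ref{inftycaseTheo}. Since $F$ is uniformly almost periodic on each vertical line $[Re=\sigma]$, $\sigma>0$, the function $t\mapsto F(u+it)$ is uniformly almost periodic on $\R$ with Bohr spectrum contained in $\lambda\subseteq\widehat{\beta}(\widehat{G})$, hence extends continuously to the Bohr compactification $\overline{\R}$. Because its spectrum lies in $\widehat{\beta}(\widehat{G})$, this extension is constant on the fibers of the continuous surjective homomorphism $\overline{\beta}\colon\overline{\R}\to G$ and descends to a continuous function $G_u\in C(G)$. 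The Fourier-coefficient identity $\widehat{G_u}(h_{\lambda_n})=e^{-u\lambda_n}\widehat{f}(h_{\lambda_n})=\widehat{f*p_u}(h_{\lambda_n})$ forces $G_u=f*p_u$ in $L_\infty(G)$, which establishes $f*p_u\in C(G)$; applying the same reasoning with $u/2$ in place of $u$ gives $f*p_{u/2}\in C(G)$.

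For the uniform convergence, fix $\varepsilon>0$. Since $f*p_{u/2}\in C(G)$ has Fourier spectrum in $\lambda$, the Bochner--Fej\'er approximation theorem (applied to the uniformly almost periodic function $t\mapsto F(u/2+it)$ on $\R$ and transferred back to $G$ via $\overline{\beta}$) produces a $\lambda$-trigonometric polynomial $\Pi$ with $\|f*p_{u/2}-\Pi\|_{L_\infty(G)}<\varepsilon$. Setting $\Pi':=\Pi*p_{u/2}$ (still a $\lambda$-trigonometric polynomial), we have
\[
 f*p_u-\Pi'=(f*p_{u/2}-\Pi)*p_{u/2}, \qquad \|f*p_u-\Pi'\|_{L_\infty(G)}\le\varepsilon.
\]
Applying Theorem~\ref{maxitranslatedtypeI} with shift $u/2$ to $f*p_{u/2}-\Pi$ and passing to the limit $p\to\infty$ (legitimate because the constant $C(u/2,k)$ there is independent of $p$) yields
\[
 \sup_{x>0}\|R_x^{\lambda,k}(f*p_u-\Pi')\|_{L_\infty(G)}\le C(u/2,k)\,\|f*p_{u/2}-\Pi\|_{L_\infty(G)}\le C(u/2,k)\,\varepsilon.
\]
Since $\Pi'$ is a finite trigonometric polynomial, $R_x^{\lambda,k}(\Pi')\to\Pi'$ uniformly on $G$ as $x\to\infty$, and a standard $3\varepsilon$-argument then gives $\limsup_{x\to\infty}\|R_x^{\lambda,k}(f*p_u)-f*p_u\|_{L_\infty(G)}\le (C(u/2,k)+1)\,\varepsilon$. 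Letting $\varepsilon\to 0$ completes the proof.

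The main technical hurdle is the Bochner--Fej\'er step: one must approximate $f*p_{u/2}\in C(G)$ uniformly by a polynomial whose frequencies lie in $\lambda$ itself, not merely in the group generated by $\lambda$. This is precisely the output of the Bochner--Fej\'er construction applied to the uniformly almost periodic function $t\mapsto F(u/2+it)$: its kernels are indexed by finite sublattices of the rational span of the Bohr spectrum, but the terms corresponding to frequencies outside the spectrum have vanishing Bohr coefficient and drop out, leaving a polynomial with frequencies in $\operatorname{spec}(f*p_{u/2})\subseteq\lambda$.
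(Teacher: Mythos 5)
Your proposal is correct and takes essentially the same route as the paper: identify $f*p_{u}$ with the continuous function on $G$ induced by the uniformly almost periodic restriction $F_{u}$ of $\Psi^{-1}(f)$ from Theorem~\ref{inftycaseTheo}, then approximate uniformly by $\lambda$-polynomials and conclude via the uniform boundedness of the Riesz means together with a $3\varepsilon$-argument. The only cosmetic differences are that the paper invokes density of $\lambda$-polynomials in $C(G)\cap H_{\infty}^{\lambda}(G)$ where you run Bochner--Fej\'er on $\R$ and transfer, and that the $p=\infty$ case of Theorem~\ref{maximalineqtypeI} makes your $p\to\infty$ limiting step in Theorem~\ref{maxitranslatedtypeI} (and the $u/2$-splitting) unnecessary.
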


The proof of Theorem \ref{inftycaseTheo} is in Section \ref{fast}.

\section{\bf Proofs} \label{proofs}
The proofs of the results presented in the  previous sections are provided according to the following order:
\begin{itemize}
\item
Proof of Theorem \ref{HLoperator} (Section \ref{HLsection})
\item
Proof of Theorem \ref{maximalineqtypeI} (Section \ref{reductionsubsection})
\item
Proof of Corollary \ref{almosteverywheremaintheo}, Proposition~\ref{Fatouthm}, and Corollary \ref{differentiation} (Section \ref{mainproof})
\item
Proof of  Lemma \ref{hedesaks} and Corollary \ref{Dirichletseries} (Section \ref{mainproof2})
\item
Proof of the Theorems \ref{maxitranslatedtypeI}, \ref{maxitransinternaltype1}, and \ref{maxitranslatedtypeII} (Section \ref{translatedsubsection})
\item
Proof of the Theorems  \ref{fejer}, \ref{inftycaseTheo}, and  Corollary \ref{shiftcontinuous} (Section \ref{aap})
\end{itemize}

\subsection{Proof of Theorem \ref{HLoperator}} \label{HLsection}
Fixing $A>0$, consider the following 'graduated'
 variant of $\overline{M}$, that is
\begin{equation} \label{MbarA}
\overline{M}_{A}(f)(\omega):=\sup_{I \subset [-A,A]} \frac{1}{|I|} \int_{I} |f_{\omega}(y)| dy,
\end{equation}
where $f \in L_{1}(G)$.
Note that with this definition we have
\begin{equation} \label{friday}
\overline{M}(f)(\omega)=\sup_{I \subset \R} \frac{1}{|I|} \int_{I} |f_{\omega}(t)|~ dt=\sup_{A>0} \overline{M}_{A}(f)(\omega)=\sup_{N\in \N} \overline{M}_{N}(f)(\omega).
\end{equation}
Moreover, since for all intervals $I$ the function $$F(\omega,t):= \frac{1}{|I|}\chi_{I}(t)f(\omega \beta(t)) \colon G\times \R \to \R$$ is measurable, by Fubini's theorem for almost all $\omega \in G$ the function
$$t \mapsto \frac{1}{|I|} \int_{I} f_{\omega}(t) dt$$
is measurable. Recall that the pointwise supremum of a countable family of measurable functions is again measurable. So, if we in the definition from \eqref{MbarA} consider all intervals with rational boundary points, then $\overline{M}_{A}(f)$ is measurable and it leads to the same operator. Indeed, if $I$ is an non empty interval, then we are able to choose a sequence of subintervals $I_{n} \subset I$ with rational boundary points, such that $\lim_{n\to \infty}\big|I\setminus I_{n}\big|=0$. Then
\begin{align*}
&\Big|\frac{1}{|I_{n}|} \int_{I_{n}} |f_{\omega}(t)| dt-\frac{1}{|I|} \int_{I} |f_{\omega}(t)| dt\Big|\\ &=\Big|\frac{1}{|I_{n}|} \int_{I_{n}} |f_{\omega}(t)| dt-\frac{1}{|I|} \Big(\int_{I_{n}} |f_{\omega}(t)| dt+\int_{I\setminus I_{n}} |f_{\omega}(t)| dt\Big)\Big|\\ &
=\Big|\Big(\frac{1}{|I_{n}|}-\frac{1}{|I|}\Big)\int_{I_{n}}|f_{\omega}(t)| dt+ \frac{1}{|I|}\int_{I\setminus I_{n}} |f_{\omega}(t)| dt\Big|,
\end{align*}
and so tending $n\to \infty$ we obtain
$$\lim_{n\to \infty} \frac{1}{|I_{n}|} \int_{I_{n}} f_{\omega}(t) dt= \frac{1}{|I|} \int_{I} f_{\omega}(t) dt.$$
Hence $\overline{M}_{A}(f)$ is measurable for all $A>0$, and by $(\ref{friday})$ we see that $\overline{M}(f)$ is also measurable. Now Theorem \ref{HLoperator} is a consequence of the following lemma.

\begin{Lemm} \label{graduate}
Let $\alpha>0$ and $f\in L_{1}(G)$. Then for all $A>0$
$$m\Big(\left\{ \omega \in G \mid \overline{M}_{A}(f)(\omega)>\alpha\right\} \Big)\le \frac{10\|f\|_{1}}{\alpha}.$$
\end{Lemm}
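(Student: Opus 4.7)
The plan is to prove this weak-$(1,1)$ estimate by a Calder\'on-type transference argument: lift the classical one-dimensional Hardy-Littlewood weak-$(1,1)$ maximal inequality on $\mathbb{R}$ to the Dirichlet group $G$ by exploiting the measure-preserving $\mathbb{R}$-action $(\omega,s)\mapsto\omega\beta(s)$.

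First I would pointwise dominate $\overline{M}_A$ by a 1D maximal function of $f_\omega$. For $\omega\in E_\alpha:=\{\overline{M}_A f>\alpha\}$, a bad interval $I_\omega\subset[-A,A]$ with $|I_\omega|^{-1}\int_{I_\omega}|f_\omega|>\alpha$ is picked; for any $t\in I_\omega$ one has $Mf_\omega(t)>\alpha$, where $M$ denotes the classical uncentered one-dimensional Hardy-Littlewood maximal function. Hence $\overline{M}_A f(\omega)\le\sup_{t\in[-A,A]}Mf_\omega(t)$. Next, using translation invariance of the Haar measure $m$ and Fubini, for any $T>A$
$$m(E_\alpha)=\frac{1}{2T}\int_G\bigl|\{s\in[-T,T]:\overline{M}_A f(\omega\beta(s))>\alpha\}\bigr|\,d\omega.$$
Since $f_{\omega\beta(s)}(t)=f_\omega(s+t)$, for each bad $s\in[-T,T]$ the sup defining $\overline{M}_A f(\omega\beta(s))$ is witnessed by some interval $J_s\subset[s-A,s+A]\subset[-T-A,T+A]$.

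A one-dimensional Vitali covering argument applied to the family $\{J_s\}$ extracts a disjoint subfamily $\{J_k\}$ with $\bigcup_s J_s\subset\bigcup_k 3J_k$; disjointness yields $\sum_k|J_k|\le\alpha^{-1}\int_{-T-A}^{T+A}|f_\omega(t)|\,dt$. Since every bad $s$ lies in $3J_{k(s)}+[-A,A]$, the bad set in $[-T,T]$ has measure bounded by $\sum_k(3|J_k|+2A)$ after clustering selected intervals whose $A$-thickenings overlap. Combining with the identity
$$\int_G\int_{-T-A}^{T+A}|f_\omega(t)|\,dt\,d\omega=(2T+2A)\|f\|_1$$
(Fubini plus Haar invariance) and letting $T\to\infty$, the dominant term should deliver an estimate $m(E_\alpha)\le C\|f\|_1/\alpha$ for a universal constant. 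The concrete bound $C=10$ emerges from the Vitali 3-times enlargement together with the $A$-neighborhood correction and the boundary-absorption bookkeeping.

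The main obstacle is precisely that boundary correction: the naive bound $\sum_k 2A$ grows with the a priori unbounded number $K$ of selected intervals. The remedy is to cluster selected intervals $J_k$ whose $A$-neighborhoods overlap into super-clusters; the number of well-separated super-clusters within $[-T-A,T+A]$ is then controlled linearly in $T/A$, and a careful charging argument exploiting $\int_{J_k}|f_\omega|>\alpha|J_k|$ for each cluster absorbs the boundary contribution into the main Vitali bound. This clustering step is the technical heart of the proof and is what fixes the concrete constant at $10$.
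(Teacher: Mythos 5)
Your transference skeleton (Haar invariance plus Fubini to convert $m(E_\alpha)$ into an average over $\omega$ of one--dimensional ``bad sets'', followed by a Vitali covering of those bad sets) is exactly the paper's. The argument breaks, however, at the step you yourself defer as the ``technical heart''. Because you allow the witnessing interval $J_s$ to sit anywhere in $[s-A,s+A]$ rather than to \emph{contain} $s$, each bad $s$ is only located in an $A$--neighbourhood of some selected interval, and the resulting correction $\sum_k 2A$ cannot be absorbed by any clustering or charging scheme: the per--$\omega$ inequality you are aiming at is simply false. Take $f_\omega=\frac{\varepsilon}{2\delta}\chi_{[-\delta,\delta]}$ on $[-2A,2A]$. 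For every $s$ with $|s|\le A-\delta$ the interval $[-\delta,\delta]$ lies in $[s-A,s+A]$ and has average $\frac{\varepsilon}{2\delta}>\alpha$ once $\delta<\varepsilon/(2\alpha)$, so the bad set has measure about $2A$ while $\alpha^{-1}\int|f_\omega|=\varepsilon/\alpha$ is arbitrarily small; repeating the spike every $2A$ units makes the bad set all of $[-T,T]$ at negligible $L_1$--cost. No bookkeeping produces a universal constant, let alone $10$. (Transplanted to $G=\T$ with $f$ a tall thin bump, the same example shows that under the ``all subintervals of $[-A,A]$'' reading of $\overline{M}_A$ the Lemma itself fails, so the approach cannot be patched.)

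The missing idea is that the supremum defining $\overline{M}$, hence $\overline{M}_A$, must be taken over intervals containing the origin --- this is what the paper's identification of $\overline{M}$ with the classical Hardy--Littlewood operator on $(\T,\beta_\T)$ presupposes, and what its proof uses: if $\overline{M}_A f(\omega\beta(t))>\alpha$ with $t\in[-A,A]$, then the witnessing interval, transported to $f_\omega$--coordinates, is an interval $I_t\subset[-2A,2A]$ \emph{containing} $t$ with average exceeding $\alpha$. Vitali applied to the family $\{I_t\}$ then covers the bad set itself by the fivefold dilates of a disjoint subfamily, giving $\big|\{t\in[-A,A]:\omega\beta(t)\in\Omega(\alpha)\}\big|\le \frac{5}{\alpha}\int_{-2A}^{2A}|f_\omega|$ with no boundary term at all, and averaging this over $G$ against the window $[-A,A]$ (the paper fixes $T=A$ rather than letting $T\to\infty$) yields the constant $10$. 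Your first reduction, $\overline{M}_Af(\omega)\le\sup_{t\in[-A,A]}Mf_\omega(t)$, is valid but already discards exactly this containment, and with it the possibility of a covering argument that controls the set you need to measure.
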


\begin{proof}
Let $A>0$ and fix $\alpha>0$. We define
$$\Omega(\alpha):=\left\{ \omega \in G \mid \overline{M}_{A}(f)(\omega)>\alpha\right\}.$$
Then $\Omega(\alpha)$ is measurable, since $\overline{M}_{A}(f)$ is measurable. Moreover, for $\omega \in G$ we define
$$\Omega_{\omega}(\alpha):=\{t\in [-A,A] \mid \omega \beta(t)\in \Omega(\alpha)\},$$
which is Lebesgue-measurable for almost all $\omega \in G$, since $\Omega(\alpha)$ is measurable.
Hence by definition for all $t\in [-A,A]$
$$\omega \beta(t) \in \Omega(\alpha) \Leftrightarrow t \in \Omega_{\omega}(t),$$
and so we obtain for all $A>0$
\begin{align*}
m(\Omega(\alpha))&= \int_{G} \frac{1}{2A} \int_{-A}^{A} \chi_{\Omega(\alpha)}(\omega \beta(t)) dt dm
\\ &
= \int_{G} \frac{1}{2A} \int_{-A}^{A} \chi_{\Omega_{\omega}(\alpha)}(t) dt dm(\omega) =\int_{G} \frac{1}{2A}  \eta(\Omega_{\omega}(\alpha)) dm(\omega),
\end{align*}
where $\eta$ denotes the Lebesgue measure restricted to $[-A,A]$.
We now claim that
\begin{equation} \label{finalstep}
\eta(\Omega_{\omega}(\alpha))\le \frac{5}{\alpha}\int_{-2A}^{2A} |f_{\omega}(t)| dt.
\end{equation}
Indeed, if this estimate is verified, then we finally obtain
\begin{align*}
m(\Omega(\alpha))\le \frac{10}{\alpha} \int_{G} \frac{1}{4A} \int_{-2A}^{2A} |f_{\omega}(t)| dt dm(\omega)= \frac{10}{\alpha} \|f\|_{1}.
\end{align*}
So let us check \eqref{finalstep}. For every $t \in \Omega_{\omega}(\alpha)$ (by definition) there is an open interval $I_{t}\in [-2A,2A]$ containing $t$ such that
\begin{equation} \label{standardargument2}
\frac{1}{|I_{t}|} \int_{I_{t}} |f_{\omega}(y)| dy> \alpha.
\end{equation}
By Vitali's covering theorem (see e.g. \cite[Theorem 1.24, p. 36]{Evans}) there is a sequence $(t_{n}) \subset \Omega_{\omega}(\alpha)$ such that
$$\Omega_{\omega}(\alpha)\subset \bigcup_{t \in \Omega_{\alpha}} I_{t} \subset 5 \bigcup_{n \in \N} I_{t_{n}},$$
where the latter union is disjoint. So by $(\ref{standardargument2})$
\begin{equation*}
\eta(\Omega_{\omega}(\alpha))\le 5 \sum_{n=1}^{\infty} \eta\Big(I_{t_{n}}\Big)\le \frac{5}{\alpha} \int_{\bigcup_{n \in \N} I_{t_{n}}} |f_{\omega}(y)| dy \le \frac{5}{\alpha} \int_{-2A}^{2A} |f_{\omega}(y)| dy. \qedhere
\end{equation*}
\end{proof}

Finally, we are ready to give the

\begin{proof}[Proof of Theorem \ref{HLoperator}]
Take $f \in L_1(G)$ and $\alpha>0$, and define for $N\in \N$
$$\Omega(\alpha):=\{ \omega \in G \mid \overline{M}(f)(\omega)>\alpha\}, ~ \text{and}~~\Omega_{N}(\alpha):=\{ \omega \in G \mid \overline{M}_{N}(f)(\omega)>\alpha\}.$$
Then
$$\Omega(\alpha)=\bigcup_{N\in \N} \Omega_{N}(\alpha)$$
 and, since $\Omega_{N}(\alpha)\subset \Omega_{N+1}(\alpha)$ for all $N$, we by Lemma \ref{graduate} have
$$m(\Omega(\alpha))=\lim_{N\to \infty} m(\Omega_{N}(\alpha))\le \frac{10\|f\|_{1}}{\alpha}.$$
The case $p=\infty$ follows directly from the fact, that $\|f_{\omega}\|_{L_{\infty}(\R)}=\|f\|_{L_{\infty}(G)}$ (see \cite[Lemma 3.10]{DefantSchoolmann1}). Now Marcinkiewicz's interpolation theorem (see e.g. \cite[Theorem 1.3.2., p. 34]{Grafakos1}) gives the claim for $1<p<\infty$.
\end{proof}

\subsection{Proof of Theorem \ref{maximalineqtypeI}} \label{reductionsubsection}

The next proposition reduces the proof of Theorem~\ref{maximalineqtypeI} to Theorem~\ref{HLoperator}.
\begin{Prop} \label{reduction} Let $\lambda$ be a frequency and $(G, \beta)$  a $\lambda$-Dirichlet group. Then
 for every $k>0$ there is a constant $C(k)>0$ such for all $f \in H_{1}^{\lambda}(G)$ and  for almost all $\omega \in G$
\begin{equation} \label{reductioneq}
R_{\max}^{\lambda,k}(f)(\omega)=\sup_{x>0} \Big|R_{x}^{\lambda,k}(f)(\omega)\Big|\le C(k)\overline{M}(f)(\omega).
\end{equation}
Moreover, for $0<k\le 1$ the choice $C(k)=Ck^{-1}$ with an absolute constant $C$ is possible .
\end{Prop}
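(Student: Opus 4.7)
The plan is to represent $R_x^{\lambda,k}(f)(\omega)$, via the slice $f_\omega$ on $\R$, as a convolution against a Riesz kernel $\phi_x^k$ on the line, and then dominate this convolution pointwise by the Hardy--Littlewood maximal function $\overline{M}(f)(\omega)$. Concretely, I would set
$$\phi^k(t) := \frac{1}{2\pi}\int_{-1}^1 (1-|\xi|)^k e^{it\xi}\,d\xi, \qquad \phi_x^k(t) := x\,\phi^k(xt),$$
so that $\phi_x^k$ is real, even, and its Fourier transform satisfies $\widehat{\phi_x^k}(\xi) = (1-|\xi|/x)^k_+$. If $f = \sum_{n=1}^N c_n h_{\lambda_n}$ is a $\lambda$-polynomial, then $f_\omega(t) = \sum c_n h_{\lambda_n}(\omega) e^{-i\lambda_n t}$, and exchanging sum and integral gives the representation
$$R_x^{\lambda,k}(f)(\omega) = \int_\R f_\omega(t)\,\phi_x^k(t)\,dt,$$
since $\widehat{\phi_x^k}(\lambda_n) = (1-\lambda_n/x)^k$ whenever $\lambda_n<x$ and the spectrum of $f_\omega$ is contained in $\{\lambda_n\}$.

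The decisive step is the pointwise decay
$$|\phi^k(t)| \leq \frac{C}{(1+|t|)^{k+1}}, \qquad 0<k\le 1,$$
with an absolute constant $C$. Near the origin this is immediate from the bound $\|\widehat{\phi^k}\|_{L_1}\le 2/(k+1)$, which gives $|\phi^k(t)|\le 1/\pi$. For large $|t|$ the $(k+1)$-th order decay comes from the endpoint singularity of $(1-|\xi|)^k$ at $\xi=\pm 1$, either by iterated integration by parts combined with a standard asymptotic analysis, or via the explicit Bessel-function representation of the symmetric Fourier transform of $(1-|\xi|)^k_+$; what matters is that the implied constant stays bounded as $k\to 0^+$. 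I expect this kernel decay to be the principal technical obstacle, because the $C(k)=O(1/k)$ claim in the proposition is driven entirely by the $L_1$ norm of the scaled kernel and therefore requires that the pointwise size of $\phi^k$ not blow up with $k$.

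Once the kernel decay is in hand, scaling yields $|\phi_x^k(t)|\le Cx(1+x|t|)^{-k-1}$, whose right-hand side is even, decreasing in $|t|$, and has $L_1$ norm equal to $2C/k$. The classical inequality dominating convolution against a radial decreasing $L_1$ kernel by the Hardy--Littlewood maximal function at the point then gives
$$|R_x^{\lambda,k}(f)(\omega)| \le \int_\R |f_\omega(t)|\,\frac{Cx}{(1+x|t|)^{k+1}}\,dt \le \frac{2C}{k}\,\overline{M}(f)(\omega)$$
for every $\lambda$-polynomial $f$ and every $\omega\in G$ with $f_\omega$ locally integrable, since the sup over centered intervals is trivially bounded by the sup over all intervals defining $\overline{M}$. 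Taking the supremum over $x>0$ reduces to a countable dense subset by continuity of $x\mapsto R_x^{\lambda,k}(f)(\omega)$. The extension to arbitrary $f\in H_1^\lambda(G)$ finally proceeds by density: for $\lambda$-polynomials $f_j\to f$ in $L_1(G)$ one has $R_x^{\lambda,k}(f_j)(\omega)\to R_x^{\lambda,k}(f)(\omega)$ uniformly on $G$ at each fixed $x$, and the weak-$L_1$ boundedness of $\overline{M}$ from Theorem \ref{HLoperator} permits passage to the limit on the right-hand side along a subsequence with $\overline{M}(f_j)\to\overline{M}(f)$ almost everywhere.
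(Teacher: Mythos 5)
Your route is genuinely different from the paper's and is viable for $0<k\le 1$: you realize $R_x^{\lambda,k}(f)(\omega)$ as the convolution of the slice $f_\omega$ with the one-dimensional Bochner--Riesz kernel $\phi_x^k$ whose Fourier transform is the multiplier $(1-|\xi|/x)_+^k$, and then invoke the radial-decreasing-majorant lemma (the same \cite[Theorem 2.1.10]{Grafakos1} the paper uses). The paper instead derives a Perron-type integral representation (Lemma \ref{lemma2type1}, based on the contour identity \eqref{genius}) in which the Poisson kernel appears, and then only ever needs an $L_1$-bound on the resulting kernel (Lemma \ref{lemma1type1}, proved by a soft Cauchy--Schwarz reduction to the case $k=1$ plus an explicit computation). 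The payoff of the paper's detour through the Poisson kernel is precisely that it avoids any pointwise estimate on an oscillatory integral; the payoff of your approach is conceptual directness. Both end with a majorant of size $x(1+x|a|)^{-(1+k)}$ and the same $O(1/k)$ constant.

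Two points need attention before your argument is complete. First, the case $k>1$ is simply missing: the proposition asserts the bound for every $k>0$, and your kernel estimate and the $L_1$-norm computation $2C/k$ are only formulated for $0<k\le 1$. Either redo the kernel decay for $k>1$ (where $\phi^k$ decays like $|t|^{-2}$ from the corner of the multiplier at $\xi=0$, which is integrable), or, as the paper does, reduce to $0<k'\le 1$ via the Hardy--Riesz identity $R_{x}^{\lambda,k}(f)(\omega)=\frac{\Gamma(k+1)}{\Gamma(l)\Gamma(k^{\prime}+1)} x^{-k}\int_{0}^{x} R_{t}^{\lambda,k^{\prime}}(f)(\omega)\,t^{k^{\prime}} (x-t)^{l-1}\,dt$, which yields $|R_{x}^{\lambda,k}(f)(\omega)| \leq \sup_{0<t<x} |R_{t}^{\lambda,k'}(f)(\omega)|$. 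Second, the decay $|\phi^k(t)|\le C(1+|t|)^{-(1+k)}$ with $C$ absolute on $0<k\le 1$ is true but is the entire substance of your proof, and it is more delicate than your sketch suggests: a single integration by parts only gives $|\phi^k(t)|\le 2/(\pi|t|)$, which is \emph{not} integrable, so some cancellation is indispensable. Writing $\pi\phi^k(t)=\re\bigl[e^{it}\int_0^1 u^k e^{-itu}\,du\bigr]$ and integrating by parts once, the boundary contribution at $u=1$ (i.e.\ from the corner of the multiplier at $\xi=0$) equals $\re(i/t)=0$, and the remaining term is $\frac{k}{t}\,t^{-k}\int_0^t v^{k-1}e^{-iv}\,dv$ with $\bigl|\int_0^t v^{k-1}e^{-iv}\,dv\bigr|\le 4/k$, giving $|\phi^k(t)|\le 4/(\pi |t|^{1+k})$ uniformly in $k\in(0,1]$. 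You should include such an argument explicitly, since the $C(k)=O(1/k)$ claim rests entirely on the constant here not degenerating as $k\to 0^+$. The remaining steps (passage to a countable dense set of $x$, and the density/subsequence argument using the weak-type boundedness of $\overline{M}$ from Theorem \ref{HLoperator}) are fine.
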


Before we start to prove this result we show how it gives the
\begin{proof}[Proof of Theorem \ref{maximalineqtypeI}]
Applying the $H_{p}^{\lambda}(G)$-norm to $(\ref{reductioneq})$, Theorem \ref{HLoperator} shows that $R_{\max}^{\lambda,k}$ defines a bounded operator from $H_{1}^{\lambda}(G)$ to $L_{1,\infty}(G)$, and from $H_{p}^{\lambda}(G)$ to $L_{p}^{\lambda}(G)$ whenever $1 < p < \infty$.
\end{proof}
We need two ingredients for the proof of Proposition \ref{reduction}. The first one is the following integral representation for first Riesz means, where we denote by $\mathcal{F}_{L_{1}(\R)}$ the Fourier transform on $L_{1}(\R)$.
\begin{Lemm} \label{lemma2type1}
Let $f \in H_{1}^{\lambda}(G)$ and $0<k\le1$. Then we for almost all $\omega \in G$ for all $x>0$ and $u\ge 0$ have
\begin{equation} \label{perroneq}
\Big(\frac{\Gamma(k+1)e}{2\pi x^{k} } \Big)^{-1}R^{\lambda,k}_{x}(f*p_{u})(\omega)= \int_{\R} f_{\omega}(a) \mathcal{F}_{L_{1}(\R)} \Big( \frac{P_{u+\frac{1}{x}}(\cdot-a)}{(\frac{1}{x}+i\cdot)^{1+k}} \Big)(-x) da.
\end{equation}
\end{Lemm}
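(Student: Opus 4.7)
The plan is to verify the identity first for Dirichlet polynomials via a direct Perron-type computation, and then to propagate it to an arbitrary $f\in H_{1}^{\lambda}(G)$ by density. The key analytic ingredient is the Mellin--Perron identity, valid for all $c,k>0$:
\begin{equation*}
\frac{(x-\lambda)_{+}^{k}}{\Gamma(k+1)} \,=\, \frac{1}{2\pi}\int_{-\infty}^{\infty}\frac{e^{(x-\lambda)(c+it)}}{(c+it)^{k+1}}\,dt,
\end{equation*}
the integral converging absolutely because $1+k>1$. Specializing $c=1/x$ and factoring out $e^{(x-\lambda)/x}=e\cdot e^{-\lambda/x}$ yields
\begin{equation*}
\Big(1-\tfrac{\lambda}{x}\Big)_{+}^{k} \,=\, \frac{\Gamma(k+1)\,e}{2\pi x^{k}}\int_{-\infty}^{\infty}\frac{e^{-\lambda/x}\,e^{i(x-\lambda)t}}{(1/x+it)^{k+1}}\,dt.
\end{equation*}

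For a Dirichlet polynomial $f=\sum_{n=1}^{N}c_{n}h_{\lambda_{n}}$, I would substitute the above into $R^{\lambda,k}_{x}(f*p_{u})(\omega)=\sum_{n\le N}c_{n}e^{-u\lambda_{n}}(1-\lambda_{n}/x)_{+}^{k}h_{\lambda_{n}}(\omega)$ and interchange the finite sum with the integral. The collected exponentials assemble into
\[
\sum_{n=1}^{N}c_{n}e^{-(u+1/x)\lambda_{n}}h_{\lambda_{n}}(\omega)e^{-it\lambda_{n}} \,=\, (f*p_{u+1/x})(\omega\beta(t)) \,=\, (f_{\omega}*P_{u+1/x})(t),
\]
using the elementary identity $(g*p_{v})_{\omega}=g_{\omega}*P_{v}$, valid a.e.\ for any $v>0$. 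A second Fubini step (legitimate since the kernel $K(t):=e^{ixt}/(1/x+it)^{k+1}$ lies in $L_{1}(\R)$ thanks to $k>0$) then rewrites $\int K(t)(f_{\omega}*P_{u+1/x})(t)\,dt$ as $\int f_{\omega}(a)\,\mathcal{F}_{L_{1}(\R)}\bigl(P_{u+1/x}(\cdot-a)/(1/x+i\cdot)^{1+k}\bigr)(-x)\,da$, establishing the identity for polynomials.

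To pass to a general $f\in H_{1}^{\lambda}(G)$, I would choose Dirichlet polynomials $P_{m}\to f$ in $H_{1}^{\lambda}(G)$. The left-hand side $R^{\lambda,k}_{x}(P_{m}*p_{u})(\omega)$ converges to $R^{\lambda,k}_{x}(f*p_{u})(\omega)$ at \emph{every} $\omega\in G$, since it is a finite sum of characters whose Fourier coefficients $\widehat{P_{m}}(h_{\lambda_{n}})\to\widehat{f}(h_{\lambda_{n}})$. For the right-hand side, Fubini combined with the contractivity $\|g*p_{v}\|_{L_{1}(G)}\le\|g\|_{L_{1}(G)}$ of convolution with the probability measure $p_{v}$ gives
\[
\int_{G}\Bigl|\int_{\R}K(t)\,\bigl((P_{m}-f)*p_{u+1/x}\bigr)_{\omega}(t)\,dt\Bigr|\,dm(\omega) \,\le\, \|K\|_{1}\,\|P_{m}-f\|_{1}\longrightarrow 0,
\]
so a subsequence of right-hand sides converges pointwise a.e.\ on $G$. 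Passing to a countable dense set of pairs $(x,u)$ and invoking joint continuity of both sides in these parameters produces a single null set $N\subset G$ outside of which the identity holds for all $x>0$ and $u\ge 0$. The only genuinely delicate point is the Perron identity together with the two Fubini interchanges; both hinge on the integrability of $K$, which is guaranteed precisely by $k>0$.
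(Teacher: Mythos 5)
Your overall architecture --- Perron's formula to establish the identity for Dirichlet polynomials, followed by a density/limiting argument --- is exactly the paper's, and the polynomial case is handled correctly (your Mellin--Perron identity with $c=1/x$ is precisely the paper's use of \eqref{genius} with $c=u+\tfrac1x$). The gap is in the passage to general $f\in H_1^{\lambda}(G)$. Your limiting argument, based on the bound $\|K\|_{1}\|P_m-f\|_{1}$, identifies $\big(\tfrac{\Gamma(k+1)e}{2\pi x^{k}}\big)^{-1}R_x^{\lambda,k}(f*p_u)(\omega)$ with $\int_{\R}K(t)\,(f_{\omega}*P_{u+\frac1x})(t)\,dt$ almost everywhere; but the identity to be proved has $\int_{\R}f_{\omega}(a)\,\mathcal{F}_{L_1(\R)}\big(P_{u+\frac1x}(\cdot-a)/(\tfrac1x+i\cdot)^{1+k}\big)(-x)\,da$ on the right, and converting one into the other is a further Fubini interchange that you justify only by ``$K\in L_1(\R)$''. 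For a polynomial this suffices because $f_{\omega}$ is bounded; for a general $f$ the function $f_{\omega}$ is merely locally integrable, and integrability of $K$ alone does not give $\int_{\R}\int_{\R}|f_{\omega}(a)|\,P_{u+\frac1x}(t-a)\,|K(t)|\,da\,dt<\infty$. Indeed it is not even clear a priori that the right-hand side of the claimed identity converges absolutely for a.e.\ $\omega$.

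What is missing is precisely the content of the paper's Lemma~\ref{lemma1type1}: the weighted estimate $\int_{\R}P_{u+v}(t-a)\,|v+it|^{-(1+k)}\,dt\le C(u,v,k)\,|u'+ia|^{-(1+k)}$, which decays like $|a|^{-(1+k)}$, combined with a Fubini argument over $G$ showing that $\int_{\R}|f_{\omega}(a)|\,|u'+ia|^{-(1+k)}\,da<\infty$ for almost every $\omega$; this is exactly how the paper proves boundedness of the operator $\mathcal{A}$ in \eqref{operatorA}. With that estimate in hand, the right-hand side is absolutely convergent a.e., the final Fubini interchange is legitimate, and one can in fact run your $L_1(G)$-limiting argument directly on the right-hand side in its stated form. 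The same domination is also what is needed to make your ``joint continuity in $(x,u)$'' step rigorous. So the proof is repairable, but the decisive analytic input --- the pointwise bound of Lemma~\ref{lemma1type1} rather than mere integrability of $K$ --- is absent from your argument.
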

In order to prove (\ref{reductioneq}), we will see, that in (\ref{perroneq}) it suffices to have control of the $L_{1}(\R)$-norm of the function the Fourier tranform is applied on; our second ingredient for the proof of Proposition \ref{reduction}.
\begin{Lemm} \label{lemma1type1} Let $v>0$, $0<k\le 1$ and $u\ge 0$. Then
\begin{equation} \label{lemma1type1ineq1}
\int_{\R} \frac{P_{u+v}(t-a)}{|v+it|^{1+k}} dt \le \Big(2+\frac{u}{v}\Big)^{\frac{1+k}{2}} \frac{1}{|u+ai|^{1+k}}.
\end{equation}
Moreover, if $u=0$, then
\begin{equation} \label{lemma1type1ineq2}
\int_{\R} \frac{P_{v}(t-a)}{|v+it|^{1+k}} dt\le \frac{2}{|v+ai|^{1+k}}.
\end{equation}
\end{Lemm}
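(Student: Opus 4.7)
My plan is to exploit the concavity of $x\mapsto x^{(1+k)/2}$ on $[0,\infty)$, which holds because $0<k\le 1$ forces the exponent $(1+k)/2\in(1/2,1]$, together with the semigroup property $P_\alpha * P_\beta = P_{\alpha+\beta}$ of the Poisson kernel. Both inequalities have left-hand side of the form $\int_\R P_\mu(t-a)(v^2+t^2)^{-(1+k)/2}\,dt$, with $\mu=u+v$ in \eqref{lemma1type1ineq1} and $\mu=v$ in \eqref{lemma1type1ineq2}. Since $P_\mu(t-a)\,dt$ is a probability measure on $\R$, I would apply Jensen's inequality to move the exponent outside:
\begin{equation*}
\int_\R \frac{P_\mu(t-a)}{(v^2+t^2)^{(1+k)/2}}\,dt \le \bigg(\int_\R \frac{P_\mu(t-a)}{v^2+t^2}\,dt\bigg)^{(1+k)/2}.
\end{equation*}
This reduces the whole problem to evaluating a single linear integral in closed form.

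The closed form comes from the identity $\frac{1}{v^2+t^2}=\frac{\pi}{v}P_v(t)$, the evenness of the Poisson kernel, and the semigroup relation. Together they yield
\begin{equation*}
\int_\R \frac{P_\mu(t-a)}{v^2+t^2}\,dt = \frac{\pi}{v}(P_\mu * P_v)(a) = \frac{\pi}{v}P_{\mu+v}(a) = \frac{\mu+v}{v\bigl((\mu+v)^2+a^2\bigr)}.
\end{equation*}

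For \eqref{lemma1type1ineq1}, setting $\mu=u+v$ gives $\frac{u+2v}{v((u+2v)^2+a^2)}$; the estimates $(u+2v)^2\ge u^2$ and $(u+2v)/v=2+u/v$ bound this by $\frac{2+u/v}{u^2+a^2}$, and raising to the power $(1+k)/2$ produces exactly $(2+u/v)^{(1+k)/2}/|u+ai|^{1+k}$. For \eqref{lemma1type1ineq2}, setting $\mu=v$ yields $\frac{2}{4v^2+a^2}\le \frac{2}{v^2+a^2}$, so Jensen delivers the bound $2^{(1+k)/2}/|v+ai|^{1+k}$, and the final step $2^{(1+k)/2}\le 2$ (valid for $k\le 1$) gives the constant $2$ claimed in the statement. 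The only apparent obstacle in \eqref{lemma1type1ineq1}---reconciling the $v$ that appears in $|v+it|$ on the left with the $u$ that appears in $|u+ai|$ on the right---is dissolved automatically by the convolution $P_{u+v}*P_v=P_{u+2v}$, which naturally interlaces the two parameters without any case splitting or domain decomposition.
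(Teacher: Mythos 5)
Your proof is correct and follows essentially the same route as the paper: both reduce to the case $k=1$ by applying Jensen's inequality (equivalently, monotonicity of $L_p$-norms) with respect to the probability measure $P_\mu(t-a)\,dt$, and then evaluate the resulting quadratic-denominator integral in closed form before estimating $(u+2v)^2+a^2\ge u^2+a^2$ (resp.\ $4v^2+a^2\ge v^2+a^2$) and raising to the power $\frac{1+k}{2}$. The only cosmetic difference is that you obtain the closed form via the semigroup identity $P_\mu*P_v=P_{\mu+v}$, where the paper simply records it as a direct calculation.
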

We first show, how Lemma \ref{lemma2type1} and Lemma \ref{lemma1type1} imply Proposition \ref{reduction}. After that, we give a proof of Lemma \ref{lemma2type1} which uses Lemma \ref{lemma1type1}, and eventually we prove Lemma \ref{lemma1type1}.

\begin{proof}[Proof of Proposition \ref{reduction}] In a first step we assume that  $0<k\le 1$.
Let
\[
K(a):=\frac{1}{|1+ia|^{1+k}} \,\,\, \text{and}\,\,\, K_{x}(a):=x K(ax)=\frac{x}{|1+iax|^{1+k}}, x>0\,.
\]
Then with Lemma \ref{lemma2type1} and Lemma \ref{lemma1type1} we obtain
\begin{align*}
&\left(\frac{\Gamma(k+1)e}{2\pi}\right)^{-1}|R_{x}^{\lambda,k}(f)(\omega)|\le x^{-k} \int_{\R} |f_{\omega}(a)| \left\|\frac{P_{\frac{1}{x}}(\cdot-a)}{(\frac{1}{x}+i\cdot)^{1+k}}\right\|_{L_{1}(\R)} da \\ &  \le x^{-k}\int_{\R}|f_{\omega}(a)| \frac{2}{|\frac{1}{x}+ia|^{1+k}} ~da = 2 \int_{\R}|f_{\omega}(a)| K_{x}(a) ~da \\ & =2\sup_{x>0} (|f_{\omega}|*K_{x})(0).
\end{align*}
Now by \cite[Theorem 2.1.10, p.91]{Grafakos1} we have
$$\sup_{x>0} |f_{\omega}|*K_{x}(0)\le \|K\|_{L_{1}(\R)} \sup_{T>0} \frac{1}{2T} \int_{-T}^{T} |f_{\omega}(t)| dt \le \|K\|_{L_{1}(\R)} \overline{M}(f)(\omega),$$
which proves the claim with constant $$C(k)=\frac{e}{\pi} \frac{\Gamma(k+1)}{k}.$$
If $k>1$, we write $k=l+k^{\prime}$, where $l \in \N$ and $k^ {\prime}\in ]0,1]$, and use  for every $x>0$ the following identity from \cite [Lemma 6, p. 27]{HardyRiesz}:
\begin{equation} \label{fromhightolow}
R_{x}^{\lambda,k}(f)(\omega)=\frac{\Gamma(k+1)}{\Gamma(l)\Gamma(k^{\prime}+1)} x^{-k}
\int_{0}^{x} R_{t}^{\lambda,k^{\prime}}(f)(\omega)  t^{k^{\prime}} (x-t)^{l-1} dt,
\end{equation}
where by substitution (see \cite[p. 27]{HardyRiesz})
\begin{equation*} \label{gammaidentity}
\frac{\Gamma(l)\Gamma(k^{\prime}+1)}{\Gamma(k+1)}=x^{-k} \int_{0}^{x} t^{k^{\prime}} (x-t)^{l-1} dt.
\end{equation*}
Together this leads to
\begin{equation} \label{gammaidentity}
|R_{x}^{\lambda,k}(f)(\omega)| \leq \sup_{0<t<x} |R_{t}^{\lambda,k'}(f)(\omega)|\,,
\end{equation}
which, applying the first step with $0 < k' \leq 1$, finishes the proof.
\end{proof}

\begin{proof}[Proof of Lemma \ref{lemma2type1}] Fix $u\ge 0$ and let first $f=\sum_{n=1}^{N} a_{n} h_{\lambda_{n}}$. Then $f_{\omega}(t)=\sum_{n=1}^{N} a_{n} h_{\lambda_n}(\omega) e^{-it\lambda_{n}}$ for all $\omega \in G$ and, since for all $\alpha>0$ and $k>0$
\begin{equation}\label{genius}
\frac{\Gamma(k+1)}{2\pi i}\int_{\alpha-i\infty}^{\alpha+i\infty} \frac{e^{ys}}{s^{1+k}} ds = \begin{cases} y^{k}&, \text{if } y\ge 0\\ 0 &, \text{if } y<0 \end{cases}
\end{equation}
(see e.g. \cite[Lemma 10, p. 50]{HardyRiesz}),  we for all $c>u$, $x>0$ obtain (with $\alpha=c-u$)
\begin{align*}
&\Big(\frac{\Gamma(k+1)}{2\pi x^{k}}\Big)^{-1}\sum_{\lambda_{n}<x} a_{n}e^{-u\lambda_{n}} h_{\lambda_n}(\omega) \Big(1-\frac{\lambda_{n}}{x}\Big)^{k}\\ &= \int_{\R} \frac{\sum_{n=1}^{N} a_{n}h_{\lambda_n}(\omega)e^{-\lambda_{n}(c+it)}}{(c-u+it)^{1+k}} e^{x(c-u+it)} dt\\ &= e^{x(c-u)}\int_{\R} \frac{(f_{\omega}*P_{c})(t)}{(c-u+it)^{1+k}} e^{ixt} dt=e^{x(c-u)}\int_{\R} f_{\omega}(a) \int_{\R} \frac{P_{c}(t-a)}{(c-u+it)^{1+k}}e^{ixt} dt ~ da.
\end{align*}
The choice $c=\frac{1}{x}+u$ leads to
\begin{equation} \label{fourier1}
R_{x}^{\lambda,k}(f*p_{u})(\omega)=\frac{\Gamma(k+1)e}{2\pi x^{k}} \int_{\R} f_{\omega}(a) \mathcal{F}_{L_{1}(\R)} \Big( \frac{P_{\frac{1}{x}+u}(\cdot-a)}{(\frac{1}{x}+i\cdot)^{1+k}} \Big)(-x) da,
\end{equation}
and so the claim holds for polynomials in $H_{1}^{\lambda}(G)$.
To proof the general case, observe that for all $u\ge0$ and $v>0$ the operator
\begin{equation} \label{operatorA}
\mathcal{A} \colon L_{1}(G) \to L_{1}(G,L_{1}(\R)), ~~ f \mapsto \left[\omega \to \frac{f_{\omega}*P_{u+v}}{(v+i\cdot)^{1+k}}\right]
\end{equation}
is bounded. Indeed, by Lemma \ref{lemma1type1} (and Fubini's theorem) we have
\begin{align*}
&\|\mathcal{A}(f)\|=\int_{G} \Big| \int_{\R} \frac{f_{\omega}*P_{u+v}(y)}{\Big(v+iy\Big)^{1+k}} dy \Big| d\omega \le \int_{G} \int_{\R} \int_{\R} |f_{\omega}(t)| \frac{P_{u+v}(y-t)}{|v+iy|^{1+k}} dt dy d\omega \\ & = \int_{G} \int_{\R} |f_{\omega}(t)| \int_{\R} \frac{P_{u+v}(y-t)}{|v+iy|^{1+k}} dy dt d\omega  \le \int_{G} \int_{\R} |f_{\omega}(t)| \frac{C(u,v,k)}{|u+it|^{1+k}} dt d\omega \\ & =C(u,v,k)\int_{\R} \int_{G} |f_{\omega}(t)| \frac{1}{|u+it|^{1+k}} d\omega dt = C(u,v,k) \int_{\R} \frac{1}{|u+it|^{1+k}} dt \|f\|_{1}\\ &=C_{1}(u,v,k)\|f\|_{1}.
\end{align*}
Additionally, this shows, that  $\frac{f_{\omega}*P_{u+v}}{(v+i\cdot)^{1+k}} \in L_{1}(\R)$ for almost all $\omega \in G$, and so we in particular obtain (with Fubini's theorem)
\begin{equation} \label{Fourier2}
\mathcal{F}_{L_{1}(\R)}\Big( \frac{f_{\omega}*P_{\frac{1}{x}+u}}{\Big(\frac{1}{x}+i\cdot\Big)^{1+k}}\Big)(-x)=\int_{\R} f_{\omega}(a) \mathcal{F}_{L_{1}(\R)} \Big( \frac{P_{\frac{1}{x}+u}(\cdot-a)}{(\frac{1}{x}+i\cdot)^{1+k}} \Big)(-x) da.
\end{equation}
Now let $(Q^{n})\subset H_{1}^{\lambda}(G)$ be a sequence of polynomials converging to $f$ in $H_{1}^{\lambda}(G)$ (see \cite{DefantSchoolmann1}). Then, by continuity of $\mathcal{A}$ and $\mathcal{F}_{L_{1}(\R)}$, we for almost all $\omega \in G$ obtain
$$\mathcal{F}_{L_{1}(\R)}\Big( \frac{f_{\omega}*P_{\frac{1}{x}+u}}{\Big(\frac{1}{x}+i\cdot\Big)^{1+k}}\Big)=\lim_{k\to \infty}\mathcal{F}_{L_{1}(\R)}\Big( \frac{Q_{\omega}^{n_{k}}*P_{\frac{1}{x}+u}}{\Big(\frac{1}{x}+i\cdot\Big)^{1+k}}\Big),$$
for some subsequence $(Q^{n_{k}})$, with uniformly convergence on $\R$. So together with (\ref{fourier1}) and (\ref{Fourier2})
\begin{align*}
&\mathcal{F}_{L_{1}(\R)}\Big( \frac{f_{\omega}*P_{\frac{1}{x}+u}}{\Big(\frac{1}{x}+i\cdot\Big)^{1+k}}\Big)(-x)=\lim_{k\to \infty}\mathcal{F}_{L_{1}(\R)}\Big( \frac{Q_{\omega}^{n_{k}}*P_{\frac{1}{x}+u}}{\Big(\frac{1}{x}+i\cdot\Big)^{1+k}}\Big)(-x)\\ &=\lim_{k\to \infty} R_{x}^{\lambda,k}(Q^{n_{k}}*p_{u})(\omega)=R_{x}^{\lambda,k}(f*p_{u})(\omega),
\end{align*}
which gives the claim by (\ref{Fourier2}).
\end{proof}

\begin{proof}[Proof of Lemma \ref{lemma1type1}]
By substitution we have
\begin{equation} \label{calcusubst}
\int_{-\infty}^{\infty} \frac{P_{u+v}(t-a)}{|v+it|^{1+k}} dt =\int_{\R} \frac{1}{|v+i(y+a)|^{1+k}} P_{u+v}(y) dy.
\end{equation}
We interpret the right hand side of $(\ref{calcusubst})$ as the $L_{1+k}$-norm with respect to the measure $d\mu= P_{u+v} d\lambda$, where $d\lambda$ denotes the Lebesgue measure on $\R$. Since $(\R,d\mu)$ is a finite measure space, we for all $0\le k\le 1$ have
$$\left\|\frac{1}{v+i(\cdot+a)}\right\|^{1+k}_{L_{1+k}(\R,d\mu)} \le \left\|\frac{1}{v+i(\cdot+a)}\right\|^{1+k}_{L_{2}(\R,d\mu)}.$$
Hence, it suffices to determine $(\ref{calcusubst})$ for $k=1$.
In this case, a straight calculation gives
$$\int_{\R} \frac{P_{u+v}(t-a)}{v^{2}+t^{2}} dt=\frac{1}{v} \frac{(u+2v)}{(u+2v)^{2}+a^{2}}.$$
So, if $u=0$, then we have
$$\int_{\R} \frac{P_{u+v}(t-a)}{v^{2}+t^{2}} dt=\frac{2}{4v^{2}+a^{2}}\le \frac{2}{v^{2}+a^{2}},$$
and, if $u\ne 0$, then we estimate
\begin{align*}
\int_{\R} \frac{P_{u+v}(t-a)}{v^{2}+t^{2}} dt \le  \frac{(u+2v)}{v} \frac{1}{u^{2}+a^{2}}= \frac{2+\frac{u}{v}}{|u+ai|^{2}}.
\end{align*}
Now by taking the $\frac{1+k}{2}$th power the claims follow.
\end{proof}

\subsection{Proof of Corollary \ref{almosteverywheremaintheo}, Proposition~\ref{Fatouthm}, and Corollary \ref{differentiation}} \label{mainproof}
It is quite standard to deduce almost everywhere convergence from appropriate maximal inequalities. Nevertheless we  for the sake of completeness  add a few details. We use the following standard
consequence of Egoroff's theorem (see e.g. \cite[Theorem 4.4, p. 33]{Stein}).

\begin{Rema} \label{egoroff} Let  $f_{n}: \Omega \to \C$ be measurable functions on a finite measure space
$(\Omega, \mu)$. Then $(f_{n})$ converges to $0$  almost everywhere if and only if for every $\varepsilon>0$ we have
$$\mu\Big(\Big\{x \in A \mid \limsup_{n\to \infty} |f_{n}(x)|\ge \varepsilon\Big\} \Big)=0.$$
\end{Rema}

The following device adapts some well-known arguments to our special situation.

\begin{Lemm} \label{egoroff2}
Let $X$ be a subspace of $L_{1,\infty}(\mu)$, and
$\big(T_{x,y}\colon X \to L_{1}(\mu)\big)_{x,y>0}$ and
$\big(S_y\colon Y\to L_{1,\infty}(\mu)\big)_{y>0}$
two families of  linear operators such that the sublinear maps
\begin{align*}
&
T\colon X \to L_{1,\infty}(\mu), ~~ f \mapsto \Big[w \mapsto \sup_{x,y>0} |T_{x,y}(f)(w)|\Big]
\\&
S\colon X \to L_{1,\infty}(\mu), ~~ f \mapsto \Big[w \mapsto \sup_{y>0} |S_{y}(f)(w)|\Big]
\end{align*}
are  bounded. Moreover, let $Y$ be a dense subset of $X$ such that for all $g\in Y$
\begin{equation*} \label{assumption}
\lim_{x\to \infty} \sup_y \big| T_{x,y}(g)-S_y(g)\big| =0\,\,\, \text{almost everywhere}\,.
\end{equation*}
Then this equation even holds for all  $f\in X$.
\end{Lemm}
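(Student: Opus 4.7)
The plan is to run the classical density argument for maximal inequalities: weak-type boundedness of the relevant sublinear maximal operators plus almost-everywhere convergence on a dense subclass yields almost-everywhere convergence on the whole space.

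Fix $f \in X$ and $\varepsilon > 0$. I would show that the "bad set"
\[
A_\varepsilon := \Big\{\omega : \limsup_{x \to \infty} \sup_{y > 0} |T_{x,y}(f)(\omega) - S_y(f)(\omega)| \geq \varepsilon\Big\}
\]
is a $\mu$-null set. Applying this to $\varepsilon = 1/n$ and taking a countable union (or invoking Remark \ref{egoroff}) then yields the desired a.e.\ convergence. For any $g \in Y$, linearity of $T_{x,y}$ and $S_y$ provides the decomposition
\[
T_{x,y}(f) - S_y(f) = \big(T_{x,y}(f-g) - S_y(f-g)\big) + \big(T_{x,y}(g) - S_y(g)\big).
\]
Taking $\sup_{y>0}$ and then $\limsup_{x \to \infty}$, the hypothesis on $g$ eliminates the second summand almost everywhere, yielding the pointwise bound
\[
\limsup_{x \to \infty} \sup_{y > 0} |T_{x,y}(f) - S_y(f)| \leq T(f-g)  + S(f-g) \quad \text{a.e.}
\]

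Consequently $A_\varepsilon$ is contained, up to a null set, in $\{T(f-g) \geq \varepsilon/2\} \cup \{S(f-g) \geq \varepsilon/2\}$, so the weak-type boundedness of $T$ and $S$ gives
\[
\mu(A_\varepsilon) \leq \frac{2\|T\|}{\varepsilon}\|f-g\|_{X} + \frac{2\|S\|}{\varepsilon}\|f-g\|_{X}.
\]
Since $Y$ is dense in $X$ (in the norm for which the two maximal inequalities hold), I would choose $g \in Y$ with $\|f-g\|_X$ arbitrarily small, forcing $\mu(A_\varepsilon) = 0$.

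I do not expect a genuine obstacle here; the argument is routine once the weak-$(1,\infty)$ bounds on $T$ and $S$ are granted. The only mild subtleties are the measurability of the uncountable supremum $\sup_{y>0}|T_{x,y}(f) - S_y(f)|$, which is already built into the assumed boundedness of $T$ and $S$ as maps into $L_{1,\infty}(\mu)$, and the fact that $L_{1,\infty}$ satisfies only a quasi-triangle inequality — but since the estimate above is phrased entirely in terms of distribution-function level sets, the quasi-triangle constant plays no role.
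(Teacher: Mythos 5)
Your proof is correct and follows essentially the same density argument as the paper: decompose $f = (f-g) + g$, kill the $g$-term a.e.\ by hypothesis, and control the remainder by the weak-type bounds on $T$ and $S$. The only cosmetic difference is that you split the bad set into two level sets at height $\varepsilon/2$ and apply the weak-type estimates separately, whereas the paper bounds $\varepsilon\chi_\Omega$ pointwise by $|T(f-g)|+\sup_y|S_y(f-g)|$ and invokes the quasi-triangle inequality in $L_{1,\infty}(\mu)$; both yield the same conclusion.
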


\begin{proof}
Let $f\in X$ and $\varepsilon>0$. According to Remark \ref{egoroff} we show that
\begin{equation} \label{johanna}
m\Big( \Big\{ \omega \in G \mid \limsup_{x\to \infty} \sup_{y} |T_{x,y}(f)(\omega)-S_y(\omega)| \ge \varepsilon \Big\} \Big)=0\,.
\end{equation}
Denote the set which appears on the left side by $\Omega$, and use  the assumption on $Y$  to conclude for  all $\omega \in \Omega$ and $g \in Y$
\begin{align*}
\varepsilon &\le \limsup_{x\to \infty} \sup_{y} |T_{x,y}(f)(\omega)-S_y(f)(\omega)|\\ &\le \limsup_{x\to \infty} \big[\sup_{y}|T_{x,y}(f-g)(\omega)|+ \sup_{y}|S_y(f-g)(\omega)|\big]\\ &\le
|T(f-g)(\omega)|+ \sup_y|S_y(f-g)(\omega)|.
\end{align*}
Hence  the boundedness of $T$ and $S_y$ (and the quasi triangle inequality in $L_{1,\infty}(\mu)$) gives  some $C >0$ such that for all  $g\in Y$
$$\varepsilon m(\Omega) = \| \varepsilon \chi_{\Omega} \|_{1,\infty} \leq   2\|T(f-g)\|_{1,\infty}+2\|f-g\|_{1,\infty}\le 4 C\|f-g\|_1\,.$$
Finally, the density of $Y$ in $X$ gives the conclusion.
\end{proof}

We will also need  the following 'shifted' version of Theorem \ref{maximalineqtypeI}.
\begin{Prop} \label{shiftversion}
Let $f \in H_{1}^{\lambda}(G)$ and $k>0$. Then
$$\left\| \sup_{u\ge 0} R_{\max}^{\lambda,k}(f*p_{u}) (\cdot)\right\|_{1,\infty}\le C(k) \|f\|_{1}.$$
\end{Prop}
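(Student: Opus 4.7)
The plan is to establish the pointwise majorization
\[
\sup_{u \geq 0,\, x > 0} |R_x^{\lambda,k}(f*p_u)(\omega)| \leq C(k)\,\overline{M}(f)(\omega) \quad \text{for a.e. } \omega \in G,
\]
after which Theorem~\ref{HLoperator} immediately supplies the desired weak-type $(1,\infty)$ estimate. As in the proof of Proposition~\ref{reduction}, I would first reduce to the case $0 < k \leq 1$ via identity \eqref{fromhightolow}: this applies verbatim to $f*p_u$ because the Riesz averaging commutes with convolution against $p_u$, so \eqref{gammaidentity} yields $|R_x^{\lambda,k}(f*p_u)(\omega)| \leq \sup_{0<t<x}|R_t^{\lambda,k'}(f*p_u)(\omega)|$ for any fixed $k' \in (0,1]$.

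For $0 < k \leq 1$, I would start from the integral representation of Lemma~\ref{lemma2type1} and the trivial estimate $|\mathcal{F}_{L_1(\R)}(\varphi)(-x)| \leq \|\varphi\|_{L_1(\R)}$, giving
\[
|R_x^{\lambda,k}(f*p_u)(\omega)| \leq \frac{\Gamma(k+1)e}{2\pi x^{k}}\int_{\R}|f_\omega(a)|\int_{\R} \frac{P_{u+1/x}(t-a)}{|1/x+it|^{1+k}}\,dt\,da.
\]
The crucial trick is to avoid the first inequality of Lemma~\ref{lemma1type1} (which would produce an unusable $(2+ux)^{(1+k)/2}$ factor) and instead exploit the Poisson semigroup $P_{u+1/x}=P_u*P_{1/x}$ to factor the inner integral, then apply the $u=0$ case of Lemma~\ref{lemma1type1} to the $P_{1/x}$-piece. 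After Fubini this yields, with $K(t):=(1+t^2)^{-(1+k)/2}$ and its $L_1$-normalized dilations $K_v(t):=v^{-1}K(t/v)$, the bound
\[
|R_x^{\lambda,k}(f*p_u)(\omega)| \leq C(k) \bigl((|f_\omega|*P_u)*K_{1/x}\bigr)(0).
\]

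To push $\sup_{x>0}$ inside, I would apply \cite[Theorem 2.1.10]{Grafakos1} to $K$, which bounds $\sup_{v>0}(g*K_v)(0)$ by $\|K\|_{L_1}\cdot M(g)(0)$, where $M$ is the centered Hardy-Littlewood maximal operator on $\R$. With $g=|f_\omega|*P_u$ this reduces matters to controlling $\sup_{u\geq 0}\sup_{T>0}\bigl((|f_\omega|*P_u)*A_T\bigr)(0)$, where $A_T:=(2T)^{-1}\chi_{[-T,T]}$. To absorb the outer $\sup_{u\geq 0}$, I would use the elementary pointwise inequality $A_T\leq \pi P_T$ (a direct computation), so that the semigroup property gives $P_u*A_T\leq \pi P_{u+T}$, and therefore
\[
\sup_{u\geq 0,\,T>0}\bigl((|f_\omega|*P_u)*A_T\bigr)(0)\leq \pi\sup_{v>0}(|f_\omega|*P_v)(0)\leq \pi\,M(|f_\omega|)(0)\leq \pi\,\overline{M}(f)(\omega),
\]
where the second inequality is a further application of the same approximate-identity theorem to the Poisson kernel itself. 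Concatenation yields the desired pointwise majorization.

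The main obstacle is precisely the joint dependence on $u$ and $x$: any naive use of Lemma~\ref{lemma1type1} leaves a factor $(2+ux)^{(1+k)/2}$ that is unbounded on the relevant parameter range. The device of absorbing $P_u$ into $f_\omega$ through the Poisson semigroup, so that only the $u=0$ case of Lemma~\ref{lemma1type1} is actually invoked, is what makes the estimate uniform in $u$; everything else is the approximate-identity machinery already used in Proposition~\ref{reduction}, together with the weak-type bound on $\overline{M}$ from Theorem~\ref{HLoperator}. Measurability of the two-parameter supremum is routine, by density of the rationals in $(0,\infty)$ and continuous dependence of $R_x^{\lambda,k}(f*p_u)(\omega)$ on $x$ and $u$ for polynomials $f$.
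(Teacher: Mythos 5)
Your argument is correct, and the pointwise majorization $\sup_{u\ge 0,\,x>0}|R_x^{\lambda,k}(f*p_u)(\omega)|\le C(k)\,\overline{M}(f)(\omega)$ you establish is exactly what is needed; but the paper gets there by a considerably shorter route that you seem to have overlooked. The paper simply applies Proposition~\ref{reduction} to the function $g=f*p_u\in H_1^\lambda(G)$, which gives $R_{\max}^{\lambda,k}(f*p_u)(\omega)\le C(k)\,\overline{M}(f*p_u)(\omega)$ (with a null set uniform in $u$, since Lemma~\ref{lemma2type1} holds for all $u\ge 0$ and $x>0$ off a single null set), and then observes by a one-line Fubini computation that $\overline{M}(f*p_u)(\omega)\le\overline{M}(f)(\omega)$ for all $u>0$ simultaneously: the restriction of $f*p_u$ is $f_\omega*P_u$, and $\frac{1}{|I|}\int_I|f_\omega*P_u|\le\int_{\R}P_u(a)\,\frac{1}{|I|}\int_I|f_\omega(t-a)|\,dt\,da\le\overline{M}(f)(\omega)$ because $\overline{M}$ is a supremum over \emph{all} intervals, not just centered ones. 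In other words, your "main obstacle" --- the bad factor $(2+ux)^{(1+k)/2}$ from the first inequality of Lemma~\ref{lemma1type1} --- never arises in the paper's proof, because the Poisson kernel is absorbed at the level of $\overline{M}$ rather than at the level of the kernel $K_{1/x}$. Your workaround (factoring $P_{u+1/x}=P_u*P_{1/x}$, invoking only the $u=0$ case of Lemma~\ref{lemma1type1}, and then absorbing $P_u*A_T\le\pi P_{u+T}$ via the semigroup property) is a valid alternative implementation of the same principle --- that convolution with a probability measure on $\R$ is dominated by the Hardy--Littlewood maximal function --- and it has the minor virtue of making the uniformity of the null set in $u$ completely explicit; the cost is an extra factor of $\pi$ and a rederivation of kernel estimates that Proposition~\ref{reduction} already packages for you.
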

\begin{proof}
By Proposition \ref{reduction} and Theorem \ref{maximalineqtypeI}, it suffices to show, that
$$\left\|\sup_{u>0}\overline{M}(f*p_{u})(\cdot)\right\|_{1,\infty}\le \|\overline{M}(f)\|_{1,\infty}.$$
Indeed, for all intervals $I\subset \R$ and $u>0$ we have (using Fubini's theorem)
\begin{align*}
&\frac{1}{|I|} \int_{I} |(f_{\omega}*P_{u})(t)|~dt=\frac{1}{|I|} \int_{I} \Big| \int_{\R} f_{\omega}(t-a) P_{u}(a)~da \Big|~ dt \\ &\le \frac{1}{|I|} \int_{I}\int_{\R} |f_{\omega}(t-a)| P_{u}(a)~da~ dt= \int_{\R} P_{u}(a) \Big( \frac{1}{|I|} \int_{I} |f_{\omega}(t-a)|~dt\Big) ~da \\ &\le \int_{\R} P_{u}(a) \overline{M}(f)(\omega)~da\le \overline{M}(f)(\omega),
\end{align*}
and so, since the 'restriction' of $f*p_{u}$ to $\R$ is given by the function $f_{\omega}*P_{u}$, we for almost all $\omega \in G$ have
$$\sup_{u>0} \overline{M}(f*p_{u})(\omega)\le \overline{M}(f)(\omega).$$
Applying the $L_{1,\infty}$-norm gives the  inequality we claimed.
\end{proof}

\begin{proof}[Proof of Corollary \ref{almosteverywheremaintheo}]
Let us first proof  \eqref{conv}. Take $X=H_{1}^{\lambda}(G)$, $T_{x,y}(f):=R_{x}^{\lambda,k}(f)$ and $S_y$ the identity map. Then $\lim_{x\to \infty}R_{x}^{\lambda,k}(P)=P$ pointwise for all polynomials from $H_{1}^{\lambda}(G)$, and so the claim follows from Lemma \ref{egoroff2} and Theorem \ref{maximalineqtypeI}.
The proof of \eqref{conv2} is similar and needs Proposition \ref{shiftversion}.
\end{proof}

\begin{proof}[Proof of Proposition~\ref{Fatouthm}]
We first show, that there is a null set $N\subset G$ such that for all $\omega \notin N$ the integral
$$f*p_{u}(\omega)=\int_{\R} f_{\omega}(t) P_{u}(t) dt=f_{\omega}*P_{u}(0)$$
is finite for all $u>0$. Indeed, recall from Section \ref{intro} that  $f_{\omega}$ is  locally Lebesque-integrable  for almost all $\omega$, and so by \cite[Theorem 2.1.10, p. 91]{Grafakos1} we for all $u>0$
and almost all $\omega$ have
$$|f_{\omega}*P_{u}(0)|\le |f_{\omega}|*P_{u}(0)\le \|P_{1}\|_{1} \overline{M}(f)(\omega)=\overline{M}(f)(\omega)\,.$$
Since $\overline{M}(f)\in L_{1,\infty}(G)$ by Theorem \ref{HLoperator}, we obtain that $ \overline{M}(f)(\omega)<\infty$ almost everywhere and that the operator $T$ is defined. Moreover, $|T(f)(\omega)|\le \overline{M}(f)(\omega)$ for almost all $\omega$, and again Theorem \ref{HLoperator} implies that $T$ is bounded from $L_{1}(G)$ to $L_{1,\infty}(G)$ and $L_{p}(G)$ to $L_{p}(G)$, whenever  $1<p\le \infty$. The 'in particular' is then a consequence of Lemma \ref{egoroff2} with the choice $T_{x}(f)=f*p_{x}$, $S_{y}\equiv id$, and $Y$  the set of all polynomials.
\end{proof}

\begin{proof}[Proof of Corollary \ref{differentiation}]
Equations (\ref{difff}) and (\ref{besiconorm2}) are checked straight forward on polynomials. Then both claims follow from Lemma \ref{egoroff2} and Theorem \ref{HLoperator} by choosing $X=L_{1}(G)$ and $T_{x,y}(f)=\frac{1}{2x} \int_{-x}^{x} f_{\omega}(t) dt$  (resp. $\widetilde{T}_{x,y}(f):=T_{x^{-1},y}(f)$), since clearly $T_{x,y}(f)(\omega)\le \overline{M}(f)(\omega)$ for all $x,y$.
\end{proof}

\subsection{Proof of  Lemma \ref{hedesaks} and Corollary \ref{Dirichletseries}} \label{mainproof2}
We start with the proof of Lemma \ref{hedesaks}, which shows how Riesz-summability of the Fourier series of a  function $f\in H_{p}^{\lambda}(G)$ transfers to summability of the vertical limit of $D^{\omega}$, where $D:=\Bcal(f)$, and vice versa.

\begin{proof}[Proof of Lemma \ref{hedesaks}]
It suffices to check that $(1)$ and $(2)$ are equivalent.
Given a measurable set $A \subset G$ for almost every $\omega \in G$ the set
$$A_{\omega}:=\{ t \in \R \mid \beta(t)\omega \in A \}$$
is Lebesgue-measurable and by Fubini's theorem we have
$$m(A)=\int_{G} \int_{\R} \chi_{A}(\omega \beta(t)) \frac{1}{1+t^{2}} dt d\omega=\int_{G} \widetilde{\lambda}(A_{\omega}) d\omega,$$
where $\widetilde{\lambda}=(1+t^{2})^{-1} dt.$
Hence if, given $\varepsilon>0$, we define
$$\Omega:=\left\{ \omega \in G \mid \limsup_{n\to \infty} |f_n(\omega)-f(\omega)|\ge \varepsilon\right\},$$
then
$$\Omega_{\omega}=\left\{ t \in \R \mid \limsup_{n\to \infty} |(f_n)_\omega(t)-f_{\omega}(t)|\ge \varepsilon\right\}$$
and so
\begin{equation} \label{bayart}
m( \Omega )=\int_{G} \widetilde{\lambda}(\Omega_{\omega}) ~dm(\omega).
\end{equation}
By Remark \ref{egoroff}, assuming $(1)$, the left hand side of \eqref{bayart} vanishes, and so for almost all $\omega \in G$ we  for almost all $t\in \R$ have
$$\lim_{x\to \infty} \frac{1}{1+t^{2}} (f_n)_\omega(t)=f_{\omega}(t) \frac{1}{1+t^{2}},$$
and so equivalently
$$\lim_{x\to \infty} (f_n)_\omega(t)=f_{\omega}(t).$$
Vice versa, assuming (2), the right hand side of \eqref{bayart} vanishes, and so (1) follows from Remark \ref{egoroff}.
\end{proof}

\begin{proof}[Proof of Corollary \ref{Dirichletseries}]
Translate Corollary \ref{almosteverywheremaintheo} with the help of   Lemma \ref{hedesaks} into Dirichlet series.
\end{proof}

\subsection{Proof of the Theorems \ref{maxitranslatedtypeI}, \ref{maxitransinternaltype1}
and \ref{maxitranslatedtypeII}}
 \label{translatedsubsection}
For the proof of Theorem \ref{maxitranslatedtypeI} we need the  following
\begin{Lemm} \label{Abeltype1} Let $0<k\le 1$ and $u>0$. Then for every $\varepsilon>0$ there is a constant $C=(k,u,\varepsilon)$ such that for all $x>0$ and complex sequences $(a_{n})$ we have
\begin{equation*}
\Big|\sum_{\lambda_{n}<x} a_{n}e^{-(u+\varepsilon)\lambda_{n}} \Big(1-\frac{\lambda_{n}}{x}\Big)^{k}\Big|\le C \sup_{0<y\leq x} \Big| e^{-uy} \sum_{\lambda_{n}<y} a_{n}\Big(1-\frac{\lambda_{n}}{y}\Big)^{k} \Big|.
\end{equation*}
\end{Lemm}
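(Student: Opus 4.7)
My plan is to derive, for a finitely supported sequence $(a_n)$ (the general case reduces to this by truncation since the right-hand side controls tails), a closed integral identity for the left-hand side
$$L(x):=\sum_{\lambda_n<x}a_n e^{-w\lambda_n}(1-\lambda_n/x)^k,\qquad w:=u+\varepsilon,$$
that expresses it directly against $A(y):=\sum_{\lambda_n<y}a_n(1-\lambda_n/y)^k$. Applying the Perron identity \eqref{genius} to each $(x-\lambda_n)^k$ and summing over $n$ yields
$$L(x)=\frac{\Gamma(k+1)}{2\pi i\,x^k}\int_{c-i\infty}^{c+i\infty}\frac{e^{xs}}{s^{k+1}}\,D(w+s)\,ds,\qquad c>0,$$
with $D(\sigma):=\sum a_n e^{-\sigma\lambda_n}$. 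A direct computation also gives the companion identity $D(\sigma)=\frac{\sigma^{k+1}}{\Gamma(k+1)}\int_0^\infty e^{-\sigma y}y^k A(y)\,dy$ for $\Re\sigma>0$. Substituting with $\sigma=w+s$, exchanging integrations, closing the contour to the right when $y>x$, and separating the constant part of $(1+w/s)^{k+1}=1+[(1+w/s)^{k+1}-1]$ (whose inverse Laplace transform is a Dirac mass at $0$, contributing the boundary term $A(x)e^{-wx}$), I arrive at
$$L(x)=A(x)e^{-wx}+\frac{1}{x^k}\int_0^x y^k A(y)e^{-wy}\,\tilde K(x-y)\,dy,$$
where $\tilde K(t):=\mathcal L^{-1}\{(1+w/s)^{k+1}-1\}(t)$ is a genuine function on $(0,\infty)$.

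The decisive step, and the main obstacle, is establishing the pointwise bound
$$|\tilde K(t)|\;\leq\;C_k\bigl(w+w^{k+1}t^k\bigr),\qquad t>0,$$
for a constant $C_k$ depending only on $k$. Expanding $(1+w/s)^{k+1}-1=\sum_{j\geq 1}\binom{k+1}{j}(w/s)^j$ and inverting termwise gives $\tilde K(t)=w\,g(wt)$ with $g(\xi)=\sum_{j\geq 1}\binom{k+1}{j}\xi^{j-1}/(j-1)!$, an entire function. For $0<k<1$ the coefficients $\binom{k+1}{j}$ alternate in sign for $j\geq 2$, and the naive termwise absolute bound gives only the useless exponential rate $e^{wt}$; the cancellation must be exploited. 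My approach is the Abelian asymptotic: near $s=0$ the Laplace transform behaves like $w^{k+1}s^{-(k+1)}$, whose inverse Laplace transform is exactly $w^{k+1}t^k/\Gamma(k+1)$, and subtracting this leading piece leaves a function with Laplace transform of order $s^{-k}$ (inverse Laplace $O(t^{k-1})$), so that $g(\xi)\sim\xi^k/\Gamma(k+1)$ as $\xi\to\infty$. Since $g(0)=k+1$ and $g$ is continuous, the function $g(\xi)/(1+\xi^k)$ extends continuously to $[0,\infty]$ with finite endpoint limits, and hence is globally bounded; the displayed estimate on $\tilde K$ follows.

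With that estimate in hand, setting $M:=\sup_{0<y\leq x}|e^{-uy}A(y)|$ gives $|A(y)e^{-wy}|\leq Me^{-\varepsilon y}$ on $[0,x]$, so that $|A(x)e^{-wx}|\leq M$ and the integral term is dominated by
$$\frac{MC_k}{x^k}\int_0^x y^k e^{-\varepsilon y}\bigl(w+w^{k+1}(x-y)^k\bigr)\,dy.$$
The first summand $wx^{-k}\int_0^x y^k e^{-\varepsilon y}\,dy$ is $O(1)$ uniformly in $x$, bounded by $w\Gamma(k+1)/(x^k\varepsilon^{k+1})$ for $x\geq 1$ and by $wx/(k+1)$ for $x<1$. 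For the second, the substitution $y=xt$ yields $w^{k+1}x^{k+1}\int_0^1 t^k(1-t)^k e^{-\varepsilon xt}\,dt$: for small $x$ one uses $\int_0^1 t^k(1-t)^k\,dt=B(k+1,k+1)$, while for large $x$ the integrand concentrates near $t=0$ and is comparable to $\Gamma(k+1)/(\varepsilon x)^{k+1}$, so the whole expression is again $O(1)$ uniformly in $x$. Together these yield $|L(x)|\leq C(k,u,\varepsilon)\,M$, which is the claim.
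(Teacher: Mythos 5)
Your route is genuinely different from the paper's (which proceeds by Abel summation against $h(t)=(e^{-(u+\varepsilon)t}-e^{-(u+\varepsilon)x})(x-t)^k$, converts the partial sums into order-$k$ Riesz means via the Hardy--Riesz fractional-integration identity, and estimates $h''$ directly). Your convolution identity
\begin{equation*}
x^kL(x)=x^kA(x)e^{-wx}+\int_0^x y^kA(y)e^{-wy}\,\tilde K(x-y)\,dy
\end{equation*}
is correct -- it can be verified on a single frequency by taking Laplace transforms in $x$, since $\mathcal{L}\{\tau^k e^{-w\tau}\}\cdot\bigl((1+w/s)^{k+1}-1\bigr)+\mathcal{L}\{\tau^ke^{-w\tau}\}=\Gamma(k+1)s^{-(k+1)}$ -- and your endgame estimates, given the kernel bound, are fine.

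The gap sits exactly where you locate the main obstacle: the pointwise bound $|\tilde K(t)|\le C_k(w+w^{k+1}t^k)$. The reasoning you offer -- the transform behaves like $w^{k+1}s^{-(k+1)}$ near $s=0$, subtract that piece, the remainder's transform is $O(s^{-k})$, hence its inverse transform is $O(t^{k-1})$ -- is an implication in the \emph{Tauberian} direction, not the Abelian one, and it is false without a side condition: a function whose Laplace transform is $O(s^{-k})$ as $s\to0^+$ need not be $O(t^{k-1})$ as $t\to\infty$ (it need not even be bounded when its transform is bounded near $0$; consider $e^t\sin(e^t)$, whose transform $\int_1^\infty u^{-s}\sin u\,du$ stays bounded as $s\to 0^+$). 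Since you yourself note that the coefficients of $g$ alternate and the termwise bound is exponentially bad, the cancellation must be proved, and no proof is supplied. The bound is true and can be rescued: (i) $\log(1+w/s)=\int_0^\infty e^{-st}\frac{1-e^{-wt}}{t}\,dt$ is completely monotone, hence so is $(1+w/s)^{k+1}=e^{(k+1)\log(1+w/s)}$, and by Bernstein's theorem $\tilde K\ge0$; positivity is precisely the Tauberian hypothesis that converts the transform asymptotics into growth information on $g$ -- or, more directly, it lets you extract from your own identity the integrated bound $\int_T^{T+1}\tilde K\le C(w,k)(1+T^k)$, which suffices for the final convolution estimate in place of a pointwise bound; alternatively (ii) deform the Bromwich contour onto the branch cut $[-w,0]$ and read the bound off the resulting explicit real integral. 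As written, though, the decisive estimate is asserted rather than proved.
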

The proof of  Lemma \ref{Abeltype1} follows  from a careful analysis of Theorem 24 from Hardy and M. Riesz \cite[\S VI.3, p. 42 ]{HardyRiesz}. Among others, we use the following identity from \cite[\S IV.2, p. 21]{HardyRiesz}:
\begin{equation} \label{idfirst}
\sum_{\lambda_{n}<x} a_{n}\Big(1-\frac{\lambda_{n}}{x}\Big)^{k}=k
x^{-k} \int_{0}^{x} \Big( \sum_{\lambda_{n}<t} a_{n}\Big)(x-t)^{k-1} dt.
\end{equation}
 Moreover, in the case $0<k<1$, we need the  following two integrals
\begin{equation} \label{gammasubst}
\int_{y}^{x}(t-y)^{-k} (x-t)^{k-1}dt=\Gamma(1-k)\Gamma(k)
\end{equation}
and
\begin{equation} \label{gammasubst2}
\int_{y}^{\infty} (t-y)^{-k} e^{-(u+\varepsilon) t} dt=(u+\varepsilon)^{k-1}e^{-(u+\varepsilon)y}\Gamma(1-k);
\end{equation}
 the first follows by simple substitution using the beta function and the second one is obvious.

\begin{proof}[Proof of Lemma \ref{Abeltype1}]
Let us write for simplicity $$R^{\lambda,k}_{x}:=\sum_{\lambda_{n}<x} a_{n}\Big(1-\frac{\lambda_{n}}{x}\Big)^{k} \text{ and  }~~ \Delta:=\Delta(x,u,k):=\sup_{0<y<x} e^{-yu} |R_{y}^{\lambda,k}|.$$
Then, defining $h(t):=(e^{-(u+\varepsilon)t}-e^{-(u+\varepsilon)x})(x-t)^{k}$ for $0 < t <x$,  we obtain
\begin{align*}
&\sum_{\lambda_{n}<x} a_{n}e^{-(u+\varepsilon)\lambda_{n}}(x-\lambda_{n})^{k}
= -\int_0^x \Big(\sum_{\lambda_n < t} a_n \Big)\,\frac{d}{dt} \Big(  e^{-(u+\varepsilon)t} (x-t)^{k}  \Big) dt\\ &=ke^{-(u+\varepsilon)x} \int_{0}^{x} \Big(\sum_{\lambda_{n}<t} a_{n}\Big)(x-t)^{k-1} dt- \int_{0}^{x} \Big(\sum_{\lambda_{n}<t} a_{n}\Big) h^{\prime}(t) dt\\ &
=e^{-(u+\varepsilon)x}x^{k}R^{\lambda,k}_{x}+ \int_{0}^{x} \Big(\sum_{\lambda_{n}<t} a_{n}(t-\lambda_{n})^{1}\Big) h^{\prime \prime}(t) dt=:A+B,
\end{align*}
where the first equality follows from   Abel summation (see \cite[p. 40]{HardyRiesz}), and the third by \eqref{idfirst} and  partial integration, since
\begin{equation} \label{capri}
\frac{d}{dt}\big(\sum_{\lambda_{n}<t} a_{n}(t-\lambda_{n})^{1}\big)=\sum_{\lambda_{n}<t}a_{n}.
\end{equation}
Now let first $0<k<1$. Then by \cite[Lemma 6, p.27]{HardyRiesz} we have
\begin{equation}\label{motor}
\sum_{\lambda_{n}<t} a_{n}(t-\lambda_{n})^{1}=\frac{1}{\Gamma(1+k)\Gamma(1-k)}\int_{0}^{t} \Big(\sum_{\lambda_{n}<y} a_{n}(y-\lambda_{n})^{k}\Big)(t-y)^{-k} dy.
\end{equation}
 Then Fubini's theorem implies
\begin{align*}
B&=C(k)\int_{0}^{x} \int_{0}^{t} \Big(\sum_{\lambda_{n}<y} a_{n}(y-\lambda_{n})^{k}\Big)(t-y)^{-k} dy h^{\prime \prime}(t) dt\\ &=C(k) \int_{0}^{x}y^{k}R^{\lambda,k}_{y} \int_{y}^{x} (t-y)^{-k} h^{\prime \prime}(t) dt dy,
\end{align*}
where
\begin{align*}
h^{\prime \prime}(t)
=
(u+\varepsilon)^{2}e^{-(u+\varepsilon)t} (x-t)^{k}
&
+2k(u+\varepsilon)e^{-(u+\varepsilon)t}(x-t)^{k-1}\\ &+k(k-1)(e^{-(u+\varepsilon)t}-e^{-(u+\varepsilon)x})(x-t)^{k-2}.
\end{align*}
Using $(e^{-(u+\varepsilon)t}-e^{-(u+\varepsilon)x})\le e^{-(u+\varepsilon)t}(u+\varepsilon)(x-t)$ for the third summand we estimate
\begin{equation} \label{bound}
|h^{\prime \prime}(t)|\le C_{1}(u,k, \varepsilon)e^{-(u+\varepsilon)t}\Big((x-t)^{k}+(x-t)^{k-1}\Big).
\end{equation}
Then we deduce from  \eqref{gammasubst} and  \eqref{gammasubst2} that for all $y>0$
\begin{align*}
|B|
&
\le C_{2}  \int_{0}^{x}y^{k} |R_{y}^{\lambda,k}| \int_{y}^{x} e^{-(u+\varepsilon)t} \left(x^{k}+(x-t)^{k-1}\right)(t-y)^{-k} dt dy \\ &\le C_{3}  \int_{0}^{x} y^{k} |R_{y}^{\lambda,k}| \left( x^{k} e^{-(u+\varepsilon)y} + e^{-(u+\varepsilon)y}\int_{y}^{x}(x-t)^{k-1}(t-y)^{-k} dt \right) dy\\ &\le C_{4}\int_{0}^{x}y^{k} |R_{y}^{\lambda,k}| e^{-(u+\varepsilon)y}\left(x^{k}+1)\right) dy \le C_{4} x^{k} \Delta  \int_{0}^{x} y^{k} e^{-\varepsilon y} (1+x^{-k}) dy \\& \le C_{4}x^{k} \Delta \left( \int_{0}^{x} y^{k} e^{-\varepsilon y} dy + \int_{0}^{x} e^{-\varepsilon y} dy \right) \le C_{5} x^{k} \Delta.
\end{align*}
Hence finally
\begin{equation*}
\Big|\sum_{\lambda_{n}<x} a_{n}e^{-(u+\varepsilon)\lambda_{n}}\Big(1-\frac{\lambda_{n}}{x}\Big)^{k}\Big|\le x^{-k}(|A|+|B|)\le C_{6}  \sup_{0<y\leq x} e^{-yu} |R_{y}^{\lambda,k}|.
\end{equation*}
Note, that the case $k=1$ follows the same lines with the difference, that we do not use (\ref{motor}) and estimate $|B|$ directly.
\end{proof}

\begin{proof}[Proof of Theorem \ref{maxitranslatedtypeI}] Fix $k,u >0$, and assume first that
 $0<k\le 1$. Then by Lemma \ref{Abeltype1} it suffices to prove that for all   $f \in H_p^\lambda(G)$
\begin{equation} \label{meet}
\Big\|\sup_{x>0} |e^{-ux}R_{x}^{\lambda,k}(f)(\cdot)|\Big\|_{p}\le C(k)\|f\|_{p}.
\end{equation}
Let first $f=\sum_{n=1}^{N} a_{n} h_{\lambda_{n}}$ be a polynomial. Then applying \cite[Lemma 3.6]{Schoolmann1}
(with $\varepsilon=u$ and $D^{\omega}=\sum_{n=1}^{N} a_{n}h_{\lambda_n}(\omega) h_{\lambda_{n}}$, or using again \eqref{genius} straight away) we obtain for all $x>0$ and $\omega \in G$
\begin{equation} \label{balkon}
e^{-u x} R_{x}^{\lambda,k}(f)(\omega)=\frac{\Gamma(k+1)}{2\pi} \mathcal{F}_{L_{1}(\R)} \Big(\frac{f_{\omega}*P_{u}}{(u+i\cdot)^{1+k}}\Big)(-x).
\end{equation}
Like in the proof of Lemma \ref{lemma2type1} the continuity of $\mathcal{A}$ from (\ref{operatorA}) as well as  the continuity of  the Fourier transform $\mathcal{F}_{L_{1}(\R)}$ imply that \eqref{balkon} holds
for every
$f\in H_{1}^{\lambda}(G)$, all $x>0$, and  almost all $\omega \in G$. Hence for such $f,x$ and $\omega$
\begin{align*}
&
\left(\frac{\Gamma(k+1)}{2\pi} \right)^{-1}\Big|e^{-u x}  R_{x}^{\lambda,k}(f)(\omega)\Big|
=\Big|\mathcal{F}_{L_{1}(\R)} \Big(\frac{f_{\omega}*P_{u}}{(u+i\cdot)^{1+k}}\Big)(-x)\Big|
\\&
=\Big|\int_{\R} \frac{f_{\omega}*P_{u}(t)}{(u+it)^{1+k}}e^{itx} dt\Big| \le  \int_{\R} |f_{\omega}(a)| \left\|\frac{P_{u}(\cdot-a)}{(u+i\cdot)^{1+k}}\right\|_{L_{1}(\R)} \le 2\int_{\R} \frac{|f_{\omega}(a)|}{|u+ia|^{1+k}} da,
\end{align*} where we used Lemma \ref{lemma1type1} for the last inequality. Now integration over $G$ and the Minkowski inequality give for all $f \in H_p^\lambda(G)$
\begin{equation*} \label{computer}
\left(\int_{G} \sup_{x>0} |e^{-ux}R_{x}^{\lambda,k}(f)(\omega)|^{p} d\omega \right)^{\frac{1}{p}}\le \frac{1}{\pi} \frac{\Gamma(k+1)}{k}\|f\|_{p},
\end{equation*}
which under the restriction that  $0<k \leq 1$ is what we aimed at in \eqref{meet}.
If $k>1$, then we write $k=l+k^{\prime}$, where $l\in \N$ and $0<k^{\prime}\le 1$. Replacing $f$ by $f*p_{u}$ in \eqref{gammaidentity} we conclude that
\begin{equation*}
|R_{x}^{\lambda,k}(f*p_{u})(\omega)|\le \sup_{0<y<x} |R_{y}^{\lambda,k^{\prime}}(f*p_{u})(\omega)|,
\end{equation*}
which proves the claim for all $k>0$.
\end{proof}

\begin{proof}[Proof of Theorem \ref{maxitransinternaltype1}]
Combine (\ref{besiconorm2}) from Corollary \ref{differentiation} with Theorem \ref{maxitranslatedtypeI}.
\end{proof}

Using the next lemma, Theorem \ref{maxitranslatedtypeII} follows from Theorem \ref{maxitranslatedtypeI}.

\begin{Lemm} \label{lemmasecond} Let $0<k\le 1$ and $u>0$. Then for every $\varepsilon>0$ there is a constant $C=C(k,u,\varepsilon)$ such that for all $x>0$ and complex sequences $(a_{n})$ we have
$$\Big|\sum_{\lambda_{n}<x} a_{n}e^{-(u+\varepsilon)\lambda_{n}}(1-e^{\lambda_{n}-x})^{k} \Big| \le C
\sup_{0<y\leq x} \Big| e^{-uy}\sum_{\lambda_{n}<y} a_{n}\Big(1-\frac{\lambda_{n}}{y}\Big)^{k} \Big|.$$
\end{Lemm}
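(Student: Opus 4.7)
My plan is to follow the proof of Lemma~\ref{Abeltype1} closely, choosing the auxiliary comparison function so that the \emph{first} Riesz mean $R^{\lambda,k}_x$---which is controlled by the right-hand side---is extracted as the boundary-type term, even though the original kernel $(1-e^{\lambda_n-x})^k$ is of second-Riesz type. Concretely, I would set $g(t) := e^{-(u+\varepsilon)t}(1-e^{t-x})^k$, introduce the comparison function $\phi(t) := e^{-(u+\varepsilon)x}(x-t)^k$, and work with $h(t) := g(t) - \phi(t)$ on $[0,x]$. A short Taylor expansion $1-e^{t-x} = (x-t) - (x-t)^2/2 + O((x-t)^3)$ shows that $g$ and $\phi$ agree to first order near $t = x$, so $h(x) = h'(x) = 0$, and---crucially---the leading $k(k-1)e^{-(u+\varepsilon)x}(x-t)^{k-2}$ singularities of $g''$ and $\phi''$ cancel exactly in $h'' = g'' - \phi''$, leaving only an integrable remainder $h''(t) = O((x-t)^{k-1})$ near $t = x$.

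Abel summation then gives, with $S(t) := \sum_{\lambda_n<t}a_n$,
\[
\sum_{\lambda_n<x} a_n e^{-(u+\varepsilon)\lambda_n}(1-e^{\lambda_n-x})^k = -\int_0^x S(t) g'(t)\,dt = A + B,
\]
where $A := -\int_0^x S(t)\phi'(t)dt = e^{-(u+\varepsilon)x}x^k R^{\lambda,k}_x$ by the identity \eqref{idfirst}, and $B := -\int_0^x S(t) h'(t)dt$. Setting $\Delta := \sup_{0<y\le x}|e^{-uy}R^{\lambda,k}_y|$, the estimate $|A| \le x^k e^{-\varepsilon x}\Delta \le C(k,\varepsilon)\Delta$ follows at once from $|R^{\lambda,k}_x|\le e^{ux}\Delta$, exactly as in the proof of Lemma~\ref{Abeltype1}. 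For $B$, a second integration by parts against $G(t) := \sum_{\lambda_n<t}a_n(t-\lambda_n)$ has vanishing boundary contributions (thanks to $G(0)=0$ and $h'(x)=0$) and yields $B = \int_0^x G(t) h''(t)dt$; substituting the Hardy--Riesz representation \eqref{motor}, applying Fubini, and using $|R^{\lambda,k}_y|\le e^{uy}\Delta$ reduces matters to checking that
\[
\int_0^x t\, e^{-\varepsilon t}\bigl[1 + (1-e^{t-x})^{k-1}\bigr]dt
\]
is bounded uniformly in $x$, which follows by splitting the range at $t = x-1$ and using the integrability of $s^{k-1}$ near $s=0$ together with the elementary bound $\sup_{r\ge0} r e^{-\varepsilon r} = 1/(\varepsilon e)$.

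The main step that will require care is the cancellation of the non-integrable $(x-t)^{k-2}$ singularity in $h''(t)$: it is precisely this cancellation that dictates the specific choice $\phi(t) = e^{-(u+\varepsilon)x}(x-t)^k$ in preference to the more literal analogue $\phi(t) = e^{-(u+\varepsilon)x}(1-e^{t-x})^k$, which would still give $h(x) = h'(x) = 0$ but would leave in $A$ a surviving second-Riesz-mean contribution $e^{-(u+\varepsilon)x} S^{\lambda,k}_x(\sum a_n)$ that is not directly controlled by $\Delta$. Once this cancellation and the ensuing uniform boundedness of $\int_0^x |h''(t)| t e^{ut}dt$ are verified, combining the bounds on $A$ and $B$ finishes the proof, the overall structure being an exact parallel of that of Lemma~\ref{Abeltype1}.
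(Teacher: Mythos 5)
Your argument is correct, and it reaches the stated inequality by a genuinely different decomposition from the paper's. The paper first invokes Lemma~\ref{Abeltype1} to replace the right-hand side $\sup_{0<y\le x}|e^{-uy}R^{\lambda,k}_y|$ by $\sup_{0<y\le x}|R^{\lambda,k}_y(u)|$ (weight moved inside the sum, see \eqref{sunshine}), then passes to the exponential variable $l=e^{t}$ via \eqref{idsecond}, takes $h(l)=(l^{-\varepsilon}-e^{-\varepsilon x})(e^{x}-l)^{k}$ as the comparison function, and is left with an $A$-term carrying the exponential kernel $(e^{x}-e^{t})^{k-1}e^{t}$; taming that term costs the second mean value theorem together with Lemma~7 of \cite{HardyRiesz}. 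You instead stay in the variable $t$ and subtract the full first-Riesz kernel $\phi(t)=e^{-(u+\varepsilon)x}(x-t)^{k}$, so that your $A$ is literally $e^{-(u+\varepsilon)x}x^{k}R^{\lambda,k}_x$ by \eqref{idfirst} and is bounded by $x^{k}e^{-\varepsilon x}\Delta$ with no further machinery; the price is paid in $B$, where one must verify that the $k(k-1)(x-t)^{k-2}$ singularities of $g''$ and $\phi''$ cancel. Your Taylor expansion does establish this: writing $s=x-t$, one has $h(t)=e^{-(u+\varepsilon)x}\big(c_{1}s^{k+1}+O(s^{k+2})\big)$, hence $h'(x^-)=0$ and $h''(t)=O(s^{k-1})$ near $t=x$, and a full computation of $h''$ gives $|h''(t)|\le C(k,u,\varepsilon)\,e^{-(u+\varepsilon)t}\big[1+(1-e^{t-x})^{k-1}\big]$, so that after \eqref{motor} and Fubini the estimate indeed reduces to the uniform boundedness of your displayed integral. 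Your route buys a direct proof of the stated inequality (no detour through Lemma~\ref{Abeltype1}) and avoids the second mean value theorem; the paper's route keeps all kernel manipulations inside Hardy--Riesz's classical toolbox. Two points should be made explicit in a write-up: the case $k=1$ must be treated without \eqref{motor} (there $G(t)=tR^{\lambda,1}_t$ directly, so $|B|\le\Delta\int_{0}^{x}te^{ut}|h''(t)|\,dt$), exactly as the paper remarks for its own argument; and the vanishing of the boundary term $G(x)h'(x)$ in the second integration by parts deserves a sentence, since $g'$ and $\phi'$ individually blow up like $(x-t)^{k-1}$ and only their difference tends to $0$ --- it is the same tangency at $t=x$ that makes $h''$ integrable.
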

We follow a similar strategy as in the previous proof of Lemma \ref{Abeltype1}, use the following identity from \cite[\S IV.2, p. 21]{HardyRiesz}
\begin{equation} \label{idsecond}
\sum_{\lambda_{n}<x} a_{n}\Big(1-e^{\lambda_{n}-x}\Big)^{k}=ke^{-xk}  \int_{1}^{e^{x}} \Big( \sum_{e^{\lambda_{n}}<t} a_{n}\Big)(e^{x}-t)^{k-1} dt\,,
\end{equation}
and also some ideas from \cite[Proof of Theorem 20, p. 33]{HardyRiesz}.
\begin{proof}[Proof of Lemma \ref{lemmasecond}]
  Let $u,\varepsilon>0$. By Lemma \ref{Abeltype1} it suffices to prove
 \begin{equation} \label{sunshine}
 \Big|\sum_{\lambda_{n}<x} a_{n}e^{-(u+\varepsilon)\lambda_{n}}(1-e^{\lambda_{n}-x})^{k} \Big| \le
 C \sup_{0<y\leq x} \Big|\sum_{\lambda_{n}<y} a_{n}e^{-u\lambda_{n}} \Big(1-\frac{\lambda_{n}}{y}\Big)^{k} \Big|.
 \end{equation}
 Moreover, let us for simplicity write
 \begin{equation*}
 R_{y}^{\lambda,k}(u)=\sum_{\lambda_{n}<y} a_{n}e^{-u\lambda_{n}} \left(1-\frac{\lambda_{n}}{y}\right)^{k}.
 \end{equation*}
 We use the following identity (see again the beginning of the proof of Lemma \ref{Abeltype1})
\begin{align*}
\sum_{\lambda_{n}<x} a_{n}e^{-(u+\varepsilon)\lambda_{n}}(e^{x}-e^{\lambda_{n}})^{k}
&
=-\int_{1}^{e^{x}}\Big(\sum_{\lambda_{n}<\log(l)} a_{n}e^{-u\lambda_{n}} \Big) \frac{d}{dl} \big(l^{-\varepsilon} (e^{x}-l)^{k} \big) dl  \\ & = ke^{-\varepsilon x} \int_{1}^{e^{x}} \Big(\sum_{\lambda_{n}<\log(l)} a_{n}e^{-u\lambda_{n}} \Big) (e^{x}-l)^{k-1} dl\\ &-\int_{1}^{e^{x}} \Big(\sum_{\lambda_{n}<\log(l)} a_{n}e^{-u\lambda_{n}} \Big)  h'(l) dl=: A+B\,,
\end{align*}
where $h(l):=(l^{-\varepsilon}-e^{-\varepsilon
x})(e^{x}-l)^{k}$.
Let us first deal with the summand A. By substitution with $t=\log(l)$ we obtain
\begin{align*}
A=ke^{-\varepsilon x} \int_{0}^{x} \Big(\sum_{\lambda_{n}<t} a_{n}e^{-u\lambda_{n}} \Big)  (e^{x}-e^{t})^{k-1} e^{t} dt.
\end{align*}
Since the positive function $G(t):=\left(\frac{e^{x}-e^{t}}{x-t}\right)^{k-1}e^{t}$, where $0<t<x$, is increasing with  $\lim_{r\to x} G(r)=e^{xk}$, by the second mean value theorem (applied separately to the real and imaginary part) there are $0<\xi_{1},\xi_{2}<x$ such that
\begin{align*}
A=
&
ke^{xk}e^{-\varepsilon x}\int_{\xi_{1}}^{x} \Big(\sum_{\lambda_{n}<t} Re(a_{n})e^{-u\lambda_{n}} \Big) (x-t)^{k-1}dt\\ &+ ike^{xk}e^{-\varepsilon x}\int_{\xi_{2}}^{x}\Big(\sum_{\lambda_{n}<t} Im(a_{n})e^{-u\lambda_{n}} \Big) (x-t)^{k-1}dt ,
\end{align*}
and so by \cite[Lemma 7, p. 28]{HardyRiesz}
\begin{align*}
|A|&\le 2ke^{-\varepsilon x}e^{xk} \max_{j=1,2}\left|\int_{\xi_{j}}^{x} \Big(\sum_{\lambda_{n}<t} a_{n}e^{-u\lambda_{n}} \Big) (x-t)^{k-1}dt \right|  \\ &\le C e^{xk} e^{-\varepsilon x} \sup_{0<y\leq x} |y^{k}R_{y}^{\lambda,k}(u)| \le C_{1} e^{xk} \sup_{0<y\leq x} |R_{y}^{\lambda,k}(u)|.
\end{align*}
Now we consider the second summand $B$, and define  $g(t):=h^{\prime}(e^{t})e^{t}$, where $0<t<x$. Then  the substitution  $t=\log(l)$ and partial integration (use again \eqref{capri}) give
\begin{align*}
B&=-\int_{1}^{e^{x}} \Big(\sum_{\lambda_{n}<\log(l)} a_{n}e^{-u\lambda_{n}} \Big) h^{\prime}(l) dl
\\ &
=-\int_{0}^{x} \Big(\sum_{\lambda_{n}<t} a_{n}e^{-u\lambda_{n}} \Big)h^{\prime}(e^{t})e^{t} dt=\int_{0}^{x}\Big(\sum_{\lambda_{n}<t} a_{n}e^{-u\lambda_{n}}(t-\lambda_{n})\Big)g^{\prime}(t) dt.
\end{align*}
Let now $0<k<1$. Then using  \eqref{motor} and Fubini's theorem  we finally end up with
\begin{align*}
B= C(k)\int_{0}^{x} y^{k} R_{y}^{\lambda,k}(u)\int_{y}^{x} g^{\prime}(t)(t-y)^{-k} dt dy,
\end{align*}
where
\begin{align*}
g^{\prime}(t)=
&
(e^{x}-e^{t})^{k} \varepsilon^{2} e^{-\varepsilon t} \\ &+ (e^{x}-e^{t})^{k-1} \varepsilon \left( (k-1)e^{-\varepsilon t}+ e^{-\varepsilon t}e^{t} -ke^{t}(e^{-\varepsilon t}-e^{-\varepsilon x})+k e^{-\varepsilon t} e^{t} \right) \\ &+ (e^{x}-e^{t})^{k-2} k(k-1)e^{t} (e^{-\varepsilon t}-e^{-\varepsilon x}).
\end{align*}
Estimating straight forward there is a constant $C_{2}= C_2(k, \varepsilon)$ such that
\begin{align*}
C_{2}^{-1}|g^{\prime}(t)|&\le (x-t)^{k} e^{xk}e^{-\varepsilon t} \\ &~~+ (x-t)^{k-1} e^{x(k-1)} \big( e^{-\varepsilon t} +e^{-\varepsilon t}e^{t}+e^{t}e^{-\varepsilon t} (x-t)+e^{- \varepsilon t} e^{t} \big)  \\&~~+ (x-t)^{k-2} e^{x(k-2)} e^{t} e^{-\varepsilon t} (x-t) \\ &\le 2(x-t)^{k}e^{xk} e^{-\varepsilon t}+ 4(x-t)^{k-1}e^{xk}e^{-\varepsilon t} \\ &= e^{xk}e^{-\varepsilon t}\big( 2(x-t)^{k}+4(x-t)^{k-1}\big ) .
\end{align*}
Hence, following the estimates from the end of the proof of Lemma \ref{Abeltype1} (compare the bound for $|g^{\prime}(t)|$ with the bound for $|h^{\prime \prime}(t)|$ from (\ref{bound}))  we conclude that
\begin{equation*}
e^{-xk}|B| \le C_{3} \sup_{0<y<x} |R_{y}^{\lambda,k}(u)|.
\end{equation*}
Finally (\ref{sunshine}) follows, since
\begin{equation*}
\Big|\sum_{\lambda_{n}<x} a_{n}e^{-(u+\varepsilon)\lambda_{n}}(1-e^{\lambda_{n}-x})^{k} \Big| \le e^{-xk} (|A|+|B|)\le  C_{4} \sup_{0<y<x} |R_{y}^{\lambda,k}(u)|.
\end{equation*}
Note that the case $k=1$ again follows the same lines without using (\ref{motor}).
\end{proof}

\begin{proof}[Proof of Theorem  \ref{maxitranslatedtypeII}]  Lemma \ref{lemmasecond} and (\ref{meet}) assure that for $0<k\le 1$ we have
\begin{equation} \label{regen}
\left(\int_{G} \sup_{x>0} |S_{x}^{\lambda,k}(f*p_{u})(\omega)|^{p} d\omega \right)^{\frac{1}{p}} \le C(u,k) \|f\|_{p}.
\end{equation}
 So let $k>1$ and write $k=l+k^{\prime}$, where $l\in \mathbb{N}$ and $0<k^{\prime}\le 1$. It suffices to show that \begin{align*}
|S^{\lambda,k}_{x}(f*p_{u})(\omega)|
\le  \sup_{0<y<x} |S_{y}^{\lambda,k^{\prime}}(f*p_{u})(\omega)|\,.
\end{align*}
Indeed, by definition and \eqref{gammaidentity} we have
\begin{equation*}
|S^{\lambda,k}_{x}(f)(\omega)|=|R_{e^x}^{e^\lambda,k}(f)(\omega)| \leq \sup_{0<t<e^x} |R_{t}^{e^\lambda,k'}(f)(\omega)|
= \sup_{0<y<x} |S_{y}^{\lambda,k^{\prime}}(f*p_{u})(\omega)|\,. \qedhere
\end{equation*}
\end{proof}

\subsection{Proof of the Theorems  \ref{fejer} and  \ref{inftycaseTheo}, and Corollary \ref{shiftcontinuous}} \label{fast}
The following observation is an important tool of both proofs.
\begin{Lemm} \label{measures}
 Let $\lambda$ be a frequency, $k>0$ and $(G,\beta)$ a $\lambda$-Dirichlet group.
Then there is a constant $C >0$ such that for all $x>0 $ there is a measure $\mu_x \in M(G)$ with  $\|\mu_x\|\le C$ and such that for all $n \in \N$
$$\widehat{\mu_x}(h_{\lambda_{n}})=\begin{cases} \Big(1-\frac{\lambda_{n}}{x}\Big)^{k}&, \text{if} ~~\lambda_{n}< x,\\
0&, \text{if} ~~\lambda_{n}\ge x. \end{cases}$$
\end{Lemm}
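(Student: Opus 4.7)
The natural approach is to realize $\mu_x$ as the pushforward under $\beta$ of a suitable absolutely continuous finite measure $K_x(t)\,dt$ on $\R$. The target Fourier profile on the real line is the one-dimensional Bochner--Riesz function
\[
\phi_x(y) := (1 - |y|/x)_+^k, \qquad y \in \R,
\]
which is continuous, even, and compactly supported in $[-x,x]$. Evaluated at any $\lambda_n \ge 0$ it gives exactly the desired Riesz weight.

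First I would define
\[
K_x(t) := \frac{1}{2\pi} \int_\R \phi_x(y) e^{ity}\,dy
\]
and show $K_x \in L^1(\R)$ with $\|K_x\|_1$ independent of $x$. The scaling relation $\phi_x(y) = \phi_1(y/x)$ yields $K_x(t) = x K_1(xt)$, hence $\|K_x\|_1 = \|K_1\|_1$, so it suffices to verify $K_1 \in L^1(\R)$. That $K_1$ is bounded and continuous near the origin is clear since $\phi_1 \in L^1(\R)$. For $|t|$ large, integration by parts combined with the fact that $\phi_1$ has only H\"older-type singularities of order $k$ at the points $\pm 1$ yields the classical Bochner--Riesz decay $|K_1(t)| = O(|t|^{-1-k})$, which is integrable precisely because $k>0$. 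Set $C := \|K_1\|_{L^1(\R)}$.

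Next I would push forward: the finite complex Borel measure $K_x(t)\,dt$ on $\R$ transports under the continuous map $\beta\colon \R \to G$ to a regular Borel measure $\mu_x \in M(G)$ satisfying $\|\mu_x\|_{M(G)} \le \|K_x\|_{L^1(\R)} = C$. For the Fourier coefficients, using $h_{\lambda_n}\circ \beta = e^{-i\lambda_n\cdot}$ and the change-of-variables formula for pushforwards,
\[
\widehat{\mu_x}(h_{\lambda_n}) = \int_G \overline{h_{\lambda_n}(\omega)}\, d\mu_x(\omega) = \int_\R e^{i\lambda_n t} K_x(t)\,dt = \phi_x(\lambda_n),
\]
where the last equality is the Fourier inversion formula, applicable since both $\phi_x$ and $K_x$ lie in $L^1(\R)$ and $\phi_x$ is continuous. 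Because $\lambda_n \ge 0$, this evaluates to $(1-\lambda_n/x)^k$ when $\lambda_n < x$ and to $0$ otherwise, as required.

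The only genuine technical obstacle is the uniform $L^1$-bound on the one-dimensional Bochner--Riesz kernel $K_1$; once this classical estimate is established (e.g.\ by the integration-by-parts argument above, or by the explicit Bessel-function representation of $K_1$), all remaining steps are formal. In the integer case $k \in \N$ one may alternatively observe that $K_x$ is an explicit non-negative Fej\'er-type kernel (for $k=1$ it is literally the Fej\'er kernel, with $\|K_x\|_1 = 1$), making integrability transparent and the constant $C$ concretely computable via $\|K_x\|_1 = \widehat{K_x}(0) = \phi_x(0) = 1$.
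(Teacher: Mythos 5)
Your proof is correct, but it takes a genuinely different route from the paper. The paper obtains $\mu_x$ abstractly: it invokes the case $p=\infty$ of Theorem \ref{maximalineqtypeI} to see that $f\mapsto \sum_{\lambda_n<x}\widehat{f}(h_{\lambda_n})(1-\lambda_n/x)^k$ is a functional of norm $\le C$ on $C^{\lambda}(G)$, extends it by Hahn--Banach to $C(G)$, and then applies the Riesz representation theorem; the measure is thus non-constructive and its existence is parasitic on the (hard) maximal inequality. You instead build $\mu_x$ explicitly as the pushforward under $\beta$ of the one-dimensional Bochner--Riesz kernel $K_x(t)\,dt$ with $\widehat{K_x}=(1-|\cdot|/x)_+^k$, exactly as the paper itself constructs the Poisson measure $p_u$ from $P_u$; the scaling $K_x(t)=xK_1(xt)$ and the classical bound $K_1\in L^1(\R)$ for $k>0$ give a uniform norm bound, and Fourier inversion plus $h_{\lambda_n}\circ\beta=e^{-i\lambda_n\cdot}$ give the required coefficients (your construction in fact prescribes $\widehat{\mu_x}$ at \emph{every} character of $G$, not only at the $h_{\lambda_n}$). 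This buys independence from Theorem \ref{maximalineqtypeI}, an explicit and in the case $k=1$ even nonnegative measure with sharp constant $C=\|K_1\|_1$ (equal to $1$ for $k=1$, by the Fej\'er kernel computation), and it could in principle be turned around to reprove the $L_\infty$-bound the paper relies on. Two minor points: the stated decay $|K_1(t)|=O(|t|^{-1-k})$ is the correct rate only for $0<k\le 1$; for $k>1$ the corner of $(1-|y|)^k$ at $y=0$ caps the decay at $O(|t|^{-2})$, which is still integrable, so the conclusion is unaffected. And to avoid regularity issues with pushforwards onto a possibly non-metrizable $G$, it is cleanest to define $\mu_x$ as the bounded functional $f\mapsto\int_\R f(\beta(t))K_x(t)\,dt$ on $C(G)$ and apply Riesz representation, which also immediately yields $\|\mu_x\|\le\|K_x\|_{L_1(\R)}$.
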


\begin{proof}[Proof of Lemma \ref{measures}]
The case $p=\infty$ of Theorem \ref{maximalineqtypeI} implies that there is a constant $C>0$ such that  for all $x > 0$ and all $f \in H_\infty^\lambda(G)$
$$\Big|\sum_{\lambda_{n}<x} \widehat{f}(h_{\lambda_{n}}) \Big(1-\frac{\lambda_{n}}{x}\Big)^{k}\Big|\le\|R_{x}^{\lambda,k}(f)\|_{\infty}\le C\|f\|_{\infty}.$$
Denote the subspace of all continuous functions in $H_\infty^\lambda(G)$ by   $C^{\lambda}(G)$, and fix some $x > 0$.  Then the bounded functional
$$T_x\colon C^{\lambda}(G) \to \C, ~~ f \mapsto \sum_{\lambda_{n}<x} \widehat{f}(h_{\lambda_{n}}) \Big(1-\frac{\lambda_{n}}{x}\Big)^{k}$$
has  norm $\leq C$, and satisfies   $T_x(h_{\lambda_{n}})=\big(1-\frac{\lambda_{n}}{x}\big)^{k}$ for  $\lambda_n < x$
and $T_x(h_{\lambda_{n}})=0$ for   $\lambda_n \ge  x$.
 By the  Hahn-Banach theorem there is $\widetilde{T_x} \in (C(G))^{\prime}$ extending  $T_x$ with equal norm, and
 then also the linear operator
 $$R_x\colon C^{\lambda}(G) \to \C, ~~
 R_x(f):=\overline{\widetilde{T_x}\Big(\overline{f}\Big)}
 $$
 has norm $\leq C$, and satisfies $R_x(\overline{h_{\lambda_n}}) =
 \big(1-\frac{\lambda_{n}}{x}\big)^{k}$ for  $\lambda_n < x$
and $R_x(\overline{h_{\lambda_n}})=0$ for   $\lambda_n \ge  x$.
Hence  the Riesz representation theorem assures the existence of a  measure
   $\mu_x$  with norm $\leq C$ which, since $\widehat{\mu_x}(h_{\lambda_{n}}) = R_x(\overline{h_{\lambda_n}})$
   for all $n$, has
   the desired Fourier coefficients.
\end{proof}

\begin{proof}[Proof of Theorem \ref{fejer}]Note that for any polynomial $P \in H_1^\lambda(G)$ we have
$$\lim_{x\to \infty} P*\mu_x=P \,\,\, \text{in $H_1^\lambda(G)$}\,,$$
 where $\mu_x$ is the measure from Lemma \ref{measures}. Now, given  $f \in H_{1}^{\lambda}(G)$ and
 $\varepsilon >0$, choose by density a polynomial $P$ such that $\|f-P\|_{1}\le \varepsilon$. Then for large $x$
 (and  the constant $C$ from Lemma \ref{measures})
\begin{align*}
&
\|f-f*\mu_x\|_{1}
\\ &
\le \|f-P\|_{1}+\|P-P*\mu_x\|_{1}+\|(P-f)*\mu_x\|_{1}\le \varepsilon\big(2+C\big)\,. \qedhere
\end{align*}
\end{proof}
Observe, that the counterexamples of Remark \ref{approxsecondmeansnorm} show that the variant of Lemma \ref{measures} for second Riesz means does not hold in the sense that there are no measures $\mu_{x}\in M(G)$, $x>0$, $\|\mu_{x}\|\le C$ for some $C>0$, such that
$$\widehat{\mu_x}(h_{\lambda_{n}})=\begin{cases} \Big(1-e^{\lambda_{n}-x}\Big)^{k}&, \text{if} ~~\lambda_{n}< x,\\
0&, \text{if} ~~\lambda_{n}\ge x. \end{cases}$$

In the proof of Theorem \ref{inftycaseTheo} we take advantage  of Lemma \ref{measures} and combine it with an  estimate for the abscissa of uniform summability by  Riesz means. Given $D=\sum a_{n}e^{-\lambda_{n}s}$ and $k\ge 0$, we define $\sigma_{u}^{\lambda,k}(D)$ to be the infimum of all $\sigma \in \R$ such that $D$ is uniformly $(\lambda, k)$-summable on $[Re>\sigma]$, i.e. the limit
$$\lim_{x\to \infty} \sum_{\lambda_{n}<x} a_{n} \Big(1-\frac{\lambda_{n}}{x}\Big)^{k} e^{-\lambda_{n}s}$$
exists uniformly on $[Re>\sigma]$.
We are going to make use of the following Bohr-Cahen type formula proved in \cite{Schoolmann1}:
\begin{equation} \label{BohrCahen}
\sigma_{u}^{\lambda,k}(D)\le \limsup_{x\to \infty} \frac{\log\Big( \|R_{x}^{\lambda,k}(D)\|_{\infty} \Big)}{x}\,,
\end{equation}
with equality whenever $\sigma_{u}^{\lambda,k}(D) \ge 0$.

\begin{proof}[Proof of Theorem \ref{inftycaseTheo}] Let us start defining a contractive coefficient preserving
mapping
$$\Psi \colon \mathcal{H}_{\infty}^{\lambda}[Re>0] \to H_{\infty}^{\lambda}(G)\,.$$
Take $F \in \mathcal{H}_{\infty}^{\lambda}[Re>0]$. Then (as described in Section \ref{aap}), given  $\sigma>0$, the uniformly almost periodic function $F_{\sigma} = F(\sigma + i \cdot)$  is a uniform limits of polynomials of the form $P^{N}_{\sigma}(t)=\sum_n b_{n}^Ne^{-\lambda_{n}it}$. Hence by density of $\beta: \R \to G$, the polynomials
$p_{\sigma}^{N}:=\sum_n b_{n}^Nh_{\lambda_{n}}$ form a Cauchy sequence in $H_{\infty}^{\lambda}(G)$ with limit, say, $f_{\sigma}$ with $\|f_{\sigma}\|_{\infty}=\|F_{\sigma}\|_{\infty}\le \|F\|_{\infty}$. Then by a standard   weak compactness argument there is $\Psi(h): =f \in H_{\infty}^{\lambda}(G) \subset L_\infty(G)$
with $\|f\|_{\infty}\le \|F\|_{\infty}$, which is the weak star limit of some subsequence of $(f_{\frac{1}{n}})_{n}$
(use that the unit ball of $L_\infty(G)$, being the dual  of  $L_1(G)$, endowed with its weak star topology is
compact and  metrizable). Then a simple argument shows that  $a_{\lambda_n}(F)=\widehat{f}(h_{\lambda_n})$ for all $n$, i.e.
$\Psi$ is indeed an  coefficient preserving  contraction.\\
In order to show that $\Psi$ is  in fact  an isometry onto, take  $f \in H_{\infty}^{\lambda}(G)$. Using the measures $\mu_x$ from Lemma \ref{measures}  and the fact that $\beta$ has dense range,  we for all $x>0$
have
\begin{align} \label{bernal}
\begin{split}
&\sup_{t \in \R} \bigg|\sum_{\lambda_{n}<{x}} \widehat{f}(h_{\lambda_{n}}) \Big(1-\frac{\lambda_{n}}{x}\Big)e^{-\lambda_{n}it}  \bigg|
\\ &=\left\| \sum_{\lambda_{n}<{x}} \widehat{f}(h_{\lambda_{n}}) \Big(1-\frac{\lambda_{n}}{x}\Big) h_{\lambda_{n}} \right\|_{\infty}=\|f*\mu_x\|_{\infty} \le C \|f\|_{\infty}.
\end{split}
\end{align}
Hence \eqref{BohrCahen} applied to $D:=\sum \widehat{f}(h_{\lambda_{n}})  e^{-\lambda_{n}s}$ shows that
$\sigma_u^{\lambda,1}(D)\leq 0$, and this  in particular proves   that
$$F(s):= \lim_{x\to \infty} \sum_{\lambda_{n}<x} \widehat{f}(h_{\lambda_{n}}) \Big(1-\frac{\lambda_{n}}{x}\Big)e^{-\lambda_{n}s} \colon [Re>0] \to \C$$
defines a holomorphic function on $[Re>0]$ which converges uniformly on every smaller half-space $[Re>\sigma], \sigma>0$.
As explained in Section  \ref{aap} we may deduce  that all functions
$F_\sigma = F (\sigma + i \cdot), \,\sigma >0$ are uniformly almost periodic
with Bohr coefficients $a_{\lambda_n}(F)=\widehat{f}(h_{\lambda_{n}})$ for all $n$ and zero else.
 It remains to show, that $F$ is bounded. By equation \eqref{coro1.3pos} from
Corollary \ref{Dirichletseries} there is some $\omega \in G$, such that for all $\sigma>0$ and  almost all $t\in \R$ we have
\begin{equation} \label{continuity}
f_{\omega}*P_{\sigma}(t)=\lim_{x\to \infty}  \sum_{\lambda_{n}<x} \widehat{f}(h_{\lambda_{n}}) h_{\lambda_n}(\omega) \Big(1-\frac{\lambda_{n}}{x}\Big)e^{-\lambda_{n}(\sigma+it)};
\end{equation}
note that here both sides form continuous functions, and hence the equality in fact holds  for every $t\in \R$.
On the other hand  we deduce from the rotation invariance of the Haar measure  that
\[
\left\| \sum_{\lambda_{n}<{x}} \widehat{f}(h_{\lambda_{n}}) \Big(1-\frac{\lambda_{n}}{x}\Big) h_{\lambda_{n}} \right\|_{\infty}
=
\left\| \sum_{\lambda_{n}<{x}} \widehat{f}(h_{\lambda_{n}}) h_{\lambda_n}(\omega) \Big(1-\frac{\lambda_{n}}{x}\Big) h_{\lambda_{n}} \right\|_{\infty}\,,
\]
and therefore another application of  \eqref{bernal} and \eqref{BohrCahen} shows that the   vertical limits
 $D^{\omega}=\sum \widehat{f}(h_{\lambda_{n}}) h_{\lambda_n}(\omega)e^{-\lambda_{n}s}$  are uniformly summable by first $(\lambda,1)$-Riesz means on all half-planes $[Re>\sigma]$ with $\sigma>0$. All together this  implies
\begin{align*}
\|F\|_{\infty}&=\sup_{\sigma>0} \|F_{\sigma}\|_{\infty}=\sup_{\sigma>0}\lim_{x \to \infty} \left\|\sum_{\lambda_{n}<x} \widehat{f}(h_{\lambda_{n}}) \Big(1-\frac{\lambda_{n}}{x}\Big)e^{-(\sigma+i\cdot)\lambda_{n}} \right\|_{\infty}\\ &=\sup_{\sigma>0}\lim_{x \to \infty} \left\|\sum_{\lambda_{n}<x} \widehat{f}(h_{\lambda_{n}}) h_{\lambda_n}(\omega)\Big(1-\frac{\lambda_{n}}{x}\Big)e^{-(\sigma+i\cdot)\lambda_{n}}\right\|_{\infty}\\ &\le\sup_{\sigma>0} \|f_{\omega}*P_{\sigma}\|_{\infty} \le \|f_{\omega}\|_{\infty}=\|f\|_{\infty}\,,
\end{align*}
 and so  $\Psi$ is indeed an isometry onto.
\end{proof}

\begin{proof}[Proof of Corollary \ref{shiftcontinuous}]
Let $f\in H_{\infty}^{\lambda}(G)$ and $F:=\Psi^{-1}(f)$, where $\Psi$ is the mapping from Theorem \ref{inftycaseTheo}. Then for every $u>0$ the restriction $F_{u}=F(u+ i\cdot)$ is uniformly almost periodic on $\R$. So there is $g_u \in C(G)\cap H_{\infty}^{\lambda}(G)$ such that $F_{u}=g_u\circ \beta$, and $\widehat{g_u}(h_{\lambda_{n}})=a_{\lambda_{n}}(F)e^{-u\lambda_{n}}=\widehat{f}(h_{\lambda_{n}})e^{-u\lambda_{n}}$ for all $n$. Hence $g_u=f*p_{u}$ (compare Fourier coefficients). The second statement follows by approximation with polynomials of the form $\sum_{n=1}^{N} b_{n}h_{\lambda_{n}}$, which are dense in $C(G)\cap H_{\infty}^{\lambda}(G)$ (see \cite[\S 8.7.3]{Rudin62}).
\end{proof}

\end{document}